\documentclass{amsart}
\pdfoutput=1
%
%
%
%


\usepackage[utf8]{inputenc} 

\usepackage[T1]{fontenc}    
\usepackage{ae,aecompl}     
\usepackage{url}
\usepackage[usenames,dvipsnames]{xcolor}

\usepackage{amsmath}
\usepackage{amsfonts}
\usepackage{amssymb}
\usepackage{amsthm} %
\usepackage{mathrsfs} %
\usepackage{enumerate} 
\usepackage{verbatim}	

\usepackage{tikz}
\usepackage{pgfplots}
\usetikzlibrary{calc}
\usetikzlibrary{shapes}


\usetikzlibrary{intersections}
\usetikzlibrary{patterns}



\usepackage{caption}
\captionsetup{width=\textwidth}   

\usepackage[hyperfootnotes]{hyperref}
\usepackage{mdframed} 

\theoremstyle{plain} 
\newtheorem{conject}{Conjecture}
\newtheorem{thm}{Theorem}[section]
\newtheorem{cor}[thm]{Corollary}
\newtheorem{prop}[thm]{Proposition}
\newtheorem{lem}[thm]{Lemma}


\theoremstyle{definition}
\newtheorem{defi}[thm]{Definition}

\newtheorem{remark}[thm]{Remark}
\newtheorem{qu}{Question}
\newtheorem{ex}[thm]{Example}



\newcommand{\h}{\widehat}
\newcommand{\cone}{\operatorname{cone}}
\newcommand{\conv}{\operatorname{conv}}
\newcommand{\acc}{\operatorname{Acc}}


\newcommand{\Phip}{\Phi^{{\scriptscriptstyle +}}}
\newcommand{\Phipprime}{\Phi'^{{\scriptscriptstyle +}}}
\newcommand{\Phipim}{\Phi_{im}^{{\scriptscriptstyle +}}}
\newcommand{\PhipI}{\Phi_I^{{\scriptscriptstyle +}}}

\definecolor{Ccolor}{rgb}{0,0.5,0}
\definecolor{JPcolor}{rgb}{0,0,1}


\author[C. Hohlweg]{Christophe~Hohlweg$^{\diamond}$}
\address[Christophe Hohlweg]{Universit\'e du Qu\'ebec \`a Montr\'eal\\
LaCIM et D\'epartement de Math\'ematiques\\ CP 8888 Succ. Centre-Ville\\
Montr\'eal, Qu\'ebec, H3C 3P8\\ Canada}
\email{hohlweg.christophe@uqam.ca}
\urladdr{http://hohlweg.math.uqam.ca}
\thanks{$^\diamond$supported by NSERC Discovery grant {\em Coxeter groups and related structures}.}

\author[J.-P.~Labb\'e]{Jean-Philippe Labb\'e$^{ \ddagger}$} 
\address[J.-P. Labb\'e]{Einstein Institute of Mathematics, Hebrew University of Jerusalem, Jerusalem 91904, Israel}
\email{labbe@math.huji.ac.il}
\urladdr{http://www.math.huji.ac.il/~labbe}
\thanks{$^\ddagger$supported by the FQRNT post-doctoral fellowship and a post-doctoral ISF grant (805/11).}

\keywords{Root systems, Coxeter groups, biconvex sets, biclosed sets, inversion sets, weak order, limit roots}
\subjclass[2010]{Primary 20F55; Secondary 06B23, O5E15}

\title[On inversion sets and the weak order in Coxeter groups]{On inversion sets and the weak order in Coxeter groups}

\begin{document}

\begin{abstract}
In this article, we investigate the existence of joins in the weak order of an infinite Coxeter group $W$. We give a geometric characterization of the existence of a join for a subset $X$ in $W$  in terms of the inversion sets of its elements and their position relative to the imaginary cone. Finally, we discuss inversion sets of infinite reduced words and the notions of biconvex and biclosed sets of positive roots.   
\end{abstract}

\date{\today}


\maketitle

\newcommand\under{\mathbin{\backslash}}

\section{Introduction}

The Cayley graph of a Coxeter system $(W,S)$ is naturally oriented: we orient an edge $w\to ws$ if $w\in W$ and $s\in S$ such that $\ell(ws)>\ell(w)$. Here $\ell(u)$ denotes the {\em length} of a reduced word for $u\in W$ over the alphabet $S$. The Cayley graph of $(W,S)$ with this orientation is the Hasse diagram of the {\em (right) weak order} $\leq$, see~\cite[Chapter~3]{bjoerner_combinatorics_2005} for definitions and background. 

The weak order encodes a good deal of the combinatorics of reduced words associated to $W$. For instance for  $u,v\in W$,   $u\leq v$ if and only if a reduced word for~ $u$ is a prefix for a reduced word for $v$.  Moreover, Bj\"orner~\cite[Theorem~8]{bjoerner_ordering_1984} shows that the poset $(W,\leq)$ is a complete meet semilattice: for any $A\subseteq W$, there exists an infimum $\bigwedge A\in W$, also called the {\em meet} of $A$, see also~\cite[Theorem 3.2.1]{bjoerner_combinatorics_2005}.  This means, in particular, that any $u,v\in W$ have a common greatest prefix, that is, the unique longest $g=u\wedge v\in W$ for which a reduced word is the prefix of a reduced word for $u$ and of a reduced word for $v$.

In the case of finite Coxeter systems,  the weak order turns out to be a complete ortholattice~\cite[Theorem~8]{bjoerner_ordering_1984}, thanks to the existence of the unique longest element $w_\circ$ in $W$: $u\leq v$ if and only if $uw_\circ\geq vw_\circ$ and the supremum of $A\subseteq W$ exists and is $\bigvee A= (\bigwedge (Aw_\circ))   w_\circ$, also called the {\em join of $A$}.  In particular, for any $u,v\in W$, there exists a unique smallest $g=u\vee v\in W$ with two reduced words, one with prefix a reduced word for $u$ and the other with prefix a reduced word for $v$.  The existence of $w_\circ$ and the fact that the weak order is a lattice play important roles in the study of structures related to $(W,S)$ such as Cambrian lattices and cluster combinatorics~\cite{reading_cambrian_2006,reading_clusters_2007,reading_sortable_2007},  Garside elements in spherical Artin-braid groups, see for instance~\cite{dehornoy_foundations_2015}, or reflection orders, their initial sections and Kazhdan-Lusztig polynomials, see the discussion in~\cite[\S2]{dyer_weak_2011}. 

When trying to generalize this technology to infinite Coxeter systems, the absence of $w_\circ$ and of a join in general is crucially missed. For instance the Coxeter sortable elements and Cambrian fans fail to recover the whole cluster combinatorics~\cite[\S1.2]{reading_sortable_2011}. Examples suggest that we need to adjoin new elements to the weak order of $(W,S)$ in order to generalize properly its combinatorial usage to infinite Coxeter groups. In other words, we need to define a larger family of objects, containing the elements of the infinite Coxeter groups, that would further have greatest common prefixes (meet) and least common multiples (join). This brings us to the following question: is there, for each infinite Coxeter system, a suitable complete ortholattice with the usual weak order as a subposet?

 As a first natural candidate to consider, one could think of the set of finite and infinite reduced words over $S$, modulo finite and infinite sequences of relations, but this fails already in the case of infinite dihedral groups since the two infinite words do not have a join in that case. Further, a completion was obtained by Dyer~\cite[Corollary~10.8]{dyer_groupoid_2011} using the theory of rootoids, but it is not suitable to our combinatorial need.
Hereafter, we investigate an extension of the weak order, containing the set of infinite reduced words, proposed by M.~Dyer and conjectured to be a complete ortholattice~\cite[Conjecture~2.5]{dyer_weak_2011}.

This extension uses the notion of {\em biclosed subsets} of positive roots, a suitable generalizations of inversion sets of words, which we recall in \S\ref{se:1}. This conjecture is still open. The main difficulties are the following:

\begin{enumerate}
	\item understand biclosed sets in general. Finite biclosed sets are the inversion sets of the elements of $W$, see \S\ref{se:1}, but what about the other biclosed sets? 
	\item understand the possible candidates for a join.  
\end{enumerate}

In this article we investigate these two points. After surveying what is known about the notions of biclosed sets and inversion sets in~\S\ref{se:1}, we give in \S\ref{sse:GeomCharac} a geometric criterion for the existence of a join in $(W,\leq)$: the join of a subset $X\subseteq W$ exists if and only if $X$ is finite and the cone spanned by the inversion sets of the elements in $X$ is strictly separated from the {\em imaginary cone} --~the conic hull of the limit roots~--, see Theorem~\ref{thm:Main1}. As a corollary, we obtain that  a subset $A$ of positive roots is a finite biclosed set if and only if $A$ is the set of roots contained in a closed halfspace that does not intersect the imaginary cone (Corollary~\ref{cor:CaracFinite}). The proof is based on a characterization of the existence of the join obtained by Dyer~\cite{dyer_weak_2011} and on the study of limit roots and imaginary cones started in~\cite{hohlweg_asymptotical_2014,dyer_imaginary_2013,dyer_imaginary2_2013}. In \S\ref{se:Inf}, we  use our geometric criterion on finite biclosed sets to give a (partially conjectural) characterization of biclosed sets corresponding to the inversion sets of infinite words, see Corollary~\ref{prop:ImN} and Conjecture~\ref{conj:CharInf}. This characterization extends to arbitrary Coxeter systems a result of P.~Cellini and P.~Papi~\cite{cellini_structure_1998} valid for affine Coxeter systems (see Remark~\ref{rem:CePa}). Finally, in \S\ref{se:OnDef} we discuss the similarities and especially the {\em differences} between the notions of {\em biclosed, biconvex} and {\em separable} subsets of positive roots. The notion of biconvex sets in particular was used in recent works~\cite{baumann_affine_2014,lam_total_2013} related to the affine cases in which the authors also conjectured an extension of the weak order similar to Dyer's conjecture.

\section{Biclosed sets, inversion sets and join in the weak order}\label{se:1}

\subsection{Geometric representation of a Coxeter system} Let $(V,B)$ be a quadratic space: $V$ is a  finite-dimensional real vector space endowed with a symmetric bilinear form $B$. The group of linear maps that preserves $B$ is denoted by $O_B(V)$. The {\em isotropic cone of $(V,B)$} is $Q=\{v\in V\,|\, B(v,v)=0\}$. To any non-isotropic vector $\alpha\in V\setminus Q$,  we associate the $B$-reflection $s_\alpha\in O_B(V)$ defined    by $s_\alpha(v)=v-2 \frac{B(\alpha,v)}{B(\alpha,\alpha)}\alpha$.
\smallskip

A {\em geometric representation of $(W,S)$} is a representation of $W$ as a subgroup of $O_B(V)$ such that $S$ is mapped into a set of $B$-reflections associated to a simple system  $\Delta=\{\alpha_s\,|\,s\in S\}$ ($s=s_{\alpha_s}$).  In this article, we always assume that $S$ is a finite set. Recall that a simple system in $(V,B)$ is a finite subset $\Delta$ in $V$ such that:
\begin{enumerate}[(i)]
 \item $\Delta$ is positively linearly independent:  if $\sum_{\alpha\in \Delta} a_\alpha\alpha$ with $a_\alpha\geq 0$, then all $a_\alpha=0$;
\item for all $\alpha, \beta \in \Delta$ distinct, 
  $\displaystyle{B(\alpha,\beta) \in \ ]-\infty,-1] \cup
    \{-\cos\left(\frac{\pi}{k}\right), k\in \mathbb Z_{\geq 2} \} }$;
\item for all $\alpha \in \Delta$, $B(\alpha,\alpha)=1$.
\end{enumerate}
 Note that, since $\Delta$ is positively linearly independant, the cone $\cone(\Delta)$ is pointed: $\cone(\Delta)\cap\cone(-\Delta)=\{0\}$ (here $\cone(A)$ is the set of non-negative linear combinations of vectors in $A$).  Note  also  that if the order $m_{st}$ of $st$ is  finite, then $B(\alpha_s,\alpha_t)=-\cos\left(\frac{\pi}{m_{st}}\right)$ and that  $B(\alpha_s,\alpha_t)\leq -1$ if and only if the order of $st$ is infinite.  In the case where $a_{st}=B(\alpha_s,\alpha_t)< -1$, we label the corresponding edge in the Coxeter graph by $a_{st}$ instead of $\infty$. 
 
A geometric representation of a Coxeter system is always faithful and gives rise to a root system $\Phi=W(\Delta)$ ($\Phi$ is the orbit of $\Delta$ under the action of $W$), which is partitioned into positive roots $\Phip=\cone(\Delta)\cap \Phi$ and negative roots $\Phi^-=-\Phip$.

The {\em rank of the root system} is the cardinality $|\Delta|=|S|$ of $\Delta$. The {\em dimension of the geometric representation} is the dimension of the linear span of $\Phi$. The {\em classical geometric representation} is obtained by assuming that $\Delta$ is a basis of $V$ and that $B(\alpha_s,\alpha_t)=-1$ if the order of $st$ in $W$ is infinite. Moreover, the combinatorial features of the root system,  such as the inversion sets defined in \S\ref{sse:Inv} below,  do not  depend on the choice of the geometric representation. For more details on geometric representations, we refer  the reader to~\cite[\S3]{bonnafe-dyer} or \cite[\S1 and \S5.3]{hohlweg_asymptotical_2014}.

\subsection{Inversion sets and the weak order}\label{sse:Inv}

We recall here a useful geometric interpretation of the weak order. First, we review the vocabulary concerning the combinatorics of reduced words.  Recall  that, for a word $w$ on an alphabet $S$, a {\em prefix} (resp. suffix) of $w$ is a word  $u$ (resp. $v$) on $S$ such that there is a word $v$ (resp. $u$) on $S$ with $w=uv$ (for the concatenation of words). For $u,v,w\in W$, we say that 

\begin{itemize}
\item {\em $w=uv$ is reduced} if $\ell(w)=\ell(u)+\ell(v)$; 
\item {\em $u$ is a prefix of $w$} if a reduced word for $u$ is a prefix of a reduced word for~$w$;
\item {\em $v$ is a suffix of $w$} if a reduced word for $v$ is a suffix of a reduced word for~$w$.
\end{itemize}
Observe that if $w=uv$ is reduced then the concatenation of any reduced word for~$u$ with any reduced word for~$v$ is a reduced word for $w$. Also $u\leq w$ in the right weak order if and only if $u$ is a prefix of $w$. Similarly, $v\leq_L w$ in the left weak order if and only if $v$ is a suffix of $w$. In this article, we consider only the right weak order and so any mention of the weak order denotes the \emph{right} weak order.

The {\em (left) inversion set of $w\in W$} is $N(w):=\Phip\cap w(\Phi^-)$, and its cardinality is $\ell(w)=|N(w)|$.  Left inversion sets have a strong connection to the weak order  as stated in the following well-known proposition.

\begin{prop}\label{prop:weakRoot} Let $u,v,w\in W$.
\begin{enumerate}[(i)]

\item If $w=s_1\cdots s_k$  is a reduced word for $w$, then
$$
N(w)=\{\alpha_{s_1},s_1(\alpha_{s_2}),\cdots , s_1\dots s_{k-1}(\alpha_{s_k})\}.
$$
\item If $w=uv$ is reduced  then $N(w)=N(u)\sqcup u(N(v))$.

\item The map $N$ is a poset monomorphism from $(W,\leq)$ to $(\mathcal P(\Phip),\subseteq )$. 

\end{enumerate}
\end{prop}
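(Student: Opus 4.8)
The plan is to establish the three parts in order, using part (i) to prove part (ii) and then using part (ii) to prove part (iii).

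\medskip

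\noindent\textbf{Part (i).} I would argue by induction on $k=\ell(w)$, unwinding the action of $w$ on a negative root. Concretely, write $w = s_1 w'$ with $w' = s_2\cdots s_k$ reduced. A root $\gamma\in\Phi^-$ has $w(\gamma)\in\Phip$ iff $w'(\gamma)$ is sent into $\Phip$ by $s_1$; since $s_1$ permutes $\Phip\setminus\{\alpha_{s_1}\}$ and sends $-\alpha_{s_1}\mapsto\alpha_{s_1}$, one checks that $N(w) = \{\alpha_{s_1}\}\sqcup s_1\big(N(w')\big)$ provided $\ell(s_1 w')>\ell(w')$, which holds by the reducedness assumption (using the standard fact that $\ell(sw)>\ell(w)\iff w^{-1}(\alpha_s)\in\Phip$, equivalently $\alpha_s\notin N(w^{-1})$; here it is cleaner to note $\alpha_{s_1}\notin s_1(N(w'))$ because $-\alpha_{s_1}\notin N(w')$ as lengths add). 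Expanding the recursion $N(w) = \{\alpha_{s_1}\}\sqcup s_1(N(w'))$ and using the inductive description of $N(w')$ gives the stated list, and disjointness of the union yields $|N(w)| = 1 + |N(w')| = \ell(w)$ along the way.

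\medskip

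\noindent\textbf{Part (ii).} Given $w=uv$ reduced, concatenate a reduced word $s_1\cdots s_p$ for $u$ with a reduced word $s_{p+1}\cdots s_{p+q}$ for $v$ to get a reduced word for $w$ by the observation quoted just before the proposition. Apply part (i) to this word: the first $p$ terms of the list are exactly $N(u)$, and the remaining $q$ terms are $s_1\cdots s_p\big(s_{p+1}\cdots s_{p+i-1}(\alpha_{s_{p+i}})\big) = u\big(N(v)\big)$ again by part (i) applied to $v$. Since the full list enumerates $N(w)$ without repetition (cardinality $\ell(w)=\ell(u)+\ell(v)$), the union $N(w) = N(u)\sqcup u(N(v))$ is disjoint.

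\medskip

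\noindent\textbf{Part (iii).} First, $N$ is order-preserving: if $u\le w$ then $w=uv$ is reduced for some $v$, so $N(u)\subseteq N(w)$ by part (ii). For the converse (which also gives injectivity), suppose $N(u)\subseteq N(w)$ with $u\ne w$; I would induct on $\ell(u)$. If $u\ne e$, pick a reduced word $s_1\cdots s_p$ for $u$, so $\alpha_{s_1}\in N(u)\subseteq N(w)$, hence $\ell(s_1 w) = \ell(w)-1$... more precisely $\alpha_{s_1}\in N(w)$ means $s_1\le w$ is false in general, so instead use: $\alpha_{s_1}\in N(w)\iff \ell(s_1^{-1}w)<\ell(w)$ is not quite it either — the correct statement is $\alpha_s\in N(w)\iff \ell(sw)<\ell(w)$, equivalently $s\le_L w$... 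Rather than risk a sign slip, I would phrase it as: $\alpha_{s_1}\in N(u)$ forces (via part (i) with $s_1$ first) that $s_1 u < u$ with $N(s_1 u) = s_1\big(N(u)\setminus\{\alpha_{s_1}\}\big)$, and likewise $\alpha_{s_1}\in N(w)$ gives $N(s_1 w) = s_1\big(N(w)\setminus\{\alpha_{s_1}\}\big)$; since $s_1$ is injective on $\Phi$, $N(u)\subseteq N(w)$ implies $N(s_1 u)\subseteq N(s_1 w)$, and $\ell(s_1 u) = \ell(u)-1$, so induction applies and yields $s_1 u \le s_1 w$, i.e.\ $s_1 u$ is a prefix of $s_1 w$; prepending $s_1$ shows $u$ is a prefix of $w$, i.e.\ $u\le w$. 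The base case $u=e$ is trivial. Injectivity follows since $N(u)=N(w)$ gives $u\le w$ and $w\le u$, hence $u=w$ as $\le$ is a partial order.

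\medskip

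\noindent I expect the main obstacle to be bookkeeping the sign/length conventions cleanly in part (iii) — making sure the statement ``$\alpha_s\in N(w)$ controls the length of $s w$'' is applied with the correct side and direction, and keeping the reflection $s_1$'s action on $\Phip$ straight — whereas parts (i) and (ii) are routine inductions.
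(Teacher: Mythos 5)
Your argument is correct, and parts (i) and (ii) match the paper's treatment: the paper simply cites the standard references for (i) (your induction via $N(sv)=\{\alpha_s\}\sqcup s(N(v))$ when $\ell(sv)>\ell(v)$ is exactly the argument those references give), and derives (ii) from (i) by concatenating reduced words, as you do. Where you genuinely diverge is the injectivity in (iii). The paper's proof is a short set-theoretic computation: from $N(u)=N(v)$ it deduces $\Phip\cap u(\Phi^-)=\Phip\cap v(\Phi^-)$ and, by taking complements in $\Phip$ and negating, $u(\Phip)=v(\Phip)$, hence $N(v^{-1}u)=\varnothing$ and $v^{-1}u=e$. You instead prove the stronger statement that $N$ reflects order ($N(u)\subseteq N(w)\Rightarrow u\leq w$) by induction on $\ell(u)$, peeling off a first letter $s_1$ of $u$ and using $\alpha_{s_1}\in N(u)\subseteq N(w)\Leftrightarrow \ell(s_1w)<\ell(w)$ to descend; this is correct (your final resolution of the sign convention, $\alpha_s\in N(w)\iff\ell(sw)<\ell(w)$, is the right one, and applying $s_1$ to the inclusion $N(u)\setminus\{\alpha_{s_1}\}\subseteq N(w)\setminus\{\alpha_{s_1}\}$ is legitimate since $s_1$ is a bijection of $\Phi$). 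Your route costs more bookkeeping but buys more: it shows $N$ is an order embedding, i.e.\ $u\leq w$ if and only if $N(u)\subseteq N(w)$, which is in fact the content of Proposition~\ref{prop:biclosed} restricted to images; the paper's route gets injectivity alone with almost no work. One small slip worth noting: in part (i), the reducedness hypothesis is needed to guarantee $\alpha_{s_1}\notin N(w')$ (so that $s_1(N(w'))\subseteq\Phip$), not to rule out $-\alpha_{s_1}\in N(w')$, which is automatic since $N(w')\subseteq\Phip$; this does not affect the validity of the argument.
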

\begin{proof} The first item follows easily from the geometric interpretation of the length function, see for instance~\cite[\S5.6]{humphreys_reflection_1992} for the classical geometric representation and \cite[Lemma 3.1]{bonnafe-dyer} in general. The second item is a direct consequence of the first one by considering a reduced word of $w$ obtained by the concatenation of a reduced word for $u$ and a reduced word for $h$. The fact that $N$ is a poset morphism follows from (ii) and the definitions above. Finally, to show\footnote{The fact that $N$ is injective is classical, see for instance~\cite[Exercise~16, p.225]{bourbaki_groupes_1968} in the case of Weyl groups or \cite[Proposition 3.2]{fleischmann_pointwise_2002} in general, we give a proof here for completeness.} that $N$ is injective, consider $u,v$ such that $N(u)=N(v)$. By definition  of $N$ we have $\Phip\cap u(\Phi^-)=  \Phip\cap v(\Phi^-)$. Then  $\Phi^-\cap u(\Phip)=  \Phi^-\cap v(\Phip)$. By taking complements of $N(u)=N(v)$ in $\Phip$, we obtain $\Phip\cap u(\Phip)=  \Phip\cap v(\Phip)$. It follows that $u(\Phip)=v(\Phip)$. In other words, $N(v^{-1}u)=\varnothing$ and therefore $v^{-1}u=e$, which concludes the proof.
\end{proof}

\subsection{Biclosed sets and the weak order} We shall now describe the image of the map $N$.  A subset $A\subseteq \Phip$ is {\em closed} if $\cone(\alpha,\beta)\cap\Phi\subseteq A$, for all $\alpha,\beta\in A$, and $A$ is {\em coclosed} if $A^c=\Phip\setminus A$ is closed. A subset of $\Phip$ is {\em biclosed} if it is both closed and coclosed. Observe that the intersection of closed sets is closed and that the union of coclosed sets is coclosed. Let $\mathcal B(\Phip)=\mathcal B$ denote the set of biclosed sets and $\mathcal B_\circ(\Phip)=\mathcal B_\circ$ the set of finite biclosed sets.   The following proposition can be found in~\cite[Lemma 4.1 (d) and (f)]{dyer_weak_2011} for instance. 

\begin{prop}\label{prop:biclosed}
The map $N:(W,\leq)\to (\mathcal B_\circ,\subseteq )$ is a poset isomorphism.
\end{prop}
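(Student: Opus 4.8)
The plan is as follows. Proposition~\ref{prop:weakRoot}(iii) already gives that $N$ is an injective, order-preserving map $(W,\le)\to(\mathcal P(\Phip),\subseteq)$, so three things remain to be shown: that $N(W)\subseteq\mathcal B_\circ$, that $N$ is surjective onto $\mathcal B_\circ$, and that $N^{-1}$ is order-preserving. For the first, $N(w)$ is finite since $|N(w)|=\ell(w)$; and both $N(w)=\Phip\cap w(\Phi^-)$ and its complement $\Phip\setminus N(w)=\Phip\cap w(\Phip)$ (the latter because $\beta\in\Phip$ lies outside $w(\Phi^-)$ exactly when $w^{-1}\beta\in\Phip$) are closed: if $\alpha,\beta$ both lie in $w(\Phi^-)$ (resp. $w(\Phip)$) and $\gamma\in\Phi\cap\cone(\{\alpha,\beta\})$, then $\gamma\in\cone(\Delta)$ forces $\gamma\in\Phip$, while $w^{-1}\gamma\in\cone(\{w^{-1}\alpha,w^{-1}\beta\})\cap\Phi$ lies in $\cone(-\Delta)\cap\Phi=\Phi^-$ (resp. $\cone(\Delta)\cap\Phi=\Phip$), so $\gamma$ again belongs to $w(\Phi^-)$ (resp. $w(\Phip)$). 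Hence $N(w)\in\mathcal B_\circ$.

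The heart of the proof is surjectivity, which I would establish by induction on $|A|$ for $A\in\mathcal B_\circ$, the case $A=\varnothing=N(e)$ being trivial. The key lemma is that a nonempty biclosed set contains a simple root. For this, recall the depth function $\dep$ on $\Phip$: one has $\dep(\gamma)=1$ iff $\gamma\in\Delta$, and for $\gamma\in\Phip\setminus\Delta$ there is some $s\in S$ with $s(\gamma)\in\Phip$, $\dep(s(\gamma))<\dep(\gamma)$ and $B(\alpha_s,\gamma)>0$, so that $\gamma=s(\gamma)+2B(\alpha_s,\gamma)\alpha_s\in\cone(\{\alpha_s,s(\gamma)\})$. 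Choose $\gamma_0\in A$ of minimal depth $d$. If $d\ge 2$, pick such an $s$ for $\gamma_0$: closedness of $\Phip\setminus A$ forbids $\alpha_s$ and $s(\gamma_0)$ from both lying outside $A$ (otherwise $\gamma_0\in\cone(\{\alpha_s,s(\gamma_0)\})\cap\Phi\subseteq\Phip\setminus A$), so $\alpha_s\in A$ or $s(\gamma_0)\in A$; but $\dep(\alpha_s)=1<d$ and $\dep(s(\gamma_0))<d$ both contradict the minimality of $d$. Hence $d=1$.

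Granting the lemma, fix $\alpha_s\in\Delta\cap A$ and set $A':=s(A\setminus\{\alpha_s\})$. Since $s$ permutes $\Phip\setminus\{\alpha_s\}$ and sends $\alpha_s$ to $-\alpha_s$, we have $A'\subseteq\Phip$, $|A'|=|A|-1$, and $\Phip\setminus A'=\{\alpha_s\}\sqcup s(\Phip\setminus A)$. That $A'\in\mathcal B_\circ$ is then verified by carrying cones through $s$: closedness of $A'$ reduces to that of $A$, and closedness of $\Phip\setminus A'$ reduces to that of $\Phip\setminus A$ except for the case of a pair $\alpha_s,\nu$ with $\nu=s(\delta)$ and $\delta\in\Phip\setminus A$; there, for $\rho\in\Phi\cap\cone(\{\alpha_s,\nu\})$ with $\rho\ne\alpha_s$ one writes $\rho=p\alpha_s+q\nu$ with $q>0$, deduces $\delta\in\cone(\{\alpha_s,s(\rho)\})$, and concludes $s(\rho)\notin A$ --- for otherwise closedness of $A$ together with $\alpha_s\in A$ would give $\delta\in A$ --- so that $\rho\in s(\Phip\setminus A)$. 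By induction $A'=N(w')$ for some $w'\in W$; since $\alpha_s\notin N(w')$ the product $sw'$ is reduced, and Proposition~\ref{prop:weakRoot}(ii) with $u=s$ gives $N(sw')=N(s)\sqcup s(N(w'))=\{\alpha_s\}\sqcup s(A')=\{\alpha_s\}\sqcup(A\setminus\{\alpha_s\})=A$.

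Finally, $N^{-1}$ is order-preserving: if $N(u)\subseteq N(v)$, argue by induction on $\ell(u)$; for $u\ne e$ pick $\alpha_s\in\Delta\cap N(u)\subseteq N(v)$, so $s$ is a left descent of both $u$ and $v$, say $u=su'$ and $v=sv'$ reduced; then Proposition~\ref{prop:weakRoot}(ii) gives $N(u')=s(N(u)\setminus\{\alpha_s\})\subseteq s(N(v)\setminus\{\alpha_s\})=N(v')$, so $u'\le v'$ by induction and therefore $u\le v$. Together with the first two parts, this shows $N$ is a poset isomorphism onto $\mathcal B_\circ$. I expect the surjectivity to be the main obstacle, specifically the key lemma (which combines the depth recursion with coclosedness) and the bookkeeping needed to check that $s(A\setminus\{\alpha_s\})$ is again biclosed --- the handling of the simple root $\alpha_s$ in the coclosedness check being the delicate point.
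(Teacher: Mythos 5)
Your proof is correct, but note that the paper does not actually prove Proposition~\ref{prop:biclosed}: it simply cites \cite[Lemma 4.1 (d) and (f)]{dyer_weak_2011} and, in the remark that follows, a list of earlier sources. The argument in those references is essentially the one you give --- surjectivity by induction on $|A|$, driven by the fact that a nonempty biclosed set contains a simple root, which is extracted from the depth recursion together with coclosedness exactly as in your key lemma --- so your proposal reconstructs the standard proof rather than diverging from it. The only step you leave implicit is that, when checking closedness of $A'=s(A\setminus\{\alpha_s\})$, a root $\rho\in\cone(\mu,\nu)\cap\Phi$ with $\mu,\nu\in A'$ cannot equal $\alpha_s$; this is immediate, since otherwise $s(\rho)=-\alpha_s$ would lie in $\cone(s(\mu),s(\nu))\subseteq\cone(\Delta)$, contradicting the pointedness of $\cone(\Delta)$ (equivalently, $B(\alpha_s,\alpha_s)=1$ while $B(\alpha_s,\beta)\le 0$ for the other simple roots shows a simple root is never a nonnegative combination of other positive roots). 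With that observation every step of your argument goes through.
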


Examples are given in Figure~\ref{fig:A2inv} and Figure~\ref{fig:InfiniteDi}
\begin{figure}[!htbp]
	\begin{center}
		\begin{tikzpicture}
	[scale=1,
	 pointille/.style={dashed},
	 axe/.style={color=black, very thick},
	 sommet/.style={inner sep=2pt,circle,draw=blue!75!black,fill=blue!40,thick,anchor=west}]
	 
   \node[sommet]  (id)    [label=below:${N(e)=\varnothing}$]          at (0,0)    {};
	\node[sommet]  (e1)    [label=left:${N(s_1)=\{\alpha_{s_1}\}}$]         at (-1,1)   {} edge[thick] (id);
	\node[sommet]  (e2)    [label=right:${\{\alpha_{s_2}\}=N(s_2)}$]        at (1,1)    {} edge[thick] (id);
	\node[sommet]  (e12)   [label=left:${N(s_1s_2)=\{\alpha_{s_1},\alpha_{s_1}+\alpha_{s_2}\}}$]      at (-1,2)   {} edge[thick] (e1);
	\node[sommet]  (e21)   [label=right:${\{\alpha_{s_2},\alpha_{s_1}+\alpha_{s_2}\}=N(s_2s_1)}$]     at (1,2)    {} edge[thick] (e2);
	\node[sommet]  (e121)  [label=above:${N(s_1s_2s_1)=\Phip}$]    at (0,3)    {} edge[thick] (e12) edge[thick] (e21);
\end{tikzpicture}
		\caption{\label{fig:A2inv} The weak order on the Coxeter group of type $A_2$.}
	\end{center}
\end{figure}
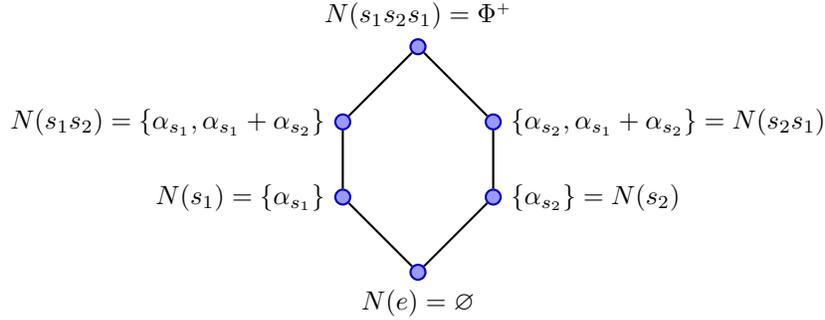

\begin{remark} As far as we know, this proposition was stated first in an article by Kostant~\cite[Proposition 5.10]{kostant_lie_1961}, then as an exercise in Bourbaki~\cite[p.225, Exercice 16]{bourbaki_groupes_1968}, in the case of finite crystallographic root system. An extended version for a possible infinite non crystallographic root system in the classical geometric representation was stated in \cite[Proposition~2 and Proposition~3]{bjoerner_ordering_1984}  for biconvexity instead of bicloseness. A version involving initial section can be found in~\cite[\S2.11]{dyer_hecke_1993}.  Then the first  complete proof stated for finite biclosed sets in a infinite Coxeter group can be found in \cite[\S8]{pilkington_convex_2006}.  The proof for an arbitrary root system in an arbitrary geometric representation is precisely the same as the ones given before and can be found in~\cite[Lemma 4.1]{dyer_weak_2011}. 
\end{remark}

\subsection{Join in the weak order} The weak order is a complete meet semi-lattice~\cite[Theorem~3.2.1]{bjoerner_combinatorics_2005}: any non-empty subset $X\subseteq W$ admits a greatest lower bound called the {\em meet} of $X$ and denoted by $\bigwedge X$. As explained in the introduction, the weak order turns out to be a complete ortholattice whenever $W$ is finite. So in this case $X$ always admits a least upper bound called the {\em join} of~$X$, denoted by $\bigvee X$. When $W$ is infinite, however, it is not immediately clear if and when the join of $X$ exists and how to compute it.   From the point of view of combinatorics of reduced words, the join of $X$, if it exists, should be the smallest length element in $W$ that has any element in $X$ as a prefix. 

\begin{remark} In general, $N(\bigwedge X)\not = \bigcap_{x\in X} N(x)$. For instance, let $(W,S)$ be of type $A_2$ and with $S=\{s_1,s_2\}$ as in Figure~\ref{fig:A2inv}.   The inversion set $$N(s_1s_2 \wedge s_2s_1)=N(e)=\varnothing$$ is not equal to $N(s_1s_2) \cap N(s_2s_1)=\{\alpha_{s_1}+\alpha_{s_2}\}$, which is not even biclosed. 
\end{remark}

In~\cite{dyer_weak_2011}, Dyer studied the problem  of the existence of join in the weak order. We explain now the characterization he obtains. 

\begin{defi} $\ $
\begin{enumerate}
\item  We say that $X$ is {\em bounded in $W$} if there is $g\in W$ such that $x\leq g$ for all $x\in X$.  Observe that $X$ is necessarily finite since there are only a finite number of prefixes of $g$.
 
\item Similarly, we say that $A\subseteq \Phip$ is {\em bounded in $\Phip$} if there is a finite biclosed set $B\in \mathcal B_\circ$ containing $A$. Hence $A$ is therefore finite.

\item The {\em $2$-closure} $\overline{A}$ of $A\subseteq \Phip$ is the intersection of all closed subsets of $\Phip$ containing $A$. Since the intersection of closed sets is closed, $\overline{A}$ is the smallest closed subset of $\Phip$ containing $A$. 
\end{enumerate}
\end{defi}

The $2$-closure terminology was introduced by Dyer~\cite{dyer_weak_2011}; see also~\cite{pilkington_convex_2006} for a discussion on different types of closure operators.

\begin{ex} Assume that $W$ is the infinite dihedral group generated with Coxeter graph:
\begin{center}
\begin{tikzpicture}[sommet/.style={inner sep=2pt,circle,draw=blue!75!black,fill=blue!40,thick}]
	\node[sommet,label=above:$s$] (alpha) at (0,0) {};
	\node[sommet,label=above:$t$] (beta) at (1,0) {} edge[thick] node[auto,swap] {$\infty$} (alpha);

\end{tikzpicture}
\end{center}
Then $X$ is bounded if and only if $X$ is finite and all $x\in X$ start with the same letter (either $s$ or $t$), see~Figure~\ref{fig:InfiniteDi}.
\end{ex}

By abuse of notation, we write  $N(X):=\bigcup_{x\in X}N(x)$.

\begin{thm}[{Dyer~\cite[Theorem~1.5]{dyer_weak_2011}}]\label{thm:Dyer} Let $X\subseteq W$. Then  the following statements are equivalent: 

\begin{itemize}
	\item[(i)] $\bigvee X$ exists;
	\item[(ii)] $X$ is bounded in $W$;
	\item[(iii)] $N(X)$ is bounded in $\Phip$.
\end{itemize}
\noindent Moreover, in this case, $N(\bigvee X)= \overline{N(X)}$. 
\end{thm}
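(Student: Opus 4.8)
The plan is to establish the cycle of implications $(i)\Rightarrow(ii)\Rightarrow(iii)\Rightarrow(i)$ together with the final formula, using Proposition~\ref{prop:biclosed} (the isomorphism $N\colon(W,\leq)\to(\mathcal B_\circ,\subseteq)$) as the main bridge between the order-theoretic and the root-theoretic sides. The implication $(i)\Rightarrow(ii)$ is immediate: if $g=\bigvee X$ exists then $x\le g$ for all $x\in X$, so $X$ is bounded in $W$ by definition. The implication $(ii)\Rightarrow(iii)$ is also essentially formal: if $x\le g$ for all $x\in X$, then by Proposition~\ref{prop:weakRoot}(iii) we get $N(x)\subseteq N(g)$ for all $x$, hence $N(X)=\bigcup_{x\in X}N(x)\subseteq N(g)$, and $N(g)\in\mathcal B_\circ$ is a finite biclosed set, so $N(X)$ is bounded in $\Phip$.

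The substantive implication is $(iii)\Rightarrow(i)$, and this is where I expect the real work. Suppose $N(X)$ is bounded in $\Phip$, say $N(X)\subseteq B$ with $B\in\mathcal B_\circ$. First I would show that the $2$-closure $\overline{N(X)}$ is itself biclosed and finite: it is closed by construction, it is finite because $\overline{N(X)}\subseteq B$ (the $2$-closure is the \emph{smallest} closed set containing $N(X)$, and $B$ is one such closed set, so $\overline{N(X)}\subseteq B$), and the delicate point is \emph{coclosedness} --- one must argue that the complement $\Phip\setminus\overline{N(X)}$ is closed. This is the crux: one needs that adding all the "forced" roots (via the $2$-closure operation on the finite set $N(X)$, which only ever pulls in finitely many new roots at each step and terminates) does not destroy coclosedness, given that everything sits inside the biclosed set $B$. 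Once $\overline{N(X)}\in\mathcal B_\circ$ is established, Proposition~\ref{prop:biclosed} produces a unique $g\in W$ with $N(g)=\overline{N(X)}$. Since $N(x)\subseteq N(X)\subseteq\overline{N(X)}=N(g)$ for every $x\in X$, the isomorphism gives $x\le g$, so $g$ is an upper bound for $X$; thus $(i)$ holds and $\bigvee X$ exists.

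It remains to identify $\bigvee X$, i.e.\ to prove $N(\bigvee X)=\overline{N(X)}$. Write $j=\bigvee X$. On one hand $j\le g$ (as $g$ is an upper bound and $j$ is the least one), so $N(j)\subseteq N(g)=\overline{N(X)}$. On the other hand, for each $x\in X$ we have $x\le j$, hence $N(x)\subseteq N(j)$, so $N(X)\subseteq N(j)$; since $N(j)$ is biclosed, in particular closed, and $\overline{N(X)}$ is the smallest closed set containing $N(X)$, we get $\overline{N(X)}\subseteq N(j)$. The two inclusions give $N(j)=\overline{N(X)}$, as desired. The main obstacle throughout is the purely combinatorial-geometric claim that the $2$-closure of a bounded finite set of positive roots is again biclosed (equivalently, that $\mathcal B_\circ$ is closed under the join operation $A\mapsto\overline{A}$ when $A$ is bounded); the boundedness hypothesis, i.e.\ the presence of the ambient finite biclosed set $B$, is exactly what makes coclosedness survive, and making that precise is the heart of the argument --- in the present article this is quoted from Dyer's paper, so I would cite \cite{dyer_weak_2011} for that step rather than reprove it.
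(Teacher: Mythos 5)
Your reduction is correct, and it is consistent with how the paper itself treats this statement: the paper gives no proof of Theorem~\ref{thm:Dyer} at all, quoting it directly from Dyer~\cite{dyer_weak_2011}, so the one step you defer to that reference --- that the $2$-closure of a set contained in a finite biclosed set is again coclosed, hence by Proposition~\ref{prop:biclosed} an inversion set --- is exactly the non-formal content of the cited result, and you have correctly isolated it. The implications $(i)\Rightarrow(ii)\Rightarrow(iii)$ and the two-inclusion argument identifying $N(\bigvee X)$ with $\overline{N(X)}$ are all sound. One small point worth making explicit: after you produce the upper bound $g$ with $N(g)=\overline{N(X)}$, the passage from ``$X$ is bounded above'' to ``$\bigvee X$ exists'' is not automatic in an arbitrary poset; it uses that $(W,\leq)$ is a complete meet semilattice (Bj\"orner's theorem, \cite[Theorem~3.2.1]{bjoerner_combinatorics_2005}, quoted in \S\ref{se:1}), so that $\bigvee X$ can be taken to be the meet of the nonempty set of upper bounds of $X$. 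With that one line added, your outline is a faithful skeleton of the argument, with the same single black box the paper itself leaves to Dyer.
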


We discuss briefly some consequences and questions raised by this theorem.

\begin{enumerate}[(a)]

\item  Theorem~\ref{thm:Dyer} gives necessary and sufficient conditions for the join to exist and a way to compute it: if $X$ is bounded then $\overline{N(X)}$ is finite and is the inversion set of the join $\bigvee X$. Unfortunately, the computation of the $2$-closure of a set of positive roots is not necessarily easy and we do not know of any combinatorial rule to produce $\bigvee X$, not even if $X=\{u,v\}$.  However, we explain in the next part of this section how to compute the join geometrically, if it exists.  

\item The theorem states also that if $\bigvee X$ exists, then $\overline{N(X)}$ is a finite biclosed set since it is the inversion set of an element, by Proposition~\ref{prop:biclosed}. Dyer~\cite[Remarks 1.5]{dyer_weak_2011} asks if the converse of this last statement is true  if we drop the biclosed condition. This is discussed in \S\ref{sse:GeomCharac}.  

\item If $W$ is finite, then $\mathcal B=\mathcal B_\circ$ and the poset $(\mathcal B,\subseteq)$ is a complete ortholattice isomorphic to the weak order with maximal element $N(w_\circ)=\Phip$; the join is given by the formula given in Theorem~\ref{thm:Dyer} and the meet is $\bigwedge A= \overline{A^c}^c$ for $A\in \mathcal B$.  Dyer conjectures that this is true even if $W$ is infinite.

\begin{conject}[{Dyer~\cite[Conjecture 2.5]{dyer_weak_2011}}]\label{conj:Dyer}  The poset $(\mathcal B,\subseteq)$ of biclosed sets ordered by inclusion is a complete ortholattice. The join of a family $A \subseteq\mathcal B$, is $\bigvee A=\overline{\bigcup A}$, and the ortholattice complement is the set complement in $\Phip$.
\end{conject}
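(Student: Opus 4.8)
The plan is to separate the conjecture into a formal skeleton, which follows from standard order-theoretic bookkeeping, and a hard geometric core, which I would attack with the imaginary cone machinery developed in \S\ref{sse:GeomCharac}. The first step is to observe that the whole conjecture is equivalent to the single assertion that, for every family $\mathcal F\subseteq\mathcal B$, the $2$-closure $\overline{\bigcup\mathcal F}$ is biclosed. Granting this, $\overline{\bigcup\mathcal F}$ is the least upper bound of $\mathcal F$ in $(\mathcal B,\subseteq)$: it contains every member of $\mathcal F$, and any biclosed upper bound contains $\bigcup\mathcal F$ and is closed, hence contains its $2$-closure. So every subset of $\mathcal B$ has a join, and since $\mathcal B$ has least element $\varnothing$ and greatest element $\Phip$, it is automatically a complete lattice. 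The orthocomplementation then comes for free: $A\mapsto A^c=\Phip\setminus A$ is an order-reversing involution that preserves biclosedness (the definition of biclosed is symmetric in $A$ and $A^c$), with $A\vee A^c=\overline{\Phip}=\Phip$ and $A\wedge A^c=\varnothing$ because $A\cap A^c=\varnothing$. Since $\overline{\bigcup\mathcal F}$ is \emph{closed} by construction, the entire content of the conjecture is the coclosedness of $\overline{\bigcup\mathcal F}$.

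Next I would try to reduce from arbitrary families to pairs $\mathcal F=\{A,B\}$. The favorable half of this is elementary: by \S\ref{se:1} a union of coclosed sets is coclosed, so if the finite case is known and one can prove the identity $\overline{\bigcup\mathcal F}=\bigcup\{\,\overline{\bigcup\mathcal H}\mid \mathcal H\subseteq\mathcal F\text{ finite}\,\}$, then the right-hand side is simultaneously closed (being $\overline{\bigcup\mathcal F}$) and coclosed (a union of coclosed sets), hence biclosed. The unfavorable half is that $2$-closure need not be finitary: $\cone(\alpha,\beta)\cap\Phi$ may be infinite, so a single closure step can add infinitely many roots and the generation of $\overline{\,\cdot\,}$ can pass through transfinitely many non-closed intermediate sets. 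Showing that each root entering $\overline{\bigcup\mathcal F}$ is already forced by finitely many roots of $\bigcup\mathcal F$ — using finiteness of the rank and the geometry of $\cone(\alpha,\beta)\cap\Phi$ relative to the imaginary cone — is a genuine hurdle; if it resists, I would attack arbitrary families directly by a transfinite-induction argument interpolating a biclosed set between $\bigcup\mathcal F$ and any prescribed biclosed bound.

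It remains to prove that $\overline{A\cup B}$ is biclosed for $A,B\in\mathcal B$, and this is where the real difficulty lies. When $A$ and $B$ are finite with $A\cup B$ bounded, this is exactly Theorem~\ref{thm:Dyer}: $A=N(u)$, $B=N(v)$, $u\vee v$ exists, and $\overline{A\cup B}=N(u\vee v)$. The obstructions are (a) finite biclosed sets whose union is \emph{unbounded}, where $\overline{A\cup B}$ is typically an infinite biclosed set (for instance all of $\Phip$), and (b) infinite biclosed sets, which need not be cut out by a single linear functional. My strategy would be to first upgrade the geometric description of finite biclosed sets — the roots in a closed half-space missing the imaginary cone, Corollary~\ref{cor:CaracFinite} — to \emph{all} biclosed sets, in the spirit of Corollary~\ref{prop:ImN} and Conjecture~\ref{conj:CharInf}: attach to every biclosed set a concrete certificate, such as a compatible reflection-order-type total preorder on $\Phip$, or a nested filtration by half-spaces disjoint from the imaginary cone, and then show that two such certificates amalgamate into one witnessing $\overline{A\cup B}$. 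I expect this to be \emph{the} main obstacle: at present there is no workable description of the biclosed sets that fail to be inversion sets of finite or infinite reduced words, and without such a description the amalgamation step has nothing to grip; proving that the purely combinatorial operator $\overline{\,\cdot\,}$ returns something coclosed rather than merely closed is precisely what has kept the conjecture open. I would therefore test the whole plan first on affine Coxeter systems, where the imaginary cone is particularly simple and the work of Cellini--Papi describes the infinite biclosed sets completely, before attempting the general case.
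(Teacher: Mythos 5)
The statement you are asked about is Conjecture~\ref{conj:Dyer}, and the paper does not prove it: it is Dyer's conjecture, explicitly described in the introduction and in \S\ref{se:1} as still open, and the paper only records partial progress towards it (Theorem~\ref{thm:Main1}, Corollary~\ref{cor:CaracFinite}, Proposition~\ref{prop:Infinite}, Corollary~\ref{prop:ImN}, and the rank-$3$ biconvex analogue from the second author's thesis). So there is no proof in the paper to compare yours against, and your text, quite properly, does not claim to be one either. Your formal reductions are sound and match the way the paper frames the problem: the conjecture is equivalent to the coclosedness of $\overline{\bigcup\mathcal F}$ for every family $\mathcal F\subseteq\mathcal B$ (closedness is automatic, the orthocomplement $A\mapsto A^c$ is an order-reversing involution preserving $\mathcal B$, $A\vee A^c=\overline{\Phip}=\Phip$, and any biclosed lower bound of $\{A,A^c\}$ lies in $A\cap A^c=\varnothing$), and a poset with all joins and a bottom element is a complete lattice. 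One remark on your ``unfavorable half'': the $2$-closure \emph{is} finitary, because each root entering the closure is derived from exactly two premises $\alpha,\beta$ (the rule $\{\alpha,\beta\}\vdash\gamma$ for $\gamma\in\cone(\alpha,\beta)\cap\Phi$ has infinitely many conclusions but only two hypotheses), so a routine transfinite induction gives $\overline{A}=\bigcup\{\overline{F}\mid F\subseteq A\text{ finite}\}$ and your reduction to pairs goes through without the hurdle you fear.

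The genuine gap is the one you name yourself: nothing in the proposal establishes that $\overline{A\cup B}$ is coclosed for $A,B\in\mathcal B$ beyond the bounded case already covered by Theorem~\ref{thm:Dyer}, and that coclosedness is precisely the open content of the conjecture. Your proposed route --- upgrade Corollary~\ref{cor:CaracFinite} to a geometric certificate attached to \emph{every} biclosed set and then amalgamate certificates --- runs into exactly the obstruction documented in \S\ref{se:OnDef}: infinite biclosed sets need not be biconvex, need not be separable, and need not be cut out by a half-space missing $\conv(E)$ (see the universal rank-$4$ example and Figure~\ref{fig:AffineA3}, where $\overline{N(X)}$ is an infinite biclosed join that is neither separable nor the inversion set of a finite or infinite reduced word). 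So the certificate you hope to attach cannot be a single half-space or a reflection-order initial section in general, and no adequate substitute is currently known. Your plan is a reasonable research programme consistent with the paper's own discussion, but it is not a proof of the statement.
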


In other words, for this conjecture to be true, we have to show that the closed set $\overline{\bigcup A}$ is also coclosed; this is the case for infinite dihedral groups as readily seen in Figure~\ref{fig:InfiniteDi}. The present work was motivated by this  conjecture.  We believe that our present investigation of biclosed sets and joins in the weak order will lead to an understanding of the existence of $\bigvee A$ in the case where the family~$A$ in Conjecture~\ref{conj:Dyer} is constituted of finite biclosed sets and biclosed sets corresponding to infinite reduced words. This would be a first step toward answering this conjecture.

\begin{figure}[h!]
\resizebox{.95\hsize}{!}{
\begin{tikzpicture}
	[scale=1,
	 pointille/.style={dashed},
	 axe/.style={color=black, very thick},
	 sommet/.style={inner sep=2pt,circle,draw=blue!75!black,fill=blue!40,thick,anchor=west}]
	 
\node[sommet]  (id)    [label=below:{\small{$N(e)=\varnothing$}}]          at (0,0)    {};
\node[sommet]  (1)    [label=left:{\small{$N(s)=\{\alpha_s\}$}}]         at (-1,1)   {} edge[thick] (id);
\node[sommet]  (2)    [label=right:{\small{$N(t)=\{\alpha_t\}$}}]        at (1,1)    {} edge[thick] (id);
\node[sommet]  (12)   [label=left:{\small{$N(st)=\{\alpha_s,s(\alpha_t)\}$}}]      at (-1,2)   {} edge[thick] (1);
\node[sommet]  (21)   [label=right:{\small{$N(ts)=\{\alpha_t,t(\alpha_s)\}$}}]     at (1,2)    {} edge[thick] (2);
\node[sommet]  (121)  [label=left:{\small{$N(sts)=\{\alpha_s,s(\alpha_t),st(\alpha_s)\}$}}]    at (-1,3)    {} edge[thick] (12);
\node[sommet]  (212)  [label=right:{\small{$N(tst)=\{\alpha_t,t(\alpha_s),ts(\alpha_t)\}$}}]    at (1,3)    {} edge[thick] (21);

\draw[pointille] (121) -- +(0,1);
\draw[pointille] (212) -- +(0,1);

\node[sommet]  (12inf)  [label=left:{\small{$N((st)^\infty)=\{(n+1)\alpha_s+n\alpha_t\,|\,n\in\mathbb N\}$}}]    at (-1,4.5)    {};
\node[sommet]  (21inf)  [label=right:{\small{$N((ts)^\infty)=\{n\alpha_s+(n+1)\alpha_t\,|\,n\in\mathbb N\}$}}]    at (1,4.5)    {};

\node[sommet]  (12c)   [label=left:{\small{$\Phip\setminus N(ts)$}}]      at (-1,6)   {} ;
\node[sommet]  (21c)   [label=right:{\small{$\Phip\setminus N(st)$}}]     at (1,6)    {} ;
\draw[pointille] (12c) -- +(0,-1);
\draw[pointille] (21c) -- +(0,-1);
\node[sommet]  (1c)    [label=left:{\small{$\Phip\setminus N(t)$}}]         at (-1,7)   {} edge[thick] (12c);
\node[sommet]  (2c)    [label=right:{\small{$\Phip\setminus N(s)$}}]        at (1,7)    {} edge[thick] (21c);
\node[sommet]  (id)    [label=above:{\small{$\Phip$}}]          at (0,8)    {} edge[thick] (1c) edge[thick] (2c);

\end{tikzpicture}}
\caption{The weak order on biclosed sets for the infinite dihedral group.}
\label{fig:InfiniteDi}
\end{figure}
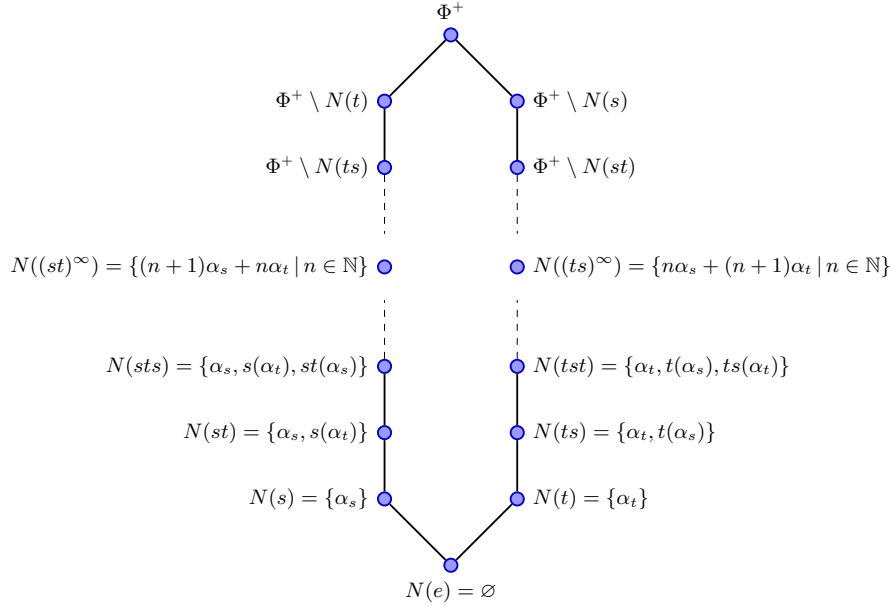

\begin{remark} Conjecture~\ref{conj:Dyer} is related to the theory of initial sections and reflection orders, developed by Dyer\cite{dyer_hecke_1993,dyer_quotients_1994} as a tool to study Kazhdan-Lusztig polynomials, see~\cite[Introduction and \S2]{dyer_weak_2011}. Initial sections and reflection orders appeared also in the works of Cellini-Papi~\cite{cellini_structure_1998} and Ito~\cite{ito_classification_2001,ito_parameterizations_2005}.  Recently, questions about initial sections and the weak order appeared in the work of Baumann, Kamnitzer and Tingley~\cite[\S2.5]{baumann_affine_2014}.  The same year, Lam and Pylyavskyy~\cite[Theorem~4.10]{lam_total_2013} showed that the set of biclosed sets corresponding to inversion sets of finite and infinite reduced words in an affine Coxeter group is a meet semi-lattice. We discuss this particular class of (possibly infinite) biclosed sets in~\S\ref{se:Inf}. 
\end{remark}

\end{enumerate}
\subsection{Geometric construction of the join in the weak order}

Here we extract from~\cite[\S11 and Theorem 11.6]{dyer_weak_2011} that if the join exists, then the  $2$-closure can be replaced by the conic closure. For $A\subseteq V$, denote $\cone(A)$ the set of all non-negative linear combinations of elements of $A$ and $\cone_\Phi(A)=\cone(A)\cap \Phi$ the (possibly empty) set of roots contained in $\cone(A)$. Observe that $\cone_\Phi(\Delta)=\Phip$. As an immediate consequence, we have that if $A\subseteq\cone(\Delta)$ then $\cone_\Phi(A)\subseteq \Phip$.

\begin{defi} Let $A\subseteq \Phip$.
\begin{enumerate}
\item The set $A$ is {\em convex} if $A=\cone_\Phi(A)$ and {\em coconvex} if $A^c=\Phip\setminus A$ is convex.
\item The set $A$ is {\em biconvex} if $A$ is convex and coconvex.
\item The set $A$ is {\em separable} if $\cone(A)\cap\cone(A^c)=\{0\}$.
\end{enumerate}
\end{defi}

Note that $A$ is biconvex (resp. separable) if and only if $A^c$ is. The next lemma exhibits some relationship between these notions. 

\begin{lem}\label{lem:SepaConvClos} Let $A\subseteq \Phip$.
\begin{enumerate}[(i)]
\item If $A$ is convex, then $A$ is closed.
\item $A\textrm{ separable}\Longrightarrow A\textrm{ biconvex}\Longrightarrow A\textrm{ biclosed}$.
\item For any linear hyperplane $H$, the set of positive roots contained (strictly or not) in one side of the halfspace bounded by $H$  is separable (hence biclosed).
\end{enumerate}
\end{lem}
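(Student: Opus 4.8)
The plan is to prove the three items in order, using only elementary properties of cones, roots, and the fact (noted above) that $\cone(\Delta)$ is pointed. The overall strategy is: reduce ``closed'' to a statement about cones, then reduce ``convex'' to ``closed'' by the same observation, then reduce ``separable'' to a single hyperplane argument, and finally verify item (iii) directly.

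For (i), suppose $A$ is convex, i.e.\ $A=\cone_\Phi(A)=\cone(A)\cap\Phi$. Take $\alpha,\beta\in A$; then $\cone(\alpha,\beta)\subseteq\cone(A)$, so $\cone(\alpha,\beta)\cap\Phi\subseteq\cone(A)\cap\Phi=A$, which is exactly the definition of $A$ being closed. So (i) is essentially immediate. For (ii), the implication ``separable $\Rightarrow$ biconvex'' is the substantive step: assume $\cone(A)\cap\cone(A^c)=\{0\}$ and suppose, for contradiction, that $A$ is not convex, so there is a root $\gamma\in(\cone(A)\cap\Phi)\setminus A$, i.e.\ $\gamma\in A^c$. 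Then $\gamma\in\cone(A)\cap\cone(A^c)$ and $\gamma\neq 0$ (roots are nonzero since $B(\gamma,\gamma)=1$), contradicting separability; hence $A$ is convex. Since separability is symmetric in $A$ and $A^c$, the same argument gives $A^c$ convex, so $A$ is biconvex. The implication ``biconvex $\Rightarrow$ biclosed'' then follows by applying (i) to both $A$ and $A^c$.

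For (iii), fix a linear hyperplane $H=\ker f$ for some linear functional $f$, and let $A=\{\alpha\in\Phip\mid f(\alpha)\geq 0\}$ (the other choices of side, strict or not, are handled identically or by passing to $A^c$ and using the symmetry of separability). I must show $\cone(A)\cap\cone(A^c)=\{0\}$. Here $A^c=\{\alpha\in\Phip\mid f(\alpha)<0\}$. Any nonzero element of $\cone(A)$ is a nonnegative combination $\sum a_i\alpha_i$ with $\alpha_i\in A$, not all coefficients zero, so $f(\sum a_i\alpha_i)=\sum a_i f(\alpha_i)\geq 0$; likewise any element of $\cone(A^c)$ has $f$-value $\leq 0$. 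Thus a common element $v$ has $f(v)=0$, so $v\in H$, and moreover $v=\sum b_j\beta_j$ with $\beta_j\in A^c$, $b_j\geq 0$: if any $b_j>0$ then since $f(\beta_j)<0$ for all $j$ we would get $f(v)<0$, a contradiction; hence all $b_j=0$ and $v=0$. This proves $A$ is separable, and then $A$ is biclosed by (ii).

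The main obstacle, such as it is, is purely organizational: being careful about the several variants in (iii) (which side, open vs.\ closed halfspace) and making sure separability is invoked for the correct set, since it is the symmetry $A$ separable $\iff A^c$ separable that lets one dispatch all four cases from one computation. There is no deep idea here; the only place one has to be slightly alert is checking that a root is never the zero vector (guaranteed by $B(\alpha,\alpha)=1$) so that the common element forced to lie in $\cone(A)\cap\cone(A^c)$ is genuinely $0$ and not merely in $H$.
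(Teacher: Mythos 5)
Your proof is correct and follows essentially the same route as the paper: item (i) directly from the definitions, the separable $\Rightarrow$ biconvex step by producing a root in $\cone(A)\cap\cone(A^c)$ from a failure of convexity, and item (iii) by observing that $\cone(A)$ and $\cone(A^c)\setminus\{0\}$ lie in the closed and open halfspaces respectively. Your added care about roots being nonzero and about the four variants of halfspace in (iii) is a harmless elaboration of what the paper leaves implicit.
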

\begin{proof} The first item follows easily from the definitions. The second implication of~$(ii)$ follows from $(i)$. Assume that $A$ is separable. Suppose by contradiction that $\cone_\Phi(A)\not = A$.  Note that $A\subseteq \cone_{\Phi}(A)\subseteq \cone(A)$. So there is $\beta\in A^C\cap \cone_\Phi(A)$. Hence $\beta\in\cone(A)\cap \cone(A^c)$,  contradicting the definition of separable.  Thus $A$ is convex. The proof is similar to show that $A$ is coconvex. Therefore $A$ is biconvex. 

Now let us prove $(iii)$. Consider a hyperplane $H=\ker\rho$ ($\rho$ is a linear form on $V$) and $A:=\{\beta\in\Phip\,|\, \rho(\beta)\geq 0\}$. Write $H_{\geq 0}=\{v\in V\,|\,\rho(v)\geq 0\}$ and $H_{<0}=\{v\in V\,|\,\rho(v)<0\}$. Note that $A^c\subseteq H_{<0}$. Therefore we have $\cone(A)\subseteq H_{\geq 0}$ and $\cone(A^c)\setminus\{0\}\subseteq H_{<0}$. So those two cones can only intersect in $0$, which implies that $A$ is separable. 
\end{proof}

The converse of $(i)$ is false, a counterexample lives in the finite Coxeter group of type $D_4$, see \cite[p.3192]{pilkington_convex_2006}.  If $A$ is infinite, the converse of any of the implications in $(ii)$ is not true nor is the converse of $(iii)$, as it is shown in \S\ref{se:OnDef}.
However, those equivalences  holds if $A$ is finite as stated in the following  proposition.

\begin{prop}\label{prop:FiniteBiclos} Let $A\subseteq \Phip$ be finite. Then the following assertions are equivalent:
\begin{enumerate}[(i)]
\item $A=N(w)$ for some $w\in W$;
\item  $A$ is biclosed;
\item $A$ is biconvex;
\item $A$ is separable;
\item There exists a hyperplane $H$ such that $A$ is strictly on one side of $H$ and $A^c$ is strictly on the other side of $H$.
\end{enumerate}
\end{prop}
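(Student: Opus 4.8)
The plan is to prove the five statements equivalent by assembling the tools already available and supplying one geometric construction. From Proposition~\ref{prop:biclosed}, the image of $N$ is exactly the family $\mathcal B_\circ$ of finite biclosed sets, so $(ii)\Rightarrow(i)$ holds. Lemma~\ref{lem:SepaConvClos}(ii) gives $(iv)\Rightarrow(iii)\Rightarrow(ii)$ for free (these links do not even need finiteness). The remaining steps I would establish are $(i)\Rightarrow(v)$ and $(v)\Rightarrow(iv)$, which then close the cycle $(i)\Rightarrow(v)\Rightarrow(iv)\Rightarrow(iii)\Rightarrow(ii)\Rightarrow(i)$, so all five are equivalent.

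For $(v)\Rightarrow(iv)$: if $H=\ker\rho$ has $A$ strictly on one side and $A^c$ strictly on the other, then $\Phi^+=A\sqcup A^c$ forces no root onto $H$, so $A=\{\beta\in\Phi^+\mid\rho(\beta)>0\}=\{\beta\in\Phi^+\mid\rho(\beta)\ge 0\}$ is precisely of the form handled in Lemma~\ref{lem:SepaConvClos}(iii), hence separable; equivalently, $\cone(A)\setminus\{0\}$ and $\cone(A^c)\setminus\{0\}$ lie in the two opposite open halfspaces cut out by $H$ and are therefore disjoint.

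The heart of the argument is $(i)\Rightarrow(v)$, and this is where the hypothesis really enters, through the fact that a finite biclosed set is $N(w)$ for a genuine element $w\in W$. Since $\Delta$ is positively linearly independent, $\cone(\Delta)$ is a pointed polyhedral cone, so there is a linear form $\rho_0$ on $V$ with $\rho_0(v)>0$ for every $v\in\cone(\Delta)\setminus\{0\}$ (take $\rho_0$ in the interior of the dual cone, working inside $\Span(\Phi)$ and extending arbitrarily to $V$ if $\cone(\Delta)$ is not full dimensional). Because every root satisfies $B(\beta,\beta)=1\ne 0$, this yields $\rho_0>0$ on $\Phi^+$ and $\rho_0<0$ on $\Phi^-$. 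Now set $\rho:=\rho_0\circ w^{-1}$, a nonzero linear form since $w^{-1}$ is invertible, and $H:=\ker\rho$. If $\beta\in N(w)=\Phi^+\cap w(\Phi^-)$ then $w^{-1}(\beta)\in\Phi^-$, so $\rho(\beta)=\rho_0(w^{-1}\beta)<0$; if $\gamma\in\Phi^+\setminus N(w)$ then $w^{-1}(\gamma)\in\Phi\setminus\Phi^-=\Phi^+$, so $\rho(\gamma)=\rho_0(w^{-1}\gamma)>0$. Thus $N(w)$ lies strictly in $\{\rho<0\}$ and $N(w)^c$ strictly in $\{\rho>0\}$, which is exactly $(v)$.

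I expect the only point requiring a moment's care to be the existence of $\rho_0$: one must observe that positive linear independence of $\Delta$ makes $\cone(\Delta)$ pointed and that a pointed polyhedral cone admits a linear form strictly positive off the origin (restricting to $\Span(\Phi)$ if needed so that the cone is full dimensional there, which also takes care of the fact that $\Phi^+$ may be infinite). After that, the verification that $\rho_0\circ w^{-1}$ separates is bookkeeping with the decomposition $\Phi=\Phi^+\sqcup\Phi^-$, and the rest of the cycle is quoting Proposition~\ref{prop:biclosed} and Lemma~\ref{lem:SepaConvClos}. For completeness I would note that $(i)\Rightarrow(v)$ can alternatively be obtained by induction on $\ell(w)$, using the recursion $N(sw)=\{\alpha_s\}\sqcup s(N(w))$ valid when $\ell(sw)=\ell(w)+1$ and conjugating a separating form by $s$, but the direct construction above avoids the case analysis.
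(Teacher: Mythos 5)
Your proposal is correct and follows essentially the same route as the paper: both reduce everything to $(i)\Rightarrow(v)$ via Proposition~\ref{prop:biclosed} and Lemma~\ref{lem:SepaConvClos}, and both obtain $(v)$ by using the pointedness of $\cone(\Delta)$ to separate $\Phip$ from $\Phi^-$ by a hyperplane $H_0$ and then transporting it by $w$ (your $\rho:=\rho_0\circ w^{-1}$ is exactly the paper's $H=w(H_0)$). The only cosmetic difference is that you phrase the separation with an explicit linear functional and spell out $(v)\Rightarrow(iv)$, which the paper leaves to Lemma~\ref{lem:SepaConvClos}$(iii)$.
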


This proposition is a reformulation of~\cite[Proposition 11.6]{dyer_weak_2011} as well as  the beginning of its proof. We give here a proof for completeness. For further discussions about closed and convex sets on different types of finite root systems, see~\cite{pilkington_convex_2006}. 

\begin{proof} By Proposition~\ref{prop:biclosed} and Lemma~\ref{lem:SepaConvClos} we only have to show that $(i)$ implies~{{$(v)$}}. Take $w\in W$ and write $A=N(w)$.  Note that $\Delta$ is finite so $\cone(\Delta)$ is a simplicial convex cone. Since $\Delta$ is  a simple system, we have $\cone(\Delta)\cap-\cone(\Delta)=\{0\}$.  Observe that $\cone(\Delta)$ is pointed at $0$, so a supporting hyperplane  $H_0$ of the face $0$ separates strictly $\cone(\Delta)\setminus\{0\}$ and $-\cone(\Delta)\setminus\{0\}$ (see for instance~\cite{webster_convexity_1994,ziegler_lectures_1995} for further information on cones and polytopes).

Therefore  the sets~$\Phip$  and $\Phi^-$ are  strictly separated by  $H_0$, since $\Phi^\pm=\pm\cone_\Phi(\Delta)$. Therefore the hyperplane $H=w(H_0)$ strictly separates  $A=N(w)$ and $A^c$ since $N(w)\subseteq w(\Phi^-)$ and $\Phip\setminus N(w)\subseteq w(\Phip)$. 
 \end{proof}

\begin{ex}
	Let $(W,S)=(A_2,\{s_1,s_2\})$. Figure~\ref{fig:example_geometry} shows the corresponding root system from which the biclosed, biconvex and separable sets can be easily obtained.
	\begin{figure}[!htbp]
		\begin{center}
			\begin{tikzpicture}
				[scale=1,
				pointille/.style={dashed},
				axe/.style={color=black, very thick}]
				
				\coordinate (O) at (0,0);
				\fill (O) circle (0.05);

				\draw[axe,->] (O) -- (0:1) node[label=right:{$\alpha_{s_1}$}] {};
				\draw[axe,->] (O) -- (60:1) node[label=above right:{$\alpha_{s_1}+\alpha_{s_2}$}] {};
				\draw[axe,->] (O) -- (120:1) node[label=above left:{$\alpha_{s_2}$}] {};
				\draw[axe,->] (O) -- (180:1) node[label=left:{$-\alpha_{s_1}$}] {};
				\draw[axe,->] (O) -- (240:1) node[label=below left:{$-\alpha_{s_1}-\alpha_{s_2}$}] {};
				\draw[axe,->] (O) -- (300:1) node[label=below:{$-\alpha_{s_2}$}] {};
				
				\draw[dotted] (150:1.5) -- (-30:1.5) node[label=right:{$H$}] {};
				
			\end{tikzpicture}
		\end{center}
		\caption[Root system of type $A_2$]{\label{fig:example_geometry} The root system of type $A_2$. The inversion sets $\varnothing,$ $\{\alpha_{s_1}\},$ $\{\alpha_{s_2}\},$ $\{\alpha_{s_1},\alpha_{s_1}+\alpha_{s_2}\},$ $\{\alpha_{s_2},\alpha_{s_1}+\alpha_{s_2}\}$ and $\{\alpha_{s_1},\alpha_{s_2},\alpha_{s_1}+\alpha_{s_2}\}$ are the biclosed, biconvex and separable sets of $\Phip$.}
	\end{figure}
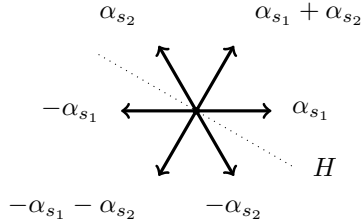
\end{ex}

The following corollary is  due to Dyer~\cite[Proposition 11.6]{dyer_weak_2011}.

\begin{cor}\label{cor:JoinFinite}  Let $A\subseteq \Phip$ such that $\overline{A}$ is a finite biclosed set. Then
$$\overline{A}= \cone_\Phi(A).$$
\end{cor}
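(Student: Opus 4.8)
The plan is to prove the two inclusions separately, noting that one of them holds for \emph{any} subset $A$ and that the hypothesis on $\overline{A}$ is needed only for the other.

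First I would record the elementary fact that $\cone_\Phi(A)$ is a closed subset of $\Phip$ containing $A$, for any $A\subseteq\Phip$, without using the hypothesis. Indeed, if $\gamma_1,\gamma_2\in\cone_\Phi(A)$ then $\gamma_1,\gamma_2\in\cone(A)$, and since $\cone(A)$ is a convex cone every non-negative combination $a\gamma_1+b\gamma_2$ again lies in $\cone(A)$; intersecting with $\Phi$ gives $\cone(\gamma_1,\gamma_2)\cap\Phi\subseteq\cone(A)\cap\Phi=\cone_\Phi(A)$, so $\cone_\Phi(A)$ is closed. Moreover $\cone_\Phi(A)\subseteq\Phip$, because $A\subseteq\cone(\Delta)$ forces $\cone(A)\subseteq\cone(\Delta)$ and hence $\cone(A)\cap\Phi\subseteq\cone(\Delta)\cap\Phi=\Phip$. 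Since $A\subseteq\cone_\Phi(A)$ and $\overline{A}$ is by definition the smallest closed subset of $\Phip$ containing $A$, this yields $\overline{A}\subseteq\cone_\Phi(A)$.

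For the reverse inclusion I would invoke Proposition~\ref{prop:FiniteBiclos}: by hypothesis $\overline{A}$ is a \emph{finite} biclosed set, hence it is convex, i.e.\ $\overline{A}=\cone_\Phi(\overline{A})$. From $A\subseteq\overline{A}$ we get $\cone(A)\subseteq\cone(\overline{A})$, whence $\cone_\Phi(A)\subseteq\cone_\Phi(\overline{A})=\overline{A}$. Combining the two inclusions gives $\overline{A}=\cone_\Phi(A)$.

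There is no genuine obstacle here; the only point worth flagging is the interplay between the two closure-type notions. The notion of closed set is strictly weaker than that of convex set (a closed set need not be convex), but $\cone_\Phi(A)$ is automatically closed for trivial reasons, so the content of the corollary is exactly that, once $\overline{A}$ happens to be finite, it is \emph{forced} to be convex — which is precisely the equivalence (ii)$\Leftrightarrow$(iii) of Proposition~\ref{prop:FiniteBiclos} — and therefore coincides with $\cone_\Phi(A)$. If one preferred not to cite Proposition~\ref{prop:FiniteBiclos} as a black box, one could instead re-run the separating-hyperplane argument from its proof to show directly that a finite biclosed set $B$ satisfies $\cone_\Phi(B)=B$, but the argument above is cleaner.
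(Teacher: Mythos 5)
Your proposal is correct and follows essentially the same route as the paper: one inclusion from the minimality of the $2$-closure once $\cone_\Phi(A)$ is known to be closed (the paper cites Lemma~\ref{lem:SepaConvClos} where you verify closedness directly, a harmless difference), and the other from Proposition~\ref{prop:FiniteBiclos} giving biconvexity of the finite biclosed set $\overline{A}$. No gaps.
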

\begin{proof} On one hand $A':=\cone_\Phi(A)$ is convex by definition, hence closed by Lemma~\ref{lem:SepaConvClos}. So by minimality $\overline{A}\subseteq A'$.  On the other hand, we have $A\subseteq \overline{A}$. Thus
$
\cone(A)\subseteq \cone(\overline{A}).
$
Since $\overline{A}$ is finite biclosed, it is biconvex by Proposition~\ref{prop:FiniteBiclos}. So $\cone_\Phi(\overline{A})=\overline{A}$ and $A'\subseteq \overline{A}$.
\end{proof}

\begin{prop}\label{prop:JoinFinite} Let $X$ be a bounded subset of $W$. Then the join $\bigvee X$ exists and 
$$
N(\bigvee X)= \overline{N(X)}=\cone_\Phi(N(X)).
$$
\end{prop}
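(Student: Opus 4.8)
The plan is to combine Dyer's characterization of the join (Theorem~\ref{thm:Dyer}) with the geometric description of the $2$-closure of a bounded set (Corollary~\ref{cor:JoinFinite}), reducing everything to checking that the relevant closures coincide. Concretely, since $X$ is bounded in $W$, statement (ii) of Theorem~\ref{thm:Dyer} holds, hence (i) holds and $\bigvee X$ exists; moreover the theorem also gives $N(\bigvee X)=\overline{N(X)}$. This already establishes the first equality in the displayed formula, so the only thing left to prove is the second equality $\overline{N(X)}=\cone_\Phi(N(X))$.

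For the second equality I would invoke Corollary~\ref{cor:JoinFinite} with $A=N(X)$. To apply it I need to know that $\overline{N(X)}$ is a \emph{finite} biclosed set. Finiteness follows from the fact that $X$ is bounded: by the remark in the definition of boundedness, $X$ is necessarily finite, say $X=\{x_1,\dots,x_k\}$, so $N(X)=\bigcup_i N(x_i)$ is a finite union of finite sets, hence finite; and $\overline{N(X)}=N(\bigvee X)$ is the inversion set of an element of $W$, hence by Proposition~\ref{prop:biclosed} it is a finite biclosed set. Thus the hypothesis of Corollary~\ref{cor:JoinFinite} is met, and the corollary yields $\overline{N(X)}=\cone_\Phi(N(X))$, completing the proof.

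Here is the proof I would write.

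\begin{proof}
Since $X$ is bounded in $W$, statement~(ii) of Theorem~\ref{thm:Dyer} holds, hence so does~(i): the join $\bigvee X$ exists, and the last assertion of Theorem~\ref{thm:Dyer} gives $N(\bigvee X)=\overline{N(X)}$. This proves the first equality. For the second one, note that a bounded subset of $W$ is finite (there are only finitely many prefixes of an upper bound~$g$), so $N(X)=\bigcup_{x\in X}N(x)$ is a finite set. Moreover $\overline{N(X)}=N(\bigvee X)$ is the inversion set of an element of $W$, so by Proposition~\ref{prop:biclosed} it is a finite biclosed set. We may therefore apply Corollary~\ref{cor:JoinFinite} to $A=N(X)$ and obtain $\overline{N(X)}=\cone_\Phi(N(X))$.
\end{proof}

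There is essentially no obstacle here: the statement is a direct synthesis of the two preceding results, and the only point requiring a word of justification is that $\overline{N(X)}$ is finite and biclosed, which is immediate once one observes that it equals $N(\bigvee X)$. If one wished to avoid citing the ``moreover'' clause of Theorem~\ref{thm:Dyer} twice, one could alternatively note that $N(X)$ is bounded in $\Phip$ (it is contained in $N(g)$ for any upper bound $g$ of $X$), so its $2$-closure is contained in the finite biclosed set $N(g)$ and is therefore itself finite; the argument is the same in spirit.
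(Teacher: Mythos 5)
Your proof is correct and follows exactly the paper's route: the paper dismisses this proposition as an immediate consequence of Theorem~\ref{thm:Dyer} and Corollary~\ref{cor:JoinFinite}, which is precisely the combination you use. Your extra verification that $\overline{N(X)}=N(\bigvee X)$ is a finite biclosed set (so that Corollary~\ref{cor:JoinFinite} applies) is the right detail to spell out and is accurate.
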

\begin{proof} This is an immediate consequence of Theorem~\ref{thm:Dyer} and Corollary~\ref{cor:JoinFinite}.
\end{proof}

\begin{remark} 
\begin{enumerate}
\item Such a formula to compute the join in finite Coxeter groups was originally proven by Bj\"orner, Edelman and Ziegler in \cite[Theorem~5.5]{BjEdZi90}, as a special case of their study of `Hyperplane arrangements with a lattice of regions'. It was then extended to infinite Coxeter groups by Dyer in~\cite[Proposition 11.6]{dyer_weak_2011}. 
\item The second author (JPL) shows in his thesis~\cite[Corollary~2.36]{labbe_polyhedral_2013} that Conjecture~\ref{conj:Dyer} is true for rank $3$ Coxeter system (i.e. $|S|=3$) if we consider biconvex sets instead of biclosed sets. In this case, the join of two  biconvex sets is given by the conic closure, as in Proposition~\ref{prop:JoinFinite}. The authors tried in vain to prove or disprove that, for Coxeter groups of rank $3$ or affine Coxeter groups, biclosed sets are all biconvex sets and that the $2$-closure can be replaced by the conic closure in Conjecture~\ref{conj:Dyer}.  However, this is no longer true if the Coxeter group is indefinite (not finite nor affine) of rank~$\geq 4$ as  shown in~\cite[Section~2.4.2]{labbe_polyhedral_2013}.
\end{enumerate}
\end{remark}

\section{Existence of the join and the imaginary cone} \label{sse:GeomCharac}

\subsection{Projective representation and normalized roots}\label{sse:Normal} \smallskip In this article we also refer to the  {\em projective representation} of $W$ and to the associated {\em normalized root system} $\h \Phi$. Since the root system is encoded by the set of positive roots $\Phip$, we represent the root system by an ``affine cut'' of $\Phip$. This projective representation has nice consequences for the study of infinite Coxeter groups, as explained in \cite{hohlweg_asymptotical_2014,dyer_imaginary2_2013}. It is especially useful for easily visualizing infinite root systems and to work out examples of rank $|S|=2,3,4$ easily. It works as follows: there is an affine hyperplane $V_1$ in $V$ {\em transverse to $\Phip$}, i.e., for any $\beta\in \Phip$, the ray $\mathbb R^+\beta$ intersects $V_1$ in a unique nonzero point $\h\beta$. So $\mathbb R\beta\cap V_1=\{\h\beta\}$ for any $\beta\in\Phi$. The {\em set of normalized roots}, which is a projective view of $\Phi$, is
$$
\h\Phi=\{\h\beta\,|\, \beta\in \Phi\},
$$
see for instance~\cite[Figures~2~and~3]{hohlweg_asymptotical_2014}. Observe that $\h\Phi$ is contained in the polytope $\conv(\h\Delta)$. If $W$ is infinite, so is $\h\Phi$, and thus $\h\Phi$ admits a set $E$ of accumulation points that we call {\em the set of limit roots}, see \cite[Figures 4--7]{hohlweg_asymptotical_2014}\cite[Appendix~A]{labbe_polyhedral_2013}\cite{chen_limit_2014}.

Assuming $(W,S)$ to be irreducible, we have   $E=\varnothing$ if and only if $W$ is finite; $E$ is a singleton if and only if $(W,S)$ is affine and irreducible. Moreover, limit roots are in the isotropic cone $Q$ of $B$:
$$
E\subseteq \h Q=\{x\in V_1\,|\, B(x,x)=0\}.
$$
A nice observation is that for any roots $\alpha,\beta\in\Phip$, the dihedral reflection subgroup generated by $s_\alpha,s_\beta$ is finite if and only if the line  $L(\h\alpha,\h\beta)$ through $\h\alpha$ and $\h\beta$ is such that  $L(\h\alpha,\h\beta)\cap Q=\varnothing$. Otherwise $L(\h\alpha,\h\beta)$ intersects $Q$ in one or two points and contains an infinite number of normalized roots. 

In this framework, conic closure is replaced by convex hull and $A\subseteq \Phip$ is replaced by $\h A \subseteq \h\Phi$. For instance, this gives
for $A\subseteq \Phip$.
\begin{itemize}
\item  The set $A$ is {\em closed} if for any $\alpha,\beta\in A$, the normalized roots in the segment $[\h\alpha,\h\beta]$ are all contained in $\h A$.
\item The set $A$ is {\em convex} if $\h A=\conv_\Phi(\h A)=\conv(\h A)\cap\h\Phi$.
\item The set $A$ is {\em separable} if $\conv(\h A)\cap\conv(\h A^c)=\varnothing$.
\end{itemize}

We write $\h N (w)$ instead  of $\h{N(w)}$ and we refer to the map $\h N:W\to \h{\mathcal B}$,  where $\h{\mathcal B}$ is the set of normalized biclosed sets. We omit the notation $\,\h{\cdot}\,$ in figures. Within this setting, Proposition~\ref{prop:JoinFinite} translates as follows (see Figure~\ref{fig:AffineC} for an illustration): 

\begin{prop} Let $X$ be a bounded subset of $W$. Then the join $\bigvee X$ exists and 
$
\h N(\bigvee X)= \conv_\Phi(\h N(X)).
$
\end{prop}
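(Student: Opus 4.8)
The plan is to obtain this statement as a direct translation of Proposition~\ref{prop:JoinFinite} through the projective representation described in \S\ref{sse:Normal}. Recall that the projective representation is built from an affine hyperplane $V_1$ transverse to $\Phip$, so that each positive root $\beta$ corresponds bijectively to its normalized version $\h\beta \in V_1$ via $\mathbb R\beta\cap V_1=\{\h\beta\}$. The crucial observation is that this normalization is the restriction to $\Phip$ of the projection $V\setminus H_1 \to V_1$ along rays through the origin (where $H_1$ is the linear hyperplane parallel to $V_1$), and that $\Phip$ lies entirely in the open halfspace on one side of $H_1$ by transversality. Consequently, for any subset $A\subseteq \Phip$, a positive root $\gamma$ lies in $\cone(A)$ if and only if $\h\gamma$ lies in $\conv(\h A)$: the ``only if'' direction follows because writing $\gamma=\sum_i c_i \alpha_i$ with $c_i\geq 0$ and $\alpha_i\in A$, normalizing both sides and using that all vectors lie strictly on one side of $H_1$ turns the conic combination into a convex combination (after rescaling by the positive quantity $\rho(\gamma)/\sum_i c_i\rho(\alpha_i)$, where $\rho$ is the defining linear form of $H_1$); the ``if'' direction is the same computation run backwards.

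The second step is to apply this translation to the equality $N(\bigvee X)=\cone_\Phi(N(X))$ furnished by Proposition~\ref{prop:JoinFinite}. Since $X$ is bounded, that proposition guarantees $\bigvee X$ exists and the equality holds. Normalizing both sides and using the bijection $\beta\mapsto\h\beta$ together with the claim from the previous paragraph, we get
\begin{equation*}
\h N(\bigvee X) = \h{\cone_\Phi(N(X))} = \conv(\h{N(X)})\cap\h\Phi = \conv_\Phi(\h N(X)),
\end{equation*}
where the middle equality uses that $\cone_\Phi(A)=\cone(A)\cap\Phi$ consists of positive roots only (since $N(X)\subseteq\cone(\Delta)$, hence $\cone(N(X))\subseteq\cone(\Delta)$, hence $\cone_\Phi(N(X))\subseteq\Phip$ as noted before Definition~2.9), so applying $\h{\cdot}$ commutes with taking $\cone_\Phi$ in the appropriate sense. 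This gives exactly the asserted formula.

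I do not expect any serious obstacle here: the statement is explicitly flagged in the text as the translation of Proposition~\ref{prop:JoinFinite} into the projective setting, and the only content is the elementary linear-algebra fact that the radial projection onto a transverse affine hyperplane carries conic hulls to convex hulls when everything sits strictly on one side of the parallel linear hyperplane. The one point that requires a line of care is making sure $\h N(X)$ is interpreted consistently, i.e. $\h N(X)=\{\h\beta \mid \beta\in N(X)\}=\bigcup_{x\in X}\h N(x)$, which is immediate from the abuse of notation $N(X)=\bigcup_{x\in X}N(x)$ fixed before Theorem~\ref{thm:Dyer}. A short proof along these lines — stating the conic-hull-to-convex-hull correspondence as a sentence and then invoking Proposition~\ref{prop:JoinFinite} — suffices.
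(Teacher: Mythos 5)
Your proposal is correct and follows exactly the route the paper intends: the paper states this proposition without proof as the translation of Proposition~\ref{prop:JoinFinite} into the projective setting, and your radial-projection argument (conic combinations of positive roots become convex combinations of normalized roots because $\Phip$ lies strictly on one side of the linear hyperplane parallel to $V_1$) is precisely the elementary fact being invoked. The only nitpick is the parenthetical rescaling factor $\rho(\gamma)/\sum_i c_i\rho(\alpha_i)$, which equals $1$; the correct convex coefficients are $c_i\rho(\alpha_i)/\rho(\gamma)$, but this does not affect the validity of the argument.
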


\subsection{A geometric criterion for the existence of the join in $W$}

A very useful feature studied in \cite{dyer_imaginary_2013,dyer_imaginary2_2013} is the {\em imaginary convex body} $\conv(E)$ which is shown to be a compact set~\cite[Theorem 2.2]{dyer_imaginary2_2013}. This lead us to the following result, which is a geometric generalization of Theorem~\ref{thm:Dyer}. Recall that, by abuse of notation, we write  $N(X):=\bigcup_{x\in X}N(x)$.

\begin{thm}\label{thm:Main1} Let $X\subseteq W$. Then the following statements are equivalent:
\begin{enumerate}[(i)]
\item $\bigvee X$ exists;
\item $X$ is bounded;
\item $X$ is finite and $\conv(\h N(X))\cap\conv(E)=\varnothing$. 
\item $X$ is finite and the convex set $\conv(\h N(X))$ and  the imaginary convex body $\conv(E)$ are strictly separated by an hyperplane.   
\end{enumerate}
In this case: $N(\bigvee X)= \cone_\Phi(N(X))$. 
\end{thm}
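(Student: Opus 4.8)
The plan is to establish the cycle of equivalences by combining Dyer's theorem (Theorem~\ref{thm:Dyer}) with the geometry of the imaginary convex body $\conv(E)$. The equivalence $(i)\Leftrightarrow(ii)$ is already Theorem~\ref{thm:Dyer}, and $(iv)\Rightarrow(iii)$ is trivial, so the real content lies in showing $(ii)\Rightarrow(iv)$ and $(iii)\Rightarrow(ii)$. Since the three conditions $(ii)$, $(iii)$, $(iv)$ all include or imply that $X$ is finite (for $(ii)$ this is noted in the definition of bounded; for $(iii)$ and $(iv)$ it is explicitly assumed), I may freely assume $X$ finite throughout and set $A:=N(X)=\bigcup_{x\in X}N(x)$, a finite subset of $\Phip$.

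For $(ii)\Rightarrow(iv)$: if $X$ is bounded, then by Theorem~\ref{thm:Dyer} (or Proposition~\ref{prop:JoinFinite}) the $2$-closure $\overline{A}$ is a finite biclosed set equal to $N(\bigvee X)$, and by Proposition~\ref{prop:FiniteBiclos} it is separable, hence by Corollary~\ref{cor:JoinFinite} we have $\overline{A}=\cone_\Phi(A)$. A finite biclosed set $B=N(w)$ is, again by Proposition~\ref{prop:FiniteBiclos}, strictly separated from $B^c$ by a hyperplane; more precisely the proof there shows $B\subseteq w(\Phi^-)$ and $B^c\subseteq w(\Phip)$ so that the hyperplane $w(H_0)$ works. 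The key extra input I need is that $\conv(E)$ lies (strictly, away from its boundary touching $\h\Phi$) on the $\Phip$-side: since $E\subseteq \h Q$ and limit roots are limits of normalized roots lying in $\conv(\h\Delta)=\conv(\h{\cone_\Phi(\Delta)})$, one checks that $w(H_0)$ — which strictly separates $w(\Phip)$ from $w(\Phi^-)$, hence $\h{\,\overline{A}\,}$ from $\h{(\overline{A})^c}$ — in fact strictly separates $\conv(\h A)\subseteq \conv(\h{\,\overline A\,})$ from $\conv(E)$, because $E$ accumulates only from normalized roots outside $\overline{A}$ when $\overline A$ is a finite set (a finite set of normalized roots has no accumulation points, so $E\subseteq\h{(\overline A)^c}$ after passing to closures and using compactness of $\conv(E)$). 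This last point is where I expect to invoke compactness of $\conv(E)$ from~\cite[Theorem~2.2]{dyer_imaginary2_2013} to upgrade "disjoint convex, one compact" to "strictly separated by a hyperplane".

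For $(iii)\Rightarrow(ii)$, the contrapositive is cleanest: suppose $X$ is finite but $X$ is not bounded, i.e. $\bigvee X$ does not exist. I want to conclude $\conv(\h N(X))\cap\conv(E)\neq\varnothing$. By Theorem~\ref{thm:Dyer}, $A=N(X)$ is not bounded in $\Phip$, meaning no finite biclosed set contains $A$; equivalently, by Proposition~\ref{prop:FiniteBiclos} and Corollary~\ref{cor:JoinFinite}, the $2$-closure $\overline A$ is infinite. The strategy is to show that an infinite $2$-closure of a finite set must "reach" the imaginary cone: an infinite set of normalized roots has an accumulation point, which is a limit root, and one argues this limit root is forced to lie in $\conv(\h A)$ because the infinite roots generated in $\overline A$ are built from the finitely many roots of $A$ via the closure operation, which geometrically stays inside $\conv(\h A)$ (closed sets are closed under taking normalized roots on segments, so $\h{\overline A}\subseteq\conv(\h A)$, and hence its accumulation points, lying in $E$, are in the compact set $\conv(\h A)=\conv(\h N(X))$). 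This yields a point of $\conv(\h N(X))\cap\conv(E)$, contradicting $(iii)$.

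The final formula $N(\bigvee X)=\cone_\Phi(N(X))$ then follows immediately from Proposition~\ref{prop:JoinFinite} once any of the equivalent conditions holds. The main obstacle I anticipate is the careful handling of the closure/accumulation argument in $(iii)\Rightarrow(ii)$: one must argue rigorously that the $2$-closure $\overline A$ of a finite set $A$ satisfies $\h{\overline A}\subseteq\conv(\h A)$ (which should follow by transfinite induction on the construction of the closure, using that each closure step adds roots lying on segments between already-present normalized roots, hence inside the convex hull), and that if $\overline A$ is infinite then $\overline A$ genuinely has a limit root \emph{in} that convex hull rather than merely on its boundary in a way that might escape $\conv(E)$; here compactness of $\conv(\h A)$ and of $\conv(E)$, together with $E\subseteq\h Q$, must be combined. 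A secondary subtlety is ensuring the separation in $(iv)$ is \emph{strict}, for which I rely on $\conv(E)$ being compact and disjoint from the finite (hence compact) convex polytope $\conv(\h N(X))$, invoking the standard separation theorem for a compact convex set and a disjoint closed convex set.
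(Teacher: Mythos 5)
Your handling of $(i)\Leftrightarrow(ii)$, of $(ii)\Rightarrow(iv)$, and of the final formula is essentially the paper's own argument and is sound. The genuine gap is in $(iii)\Rightarrow(ii)$: your contrapositive opens with ``$X$ unbounded, hence no finite biclosed set contains $A=N(X)$, equivalently $\overline A$ is infinite.'' That ``equivalently'' is unjustified. The implication you need, ``$X$ unbounded $\Rightarrow\overline A$ infinite,'' is the contrapositive of ``$\overline A$ finite $\Rightarrow\bigvee X$ exists,'' which is exactly Dyer's open Question~\ref{conj:2} recalled in the paper (and its conic variant, Question~\ref{conj:3}). The obstruction is that $\overline A$ is closed by construction but need not be coclosed; a finite closed set need not be biclosed (in type $A_2$ the set $\{\alpha_{s_1}+\alpha_{s_2}\}$ is closed but not coclosed), so neither Proposition~\ref{prop:FiniteBiclos} nor Corollary~\ref{cor:JoinFinite} applies to $\overline A$ until you already know it is biclosed --- which is what is at stake. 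The remainder of your argument correctly shows $\h{\overline A}\subseteq\conv(\h A)$ and hence ``$\overline A$ infinite $\Rightarrow(iii)$ fails,'' but that only yields ``$(iii)\Rightarrow\overline A$ finite,'' which does not return you to $(ii)$.

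The fix --- and the paper's route --- is to prove $(iv)\Rightarrow(i)$ instead, after obtaining $(iii)\Leftrightarrow(iv)$ from Hahn--Banach separation of the two disjoint compact convex sets (a polytope and the compact body $\conv(E)$), which you already observe. Given a hyperplane $H$ strictly separating $\conv(\h N(X))$ from $\conv(E)$, set $B:=H^+\cap\Phip$, where $H^+$ is the open half-space containing $N(X)$. Then $B$ is separable, hence biclosed, by Lemma~\ref{lem:SepaConvClos}$(iii)$ --- this is the step your route lacks, since it manufactures a finite set that is genuinely \emph{biclosed} rather than merely closed --- and $B$ is finite because any accumulation point of $\h B$ would lie in $E\cap(\h H^+\cup\h H)=\varnothing$. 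So $B$ is a finite biclosed set containing $N(X)$, whence $N(X)$ is bounded and $\bigvee X$ exists by Theorem~\ref{thm:Dyer}.
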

\begin{proof} The equivalence between $(i)$ and $(ii)$ is Dyer's Theorem~\ref{thm:Dyer}. The equivalence between $(iii)$ and $(iv)$ is a consequence of the Hahn-Banach Separation Theorem. Indeed, since $X$ is finite, $\conv(\h N(X))$ is a polytope, hence compact. Since both convex sets $\conv(E)$ and $\conv(\h N(X))$ are compact, then $\conv(\h N(X))\cap\conv(E)=\varnothing$ if and only if there is an hyperplane $H_1$ in $V_1$ that separates strictly $\conv(E)$ and $\conv(\h N(X))$, by  the Hahn-Banach Separation Theorem; See for instance~\cite[Theorem 2.4.10]{webster_convexity_1994}.

Assume $(i)$: write $A=\cone_\Phi(N(X))$. Since $\bigvee X$ exists, the set $X$ is finite and $N(\bigvee X)= A$ by Proposition~\ref{prop:JoinFinite}. By Proposition~\ref{prop:FiniteBiclos}, there is a hyperplane $H$ separating strictly  $A=\cone_\Phi(N(x))$  and $A^c$. That means, if $H^+$ (resp. $H^-$) denotes the open half-space with boundary $H$ containing $A$ (resp. $A^c$), then $\Phip\cap H^+=A$. We now show that $E\cap \h H^+=\varnothing$. Indeed, otherwise there is $x\in E\cap \h H^+$. Since $\h H^+$ is open, there is a neighbourhood of $x$ in $\h H^+$, which contains an infinite number of normalized roots.  This means that there is an infinite number of roots in $H^+\cap \Phip=A$,  contradicting the fact that $A$ is finite. Thus $E$ is contained in the closed half-space $\h H\cup \h H^-$, so is $\conv(E)$. Since $\conv(\h A)\subseteq \h H^+$, we conclude that  $\conv(\h N(X))\cap\conv(E)=\varnothing$. So this proves that both $(iii)$ and $(iv)$ are true.

Assume $(iv)$ to be true. Let $H_1$ be the affine hyperplane separating strictly $\conv(E)$ and $\conv(\h N(X))$. Let $H$ be the  linear extension of $H_1$ in $V$. Set $H^+$ to be the open half space bounded by $H$ and containing $N(X)$. So $H^-=V\setminus(H\cup H^+)$ is the open half space bounded by~$H$ and containing $\conv(E)$, hence $E$. The set $A:=H^+\cap \Phip$ is therefore biclosed by Lemma~\ref{lem:SepaConvClos}$(iii)$. Assume $A$ is infinite: since $\h A\subseteq \h\Phi$ is discrete, its accumulation points  lie in $(H^+\cup H)\cap E=\varnothing$ a contradiction. So $A$ is a finite biclosed set containing $N(X)$ by definition. So $N(X)$ is bounded in $\Phip$ and therefore $\bigvee X$ exists by Theorem~\ref{thm:Dyer}.

\end{proof}

We now give an example of application of Theorem~\ref{thm:Main1}.  

\begin{ex} Consider the affine Coxeter group of type $\tilde C_2$ given by the following Coxeter graph:
\begin{center}
\begin{tikzpicture}[sommet/.style={inner sep=2pt,circle,draw=blue!75!black,fill=blue!40,thick}]
	\node[sommet,label=below:$s_\alpha$] (alpha) at (0,0) {};
	\node[sommet,label=below:$s_\beta$] (beta) at (1,0) {} edge[thick] node[auto,swap] {4} (alpha);
	\node[sommet,label=below:$s_\gamma$] (gamma) at (2,0) {} edge[thick] node[auto,swap] {4} (beta);
	
\end{tikzpicture}
\end{center}
 We illustrate the normalized root system (with finitely many roots drawn) in Figure~\ref{fig:AffineC}. 
Here $E=\h Q=\{\delta\}$ is a singleton represented by a red dot in the center of Figure~\ref{fig:AffineC}.  Consider $X=\{s_\alpha,s_\gamma s_\beta\}$. Then 
\[
	N(X)=N(s_\alpha)\cup N(s_\gamma s_\beta)=\{\alpha,\gamma,s_\gamma(\beta)\}.
\]
We see that  $\delta\notin \conv_\Phi(\h N(X))$, so by Theorem~\ref{thm:Main1} the join $g=s_\alpha\vee s_\beta s_\gamma$ exists and its  inversion set is equal to $N(g)=\cone_\Phi(\alpha,\gamma,s_\gamma(\beta))$ (the blue triangle at the bottom of  Figure~\ref{fig:AffineC}  that contains exactly 5 roots).  Since $\gamma\in N(g)$, we know that there is $g'\in W$ such that  $g=s_\gamma g'$ is reduced. By  Proposition~\ref{prop:weakRoot}(ii), we have therefore $N(g)=\{\gamma\}\sqcup s_\gamma(N(g'))$;
 in Figure~\ref{fig:AffineC}, the normalized version of the set $s_\gamma(N(g'))$ is constituted of the roots in the blue triangle  but excluding $\gamma$. We represented all roots in this triangle and we see that the remaining normalized roots are on the segment between $\h\alpha$ and $\h{s_\gamma (\beta)}$.  Now, we see that $\alpha$ is in $N(g')$, by proceeding with $g'$ and $\alpha\in N(g')$ the same way as we did above with $\gamma$ and $g$, we obtain recursively that  $g=s_\gamma (s_\alpha s_\beta)^2$.

However, $\delta\in \conv(\h\beta,\h\alpha,\h{s_\beta(\gamma)})$ (the orange triangle at the top of  Figure~\ref{fig:AffineC})  so the join of $s_\alpha$ and $s_\beta s_\gamma$ does not exist.  
 
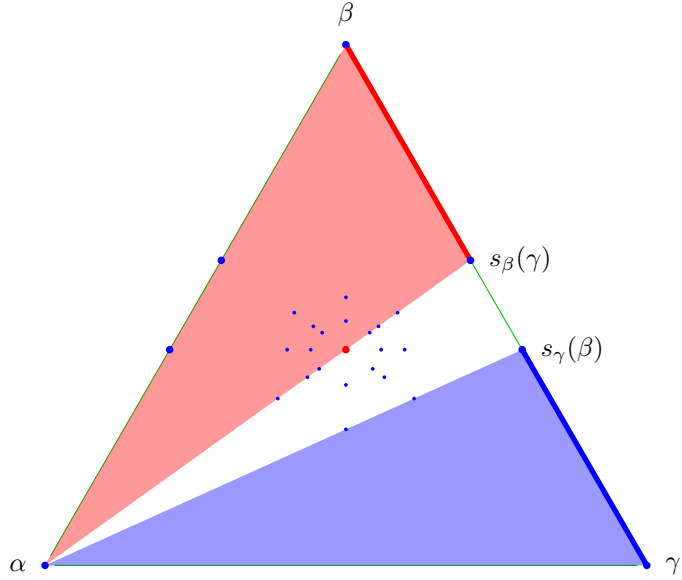
\begin{figure}[h!]
\begin{tikzpicture}
	[scale=2,
	 q/.style={red,line join=round,thick},
	 racine/.style={blue},
	 racinesimple/.style={black},
	 racinedih/.style={blue},
	 limit/.style={fill=red,draw=black,diamond},
	 limdir/.style={fill=orange,draw=black,diamond},
	 weight/.style={fill=green,draw=black,diamond},
	 sommet/.style={inner sep=2pt,circle,draw=black,fill=blue,thick,anchor=base},
	 rotate=0]

\def\grosseur{0.0125}
\def\grosseursimple{0.025}

\def\grosseurdih{0.0075}

\fill[red] (2.00000000000000,1.43487787042860) circle (\grosseursimple);

\node[label=left :{$\alpha$}] (a) at (0.000000000000000,0.000000000000000) {};
\node[label=right :{$\gamma$}] (b) at (4.00000000000000,0.000000000000000) {};
\node[label=above :{$\beta$}] (g) at (2.00000000000000,3.46410161513775) {};
\node[label=right :{${s_\beta(\gamma)}$}] at (2.82842712474619,2.02922374470915) {};
\node[label=right :{${s_\gamma(\beta)}$}] at (3.17157287525381,1.43487787042860) {};

\draw[green!75!black] (a) -- (b) -- (g) -- (a);

\draw[blue,line width = 2pt] (4,0) -- (3.17157287525381,1.43487787042860);
\draw[red,line width = 2pt] (2.00000000000000,3.46410161513775) -- (2.82842712474619,2.02922374470915);

\fill[racine] (0.000000000000000,0.000000000000000) circle (\grosseursimple);
\fill[racine] (4.00000000000000,0.000000000000000) circle (\grosseursimple);
\fill[racine] (2.00000000000000,3.46410161513775) circle (\grosseursimple);

\fill[red,fill opacity=0.4] (0,0) -- (2.82842712474619,2.02922374470915) -- (2.00000000000000,3.46410161513775);
\fill[blue,fill opacity=0.4] (0,0) -- (4,0) -- (3.17157287525381,1.43487787042860);

\fill[racine] (1.17157287525381,2.02922374470915) circle (\grosseursimple);
\fill[racine] (2.82842712474619,2.02922374470915) circle (\grosseursimple);
\fill[racine] (3.17157287525381,1.43487787042860) circle (\grosseursimple);
\fill[racine] (0.828427124746190,1.43487787042860) circle (\grosseursimple);

\fill[racine] (1.54691816067803,1.10981931806051) circle (\grosseur);
\fill[racine] (2.45308183932197,1.10981931806051) circle (\grosseur);
\fill[racine] (2.00000000000000,0.904836765142137) circle (\grosseur);

\fill[racine] (1.65685424949238,1.68106399229610) circle (\grosseur);
\fill[racine] (2.00000000000000,1.78303762284165) circle (\grosseur);
\fill[racine] (2.34314575050762,1.68106399229610) circle (\grosseur);

\fill[racine] (1.60947570824873,1.43487787042860) circle (\grosseur);
\fill[racine] (1.74452083820543,1.25158717262127) circle (\grosseur);
\fill[racine] (2.39052429175127,1.43487787042860) circle (\grosseur);
\fill[racine] (2.25547916179457,1.25158717262127) circle (\grosseur);

\fill[racine] (2.21638837510878,1.59012331585939) circle (\grosseur);
\fill[racine] (1.78361162489122,1.59012331585939) circle (\grosseur);
\fill[racine] (2.00000000000000,1.20047127350246) circle (\grosseur);

\fill[racine] (2.00000000000000,1.62529276184047) circle (\grosseur);
\fill[racine] (1.82210585078189,1.30724968143271) circle (\grosseur);
\fill[racine] (2.17789414921811,1.30724968143271) circle (\grosseur);

\fill[racine] (2.23431457505076,1.43487787042860) circle (\grosseur);
\fill[racine] (2.15801714711854,1.54824552420293) circle (\grosseur);
\fill[racine] (1.84198285288146,1.54824552420293) circle (\grosseur);
\fill[racine] (1.76568542494924,1.43487787042860) circle (\grosseur);

\end{tikzpicture}
\caption{The join of $s_\alpha$ and $s_\gamma s_\beta$ exists and is the element $g=s_\gamma (s_\alpha s_\beta)^2$ whose inversion set is constituted of the roots in the blue triangle that represents $\cone(\gamma, {s_\gamma(\beta)},\alpha)$. However the join of $s_\alpha$ and $s_\beta s_\gamma$ does not exist since the red dot is contained in the  red triangle that represents $\cone(\beta, {s_\beta(\gamma)},\alpha)$. }
\label{fig:AffineC}
\end{figure}
\end{ex}

The end of the proof of Theorem~\ref{thm:Main1} gives the following useful characterization of inversion sets, see Figure~\ref{Fig:CaracFinite} for an illustration.

\begin{cor}\label{cor:CaracFinite} Let $H$ be a hyperplane that does not intersect the imaginary convex body $\conv(E)$. Then then  the positive roots on the other side of $\conv(E)$ relatively to $H$ form an inversion set $N(w)$ for a $w\in W$. Moreover, any finite biclosed set is obtained in this way.
\end{cor}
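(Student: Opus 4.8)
The plan is to extract both assertions from the machinery already assembled for Theorem~\ref{thm:Main1}, working throughout in the affine chart $V_1$. For the first assertion: since $H\cap\conv(E)=\varnothing$ and $\conv(E)$ is convex, hence connected, $\conv(E)$ lies in one of the two open half-spaces bounded by $H$; write $\h H^{-}$ for that one and $\h H^{+}$ for the other, and set $A:=\{\beta\in\Phip\mid\h\beta\in\h H^{+}\}$, the positive roots lying strictly on the side of $H$ away from $\conv(E)$. I would first note that $A$ is biclosed, being the set of positive roots strictly on one side of the linear extension of $H$: this makes it separable, hence biclosed, by Lemma~\ref{lem:SepaConvClos}$(iii)$. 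Then I would show $A$ is finite: if it were not, then, as $\h\Phi\subseteq\conv(\h\Delta)$ is bounded, the infinite set $\h A$ would have an accumulation point $p$, which would be an accumulation point of $\h\Phi$, i.e. a limit root; but every accumulation point of $\h A\subseteq\h H^{+}$ lies in $\overline{\h H^{+}}=\h H\cup\h H^{+}$, which is disjoint from $E\subseteq\conv(E)\subseteq\h H^{-}$ --- a contradiction. A finite biclosed set is an inversion set by Proposition~\ref{prop:FiniteBiclos}, so $A=N(w)$ for some $w\in W$.

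For the ``moreover'' statement, given a finite biclosed set $A$, write $A=N(w)$ via Proposition~\ref{prop:biclosed} (the cases $W$ finite, where $E=\varnothing$, and $A=\varnothing$, where any hyperplane of $V_1$ missing the compact polytope $\conv(\h\Delta)\supseteq\h\Phi\cup\conv(E)$ works, are degenerate and I would dispatch them first). By Proposition~\ref{prop:FiniteBiclos}$(v)$ there is a hyperplane $H_0$ of $V_1$ with $\h A$ strictly on one open side $\h H_0^{+}$ and $\h{A^{c}}$ strictly on the other side $\h H_0^{-}$; since $\Phip\cap H_0^{+}=A$ is finite and $\h H_0^{+}$ is open, $\h H_0^{+}$ contains no limit root, whence $E\subseteq\overline{\h H_0^{-}}$ --- exactly the argument in the last paragraph of the proof of Theorem~\ref{thm:Main1}. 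This $H_0$ separates $A$ from $A^{c}$ but $\conv(E)$ might abut it, so I would perturb: set $\varepsilon:=\min_{\h\beta\in\h A}\operatorname{dist}(\h\beta,H_0)>0$ (positive since $\h A$ is finite and inside the open set $\h H_0^{+}$) and let $H$ be the translate of $H_0$ in $V_1$, parallel to itself, by $\varepsilon/2$ towards $A$. Comparing signed distances to $H_0$ and to $H$ shows that each point of $\h A$ stays strictly on the $A$-side of $H$, while each point of $\h{A^{c}}$ and each point of $E$ has signed distance at most $-\varepsilon/2<0$; thus $\conv(E)$, being convex, lies strictly on the $A^{c}$-side of $H$, so $H\cap\conv(E)=\varnothing$, and the positive roots strictly on the far side of $H$ from $\conv(E)$ are exactly those whose normalization lies in $\h H_0^{+}$, namely $A$.

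I do not anticipate a real obstacle: the finiteness step reuses the observation, central to the proof of Theorem~\ref{thm:Main1}, that accumulation points of $\h\Phi$ are limit roots, together with compactness of $\conv(\h\Delta)$; the only new point is the small translation in the converse direction, which is needed precisely because Proposition~\ref{prop:FiniteBiclos}$(v)$ furnishes a hyperplane separating $A$ from $A^{c}$ but not necessarily one avoiding $\conv(E)$. If one wishes to avoid the perturbation, one can instead take $H$ to be the trace on $V_1$ of $w(H_0^{\mathrm{lin}})$ with $H_0^{\mathrm{lin}}$ a linear hyperplane strictly separating $\cone(\Delta)\setminus\{0\}$ from $-\cone(\Delta)\setminus\{0\}$, and invoke the $W$-invariance of the imaginary cone $\cone(E)$ together with $\cone(E)\subseteq\cone(\Delta)$ to see that $\cone(E)=w(\cone(E))\subseteq w(\cone(\Delta))$ is automatically strictly on the $A^{c}$-side. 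The remaining care is purely in keeping track of open versus closed half-spaces and the degenerate cases.
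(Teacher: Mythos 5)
Your proof is correct and follows essentially the same route as the paper, which obtains this corollary by rereading the two halves of the proof of Theorem~\ref{thm:Main1}: the first assertion is exactly the argument in the direction $(iv)\Rightarrow(i)$ (the roots strictly beyond $H$ form a separable, hence biclosed, set by Lemma~\ref{lem:SepaConvClos}$(iii)$, and finiteness follows because an accumulation point of its normalization would be a limit root trapped in the closed half-space away from $\conv(E)$), while the ``moreover'' is the argument in the direction $(i)\Rightarrow(iv)$. The one place you go beyond what the paper writes is the perturbation step: the hyperplane $H_0$ furnished by Proposition~\ref{prop:FiniteBiclos}$(v)$ strictly separates $\h A$ from $\h{A^{c}}$, but the argument in the proof of Theorem~\ref{thm:Main1} only places $E$ (hence $\conv(E)$) in the \emph{closed} half-space $\h H_0\cup\h H_0^{-}$, so $\conv(E)$ could a priori touch $H_0$, whereas the corollary as stated demands a hyperplane disjoint from $\conv(E)$. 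Your translation by $\varepsilon/2$ closes this small gap cleanly; your alternative---taking $H=w(H_0)$ for $H_0$ a supporting hyperplane of $\cone(\Delta)$ at $0$ and invoking $W$-invariance of the imaginary cone together with $\cone(E)\subseteq\cone(\Delta)$---also works and is arguably closer to the geometry underlying Proposition~\ref{prop:FiniteBiclos}, though it leans on the tiling $\conv(E)=\overline{W\cdot K}$ that the paper only records in its final section. Either way, the argument is sound and the extra care is warranted rather than redundant.
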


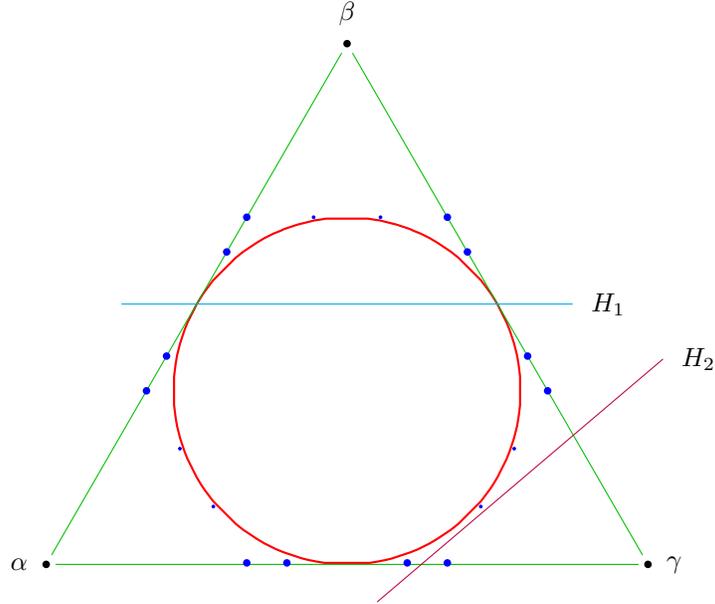
\begin{figure}[h!]
\begin{tikzpicture}
	[scale=2,
	 q/.style={red,line join=round,thick},
	 racine/.style={blue},
	 racinesimple/.style={black},
	 racinedih/.style={blue},
	 limit/.style={fill=red,draw=black,diamond},
	 limdir/.style={fill=orange,draw=black,diamond},
	 weight/.style={fill=green,draw=black,diamond},
	 sommet/.style={inner sep=2pt,circle,draw=black,fill=blue,thick,anchor=base},
	 rotate=0]

\def\grosseur{0.0125}
\def\grosseursimple{0.025}

\def\grosseurdih{0.0075}


\draw[q] (3.15,1.24) -- (3.15,1.25) -- (3.14,1.33) -- (3.13,1.39) -- (3.12,1.43) -- (3.11,1.47) -- (3.10,1.50) -- (3.09,1.53) -- (3.08,1.56) -- (3.06,1.61) -- (3.05,1.63) -- (3.01,1.71) -- (3.00,1.73) -- (2.97,1.78) -- (2.95,1.81) -- (2.92,1.85) -- (2.89,1.89) -- (2.88,1.90) -- (2.75,2.03) -- (2.74,2.04) -- (2.69,2.08) -- (2.66,2.10) -- (2.63,2.12) -- (2.60,2.14) -- (2.55,2.17) -- (2.53,2.18) -- (2.51,2.19) -- (2.49,2.20) -- (2.44,2.22) -- (2.42,2.23) -- (2.39,2.24) -- (2.36,2.25) -- (2.33,2.26) -- (2.25,2.28) -- (2.21,2.29) -- (2.20,2.29) -- (2.14,2.30) -- (2.13,2.30) -- (1.87,2.30) -- (1.86,2.30) -- (1.80,2.29) -- (1.79,2.29) -- (1.75,2.28) -- (1.67,2.26) -- (1.64,2.25) -- (1.61,2.24) -- (1.58,2.23) -- (1.56,2.22) -- (1.51,2.20) -- (1.49,2.19) -- (1.47,2.18) -- (1.45,2.17) -- (1.40,2.14) -- (1.37,2.12) -- (1.34,2.10) -- (1.31,2.08) -- (1.26,2.04) -- (1.25,2.03) -- (1.12,1.90) -- (1.11,1.89) -- (1.08,1.85) -- (1.05,1.81) -- (1.03,1.78) -- (1.00,1.73) -- (0.990,1.71) -- (0.950,1.63) -- (0.940,1.61) -- (0.920,1.56) -- (0.910,1.53) -- (0.900,1.50) -- (0.890,1.47) -- (0.880,1.43) -- (0.870,1.39) -- (0.860,1.33) -- (0.850,1.25) -- (0.850,1.24) -- (0.850,1.07) -- (0.850,1.06) -- (0.860,0.980) -- (0.870,0.920) -- (0.880,0.880) -- (0.890,0.840) -- (0.900,0.810) -- (0.910,0.780) -- (0.920,0.750) -- (0.940,0.700) -- (1.00,0.580) -- (1.03,0.530) -- (1.05,0.500) -- (1.08,0.460) -- (1.11,0.420) -- (1.12,0.410) -- (1.26,0.270) -- (1.31,0.230) -- (1.40,0.170) -- (1.45,0.140) -- (1.47,0.130) -- (1.49,0.120) -- (1.51,0.110) -- (1.53,0.100) -- (1.58,0.0800) -- (1.61,0.0700) -- (1.67,0.0500) -- (1.70,0.0400) -- (1.74,0.0300) -- (1.79,0.0200) -- (1.85,0.0100) -- (1.86,0.0100) -- (2.14,0.0100) -- (2.15,0.0100) -- (2.21,0.0200) -- (2.26,0.0300) -- (2.30,0.0400) -- (2.33,0.0500) -- (2.39,0.0700) -- (2.42,0.0800) -- (2.47,0.100) -- (2.49,0.110) -- (2.51,0.120) -- (2.53,0.130) -- (2.55,0.140) -- (2.60,0.170) -- (2.69,0.230) -- (2.74,0.270) -- (2.88,0.410) -- (2.89,0.420) -- (2.92,0.460) -- (2.95,0.500) -- (2.97,0.530) -- (3.00,0.580) -- (3.06,0.700) -- (3.08,0.750) -- (3.09,0.780) -- (3.10,0.810) -- (3.11,0.840) -- (3.12,0.880) -- (3.13,0.920) -- (3.14,0.980) -- (3.15,1.06) -- (3.15,1.07) -- cycle;

\node[label=left :{$\alpha$}] (a) at (0.000000000000000,0.000000000000000) {};
\fill[racinesimple] (0.000000000000000,0.000000000000000) circle (\grosseursimple);
\node[label=right :{$\gamma$}] (b) at (4.00000000000000,0.000000000000000) {};
\fill[racinesimple] (4.00000000000000,0.000000000000000) circle (\grosseursimple);
\node[label=above :{$\beta$}] (g) at (2.00000000000000,3.46410161513775) {};
\fill[racinesimple] (2.00000000000000,3.46410161513775) circle (\grosseursimple);

\draw[green!75!black] (a) -- (b) -- (g) -- (a);

\fill[racine] (1.33333333333333,2.30940107675850) circle (\grosseursimple);
\fill[racine] (1.33333333333333,0.01) circle (\grosseursimple);
\fill[racine] (0.666666666666667,1.15470053837925) circle (\grosseursimple);
\fill[racine] (2.66666666666667,2.30940107675850) circle (\grosseursimple);
\fill[racine] (3.33333333333333,1.15470053837925) circle (\grosseursimple);
\fill[racine] (2.66666666666667,0.01) circle (\grosseursimple);

\fill[racine] (0.800000000000000,1.38564064605510) circle (\grosseursimple);
\fill[racine] (1.60000000000000,0.01) circle (\grosseursimple);
\fill[racine] (1.20000000000000,2.07846096908265) circle (\grosseursimple);
\fill[racine] (2.80000000000000,2.07846096908265) circle (\grosseursimple);
\fill[racine] (0.888888888888889,0.769800358919501) circle (\grosseur);
\fill[racine] (2.88888888888889,0.384900179459750) circle (\grosseur);
\fill[racine] (2.22222222222222,2.30940107675850) circle (\grosseur);
\fill[racine] (2.40000000000000,0.01) circle (\grosseursimple);
\fill[racine] (3.11111111111111,0.769800358919501) circle (\grosseur);
\fill[racine] (1.77777777777778,2.30940107675850) circle (\grosseur);
\fill[racine] (1.11111111111111,0.384900179459750) circle (\grosseur);
\fill[racine] (3.20000000000000,1.38564064605510) circle (\grosseursimple);

\draw[cyan] ($(a)!0.5!(g)-(0.5,0)$) -- ($(b)!0.5!(g)+(0.5,0)$);

\node[label=right:{$H_1$}] at ($(b)!0.5!(g)+(0.5,0)$) {};

\draw[purple] ($(a)!0.5!(b)+(0.2,-0.25)$) -- ($(b)!0.25!(g)+(0.6,0.5)$);

\node[label=right:{$H_2$}] at ($(b)!0.25!(g)+(0.6,0.5)$) {};

\end{tikzpicture}
\caption{Illustration of Corollary~\ref{cor:CaracFinite}:  the set of roots on the side of $H_2$ not containing $\conv(E)$ forms a finite biclosed set, whereas neither side of $H_1$ provides a finite biclosed set.}
\label{Fig:CaracFinite}
\end{figure}
%

\subsection{Toward an even simpler criterion for the existence of a join}  As we stated after Theorem~\ref{thm:Dyer}, Dyer asks the question if we could replace the bounded hypothesis in Theorem~\ref{thm:Dyer} by ``$\overline{N(X)}$ is finite". We believe it to be true and state it as a conjecture:
\begin{qu}[{Dyer~\cite[Remarks 1.5]{dyer_weak_2011}}]\label{conj:2}
    Does $\bigvee X$ exists if $\overline{N(X)}$ is finite?
\end{qu}

In regard of Theorem~\ref{thm:Main1}, a strategy for answering Question~\ref{conj:2} would be to study  $\conv(\h N(X))\cap\conv(E)$ whenever $\overline{N(X)}$ is finite. Indeed, since  conical  closure implies $2$-closure, we have $\overline{N(X)}\subseteq \cone_\Phi(N(X))$; therefore $\cone(\overline{N(X)})\subseteq \cone(N(X))$ and by minimality of the conic closure we have 
$
\cone(\overline{N(X)})= \cone(N(X)).
$ 
 So if $\conv(\h N(X))\cap\conv(E)=\varnothing$, this would imply that $N(X)$ is bounded and so the join would exist. This discussion leads us naturally to state the following question.

\begin{qu}\label{conj:3}
    Does the join $\bigvee X$ exists if $\cone_\Phi(N(X))$ is finite? 
\end{qu}

Note that if $\bigvee X$ exists then $\cone_\Phi(N(X))$ is finite by Proposition~\ref{prop:JoinFinite}. Note also that if $\cone_\Phi(N(X))$ is finite, so is $N(X)$. Therefore $P=\conv(\h N(X))$ is a polytope (and so is compact).  One strategy to answer by the positive Question~\ref{conj:3} would be to prove that $P$ and $\conv(E)$ are strictly separated. The difficulty to answer Question~\ref{conj:3} lies then in the fact that the set of the limit root does not restrict well to root subsystems, see~\cite{hohlweg_asymptotical_2014,dyer_imaginary2_2013} for a discussion.  More precisely, if $A\subseteq \Phip$ is finite and if $\conv(E)$ intersects a proper face $F$ of the polytope $\conv(\h A)$, we would need to show that this face contains a limit root $x\in E(\Phi')$ where $\Phi'=\Phi\cap \textrm{aff}(F)$ (here $\textrm{aff}(F)$ is the affine subspace generated by $F$), which would imply that $\cone_\Phi(A)$ is infinite. This would require a continuation of the study of the faces of the imaginary convex body $\conv(E)$ initiated in~\cite{dyer_imaginary_2013} and \cite[\S3]{dyer_imaginary2_2013}. However, it is not too difficult to show that if $W$ is of rank $\leq 3$, then Question~\ref{conj:3} has a positive answer by following the above outlined strategy.

\section{Biclosed sets corresponding to inversion sets of infinite words}\label{se:Inf}

The aim of this section is to discuss a generalization of Corollary~\ref{cor:CaracFinite} to  inversion sets of infinite reduced  words on $S$. 

\subsection{Infinite reduced words and limit roots}\label{see:Inf} Let $\omega=s_1s_2s_3 \dots\in S^*$ be an infinite word on $S$. We say that $\omega$ is {\em an infinite reduced word} if any prefix $w_i=s_1\dots s_i$, $i\geq 1$ is reduced in $W$. We denote by $\mathcal W$ the set of infinite reduced words on $S$. Two infinite reduced words $w'$ and $w$ are equivalent, denoted by $\omega\sim \omega'$, if $\omega'$ can be obtained  from $\omega$ by a possibly infinite number of braid moves. More precisely,  define a preorder $\prec$ as follows: $\omega\prec \omega'$ if for any prefix $w_i$ of $\omega$, there is a prefix $w'_j$ of $\omega'$ such that $w_i\leq w'_j$. Then we write $\omega\sim \omega'$ if $\omega\prec \omega'$  and $\omega'\prec \omega$. 

Let $\mathcal W^\infty$ be the quotient set of $\mathcal W$ by this equivalence relation. Therefore, the partial order $\prec$ on $\mathcal W$ induces a partial order $\leq$ on $\mathcal W^\infty$ called the {\em limit weak order}; see~\cite[\S4.6]{lam_total_2013}. We say that {\em $w\in W$ is a prefix of $\omega\in \mathcal W^\infty$} if a reduced expression of $w$ is a prefix of some infinite reduced word $\omega'$ in the equivalence class of $\omega$, i.e., there is a prefix of $w'_j$ of $\omega'$ such that $w\leq w'_j$.

 For $\omega=s_1s_2\dots\in \mathcal W$, where $w_i=s_1\dots s_i$, we consider the following sequence of  roots:

\begin{equation}\label{eq:InfInv}
\beta_1:=\alpha_{s_1}\in\Delta\textrm{ and } \beta_i:=w_{i-1}(\alpha_{s_i})\in\Phip, \textrm{ for }i\geq 2.
\end{equation}
The {\em inversion set of $\omega$} is then
$$
N(\omega):=\bigcup_{i\in\mathbb N} N(w_i)=\{\beta_i\,|\, i\in \mathbb N^*\}\subseteq \Phip.
$$
We illustrate this notion in Figure~\ref{fig:InfiniteDi}, in~Example~\ref{ex:A2tilde} and~Figure~\ref{fig:A2tilde}.

\begin{ex}\label{ex:A2tilde} Consider the affine Coxeter group of type $\tilde A_2$ given by the following Coxeter graph:
\begin{center}
\begin{tikzpicture}[sommet/.style={inner sep=2pt,circle,draw=blue!75!black,fill=blue!40,thick},]
\coordinate (ancre) at (0,0);
\node[sommet,label=left:$s_\alpha$] (alpha) at (ancre) {};
\node[sommet,label=right :$s_\beta$] (beta) at ($(ancre)+(1,0)$) {} edge[thick] (alpha);
\node[sommet,label=above:$s_\gamma$] (gamma) at ($(ancre)+(0.5,0.86)$) {} edge[thick] (alpha) edge[thick] (beta);
\end{tikzpicture}
\end{center}
Consider $\omega=(s_\alpha s_\beta s_\gamma)^\infty$ and $\omega'=s_\beta(s_\alpha s_\beta s_\gamma)^\infty=s_\beta\omega$: these two infinite reduced words are distinct in $\mathcal W^\infty$ since $s_\beta\prec \omega'$ and there is no prefix $w_i$ of $\omega$ such that $s_\beta\leq w_i$, but $\omega\prec\omega'$ since: 
$s_\alpha\leq s_\beta s_\alpha s_\beta$; $ s_\alpha s_\beta\leq s_\beta s_\alpha s_\beta$; $ s_\alpha s_\beta s_\gamma\leq s_\beta s_\alpha s_\beta s_\gamma s_\alpha=s_\alpha s_\beta s_\gamma s_\alpha s_\gamma$;  $s_\alpha s_\beta s_\gamma s_\alpha \leq s_\beta s_\alpha s_\beta s_\gamma s_\alpha=s_\alpha s_\beta s_\gamma s_\alpha s_\gamma;$ etc.; see Figure~\ref{fig:A2tilde}.

\begin{figure}[h!]
\begin{tikzpicture}
	[scale=2,
	 q/.style={red,line join=round,thick},
	 racine/.style={blue},
	 racinesimple/.style={inner sep=2pt,circle,draw=blue!75!black,fill=blue!40,thick},
	 racinedih/.style={blue},
	 rotate=0]

\def\grosseur{0.025}
\def\grosseursimple{0.05}

\def\grosseurdih{0.0075}

\filldraw[thick, red, fill opacity=0.5] (0.000000000000000,0.000000000000000) -- (2,0) -- (2.00000000000000,1.15470053837925) -- cycle;

\fill[red] (2.00000000000000,1.15470053837925) circle (\grosseursimple);

\node[racinesimple, label=left :{$\alpha$}] (a) at (0,0) {};
\node[racinesimple, label=right :{$\beta$}] (b) at (4,0) {};
\node[racinesimple, label=above:{$\gamma$}] (g) at (2.00000000000000,3.46410161513775) {};
\draw[green!75!black] (a) -- (b) -- (g) -- (a);
\fill[racine] (3.00000000000000,1.73205080756888) circle (\grosseur);
\fill[racine] (2.00000000000000,0.01) circle (\grosseur);
\fill[racine] (1.00000000000000,1.73205080756888) circle (\grosseur);

\fill[racine] (2.50000000000000,0.866025403784439) circle (\grosseur);
\fill[racine] (1.50000000000000,0.866025403784439) circle (\grosseur);
\fill[racine] (2.00000000000000,1.73205080756888) circle (\grosseur);

\fill[racine] (2.00000000000000,0.692820323027551) circle (\grosseur);
\fill[racine] (2.40000000000000,1.38564064605510) circle (\grosseur);
\fill[racine] (1.60000000000000,1.38564064605510) circle (\grosseur);

\fill[racine] (1.71428571428571,0.989743318610787) circle (\grosseur);
\fill[racine] (2.28571428571429,0.989743318610787) circle (\grosseur);
\fill[racine] (2.00000000000000,1.48461497791618) circle (\grosseur);

\fill[racine] (1.75000000000000,1.29903810567666) circle (\grosseur);
\fill[racine] (2.25000000000000,1.29903810567666) circle (\grosseur);
\fill[racine] (2.00000000000000,0.866025403784439) circle (\grosseur);

\fill[racine] (2.20000000000000,1.03923048454133) circle (\grosseur);
\fill[racine] (1.80000000000000,1.03923048454133) circle (\grosseur);
\fill[racine] (2.00000000000000,1.38564064605510) circle (\grosseur);

\fill[racine] (1.81818181818182,1.25967331459555) circle (\grosseur);
\fill[racine] (2.18181818181818,1.25967331459555) circle (\grosseur);
\fill[racine] (2.00000000000000,0.944754985946660) circle (\grosseur);

\fill[racine] (2.00000000000000,1.33234677505298) circle (\grosseur);
\fill[racine] (1.84615384615385,1.06587742004239) circle (\grosseur);
\fill[racine] (2.15384615384615,1.06587742004239) circle (\grosseur);

\fill[racine] (1.85714285714286,1.23717914826348) circle (\grosseur);
\fill[racine] (2.00000000000000,0.989743318610787) circle (\grosseur);
\fill[racine] (2.14285714285714,1.23717914826348) circle (\grosseur);


\node at ($(0,0)!0.5!(2,0)!0.33!(2,1.15470053837925)$) {A};

\node at (-0.5,1.5) {$\h N(\omega)=A\cap\h\Phi$};
\node at (4.5,1.5) {$\begin{array}{rcl}\h N(\omega')&=&\{\h\beta\}\cup \h{s_\beta( N(\omega))}\\&=& \{\h\beta\}\cup (A\cap\h\Phi)\end{array}$};

\end{tikzpicture}

\caption{The inversion sets of $\omega=(s_\alpha s_\beta s_\gamma)^\infty$ and $\omega'=s_\beta(s_\alpha s_\beta s_\gamma)^\infty$.}
\label{fig:A2tilde}
\end{figure}
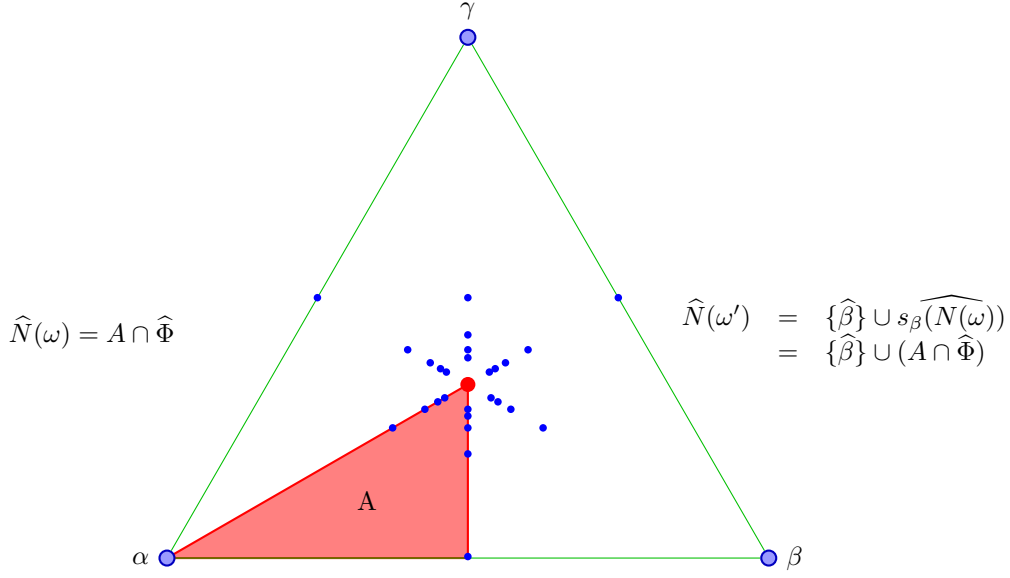
\end{ex}

\begin{prop}\label{prop:Infinite} Let $\omega,\omega'\in\mathcal W$.
\begin{enumerate}[(i)]
\item $N(\omega)$ is separable (hence biconvex and biclosed).
\item $w\in W$ is a prefix of $\omega$ if and only $N(w)\subseteq N(\omega)$.
\item If $\omega \prec \omega'$ if and only if $N(\omega)\subseteq N(\omega')$. 
\item $N(\omega)=N(\omega')$ if and and only if $\omega\sim \omega'$. 
\item $N:W\sqcup \mathcal W^\infty\rightarrow\mathcal B(\Phip)$ is a monomorphism.
\end{enumerate}
\end{prop}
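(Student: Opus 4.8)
The plan is to establish the five items essentially in order, since later items build on earlier ones. For item $(i)$, I would fix $\omega = s_1s_2\dots$ with prefixes $w_i$ and the roots $\beta_i$ of \eqref{eq:InfInv}. Each $N(w_i)$ is finite biclosed, hence separable by Proposition~\ref{prop:FiniteBiclos}, so there is a linear hyperplane $H_i$ strictly separating $N(w_i)$ from its complement. The difficulty here (and the one place a genuine limiting argument is needed) is to produce a \emph{single} hyperplane that works for the infinite set $N(\omega)=\bigcup_i N(w_i)$. The natural approach is to use the projective picture: the normalized roots $\h\beta_i$ accumulate only on $E$, and one shows that the infinite reduced word $\omega$ forces the accumulation points of $\{\h\beta_i\}$ to lie in a single ``visible'' face, or more directly, that $\cone(N(\omega))$ is pointed and its closure misses the antipodal cone; then a supporting hyperplane of the pointed closed cone $\overline{\cone(N(\omega))}$ separates $N(\omega)$ from $N(\omega)^c$. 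Alternatively, and perhaps more cleanly, one can cite the structure of $N(\omega)$ as an increasing union of convex sets and check directly that $\cone(N(\omega))\cap\cone(N(\omega)^c)=\{0\}$: if $\gamma\in N(\omega)^c$ were a nonnegative combination of elements of $N(\omega)$, finitely many of them, those lie in some $N(w_i)$, contradicting separability of $N(w_i)$ once one knows $\gamma\notin N(w_i)$ — and $\gamma\notin N(\omega)\supseteq N(w_i)$. This gives separability directly, and then biconvex and biclosed follow from Lemma~\ref{lem:SepaConvClos}$(ii)$.

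For item $(ii)$, the forward direction is immediate from the definition of prefix together with Proposition~\ref{prop:weakRoot}$(iii)$: if $w\leq w'_j$ for a prefix $w'_j$ of some $\omega'\sim\omega$, then $N(w)\subseteq N(w'_j)\subseteq N(\omega')$, and I will have shown (anticipating item $(iv)$, or proving the easy inclusion $N(\omega')\subseteq N(\omega)$ when $\omega\sim\omega'$) that this equals $N(\omega)$. Actually it is cleaner to prove the equivalence-independence of $N$ first: if $\omega\prec\omega'$ then every prefix $w_i$ of $\omega$ satisfies $w_i\leq w'_j$ for some prefix of $\omega'$, so $N(w_i)\subseteq N(w'_j)\subseteq N(\omega')$, whence $N(\omega)\subseteq N(\omega')$; this already proves one implication of item $(iii)$. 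For the converse in $(ii)$, suppose $N(w)\subseteq N(\omega)$; since $N(w)$ is finite, $N(w)\subseteq N(w_i)$ for $i$ large enough (as $N(\omega)$ is the increasing union of the $N(w_i)$), and then $w\leq w_i$ by Proposition~\ref{prop:weakRoot}$(iii)$ / Proposition~\ref{prop:biclosed}, so $w$ is a prefix of $\omega$.

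For item $(iii)$, the implication $\omega\prec\omega'\Rightarrow N(\omega)\subseteq N(\omega')$ is the computation above. Conversely, if $N(\omega)\subseteq N(\omega')$, then for each prefix $w_i$ of $\omega$ we have $N(w_i)\subseteq N(\omega')$, so by item $(ii)$ applied to $\omega'$, $w_i$ is a prefix of $\omega'$, i.e.\ $w_i\leq w'_j$ for some prefix $w'_j$ of $\omega'$; this is exactly $\omega\prec\omega'$. Item $(iv)$ is then formal: $N(\omega)=N(\omega')$ iff $N(\omega)\subseteq N(\omega')$ and $N(\omega')\subseteq N(\omega)$ iff $\omega\prec\omega'$ and $\omega'\prec\omega$ iff $\omega\sim\omega'$, so $N$ is well-defined and injective on $\mathcal W^\infty$. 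Finally, item $(v)$: $N$ restricted to $W$ is a monomorphism by Proposition~\ref{prop:biclosed}, $N$ restricted to $\mathcal W^\infty$ is an order-embedding by items $(iii)$ and $(iv)$, and the two images are disjoint because elements of $W$ have finite inversion sets while $N(\omega)$ is infinite (the $\beta_i$ are pairwise distinct since $\ell(w_i)=i$ and $N(w_i)=\{\beta_1,\dots,\beta_i\}$). Cross-comparisons are handled by item $(ii)$: $w\in W$ is below $\omega\in\mathcal W^\infty$ in the combined order iff $w$ is a prefix of $\omega$ iff $N(w)\subseteq N(\omega)$, and no $\omega$ is below any $w\in W$ since infinite sets are not contained in finite ones. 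The main obstacle, as noted, is the separability in item $(i)$; everything else is bookkeeping with the monotone-union structure of $N(\omega)$ and Proposition~\ref{prop:biclosed}.
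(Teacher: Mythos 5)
Your treatment of items (ii)--(v) matches the paper's proof: one first shows $\omega\prec\omega'\Rightarrow N(\omega)\subseteq N(\omega')$ directly from the definition of $\prec$ and the monotonicity of $N$ on $W$, then uses the monotone-union structure $N(\omega)=\bigcup_i N(w_i)$ together with Proposition~\ref{prop:biclosed} for the converses. Your reordering so that the containment $N(\omega)=N(\omega')$ for $\omega\sim\omega'$ is available before the forward direction of (ii) is a sensible clarification of the paper's terser ``follows from definition'', and your handling of (iv), (v) and the disjointness of the finite and infinite inversion sets is correct.

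For item (i), two remarks. First, your ``natural approach'' of producing a single hyperplane strictly separating $N(\omega)$ from its complement cannot work in general: the paper itself exhibits, in the remark following Corollary~\ref{prop:ImN}, the set $N(s(tr)^\infty)$ in the universal rank-$3$ group as a separable set that is \emph{not} strictly separated from its complement by any hyperplane; no limiting argument is needed here at all. Second, your preferred ``direct check'', as written, proves only that $N(\omega)$ is convex, not that it is separable: you argue that no \emph{root} $\gamma\in N(\omega)^c$ lies in $\cone(N(\omega))$, whereas separability requires $\cone(N(\omega))\cap\cone(N(\omega)^c)=\{0\}$, and a general element of $\cone(N(\omega)^c)$ is a nonnegative combination of roots of $N(\omega)^c$, not itself a root. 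Since \S\ref{se:OnDef} of the paper is devoted to showing that biconvex does not imply separable for infinite sets, this distinction is not cosmetic. The repair is short and is exactly the paper's argument: take any $\beta\in\cone(N(\omega))\cap\cone(N(\omega)^c)$; the finitely many roots of $N(\omega)$ expressing $\beta$ all lie in some $N(w_i)$, so $\beta\in\cone(N(w_i))$; moreover $N(w_i)\subseteq N(\omega)$ gives $N(\omega)^c\subseteq N(w_i)^c$, hence $\beta\in\cone(N(\omega)^c)\subseteq\cone(N(w_i)^c)$ as well, and separability of the finite biclosed set $N(w_i)$ (Proposition~\ref{prop:FiniteBiclos}) forces $\beta=0$.
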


\begin{remark}\begin{enumerate}[(a)]
\item The notion of inversion set of infinite words appear first, as far as we know,  in the work of Cellini and Papi~\cite{cellini_structure_1998} in the context of affine Weyl groups. The equivalence class, as well as the results stated in Proposition~\ref{prop:Infinite}, appears in the works of Ito~\cite{ito_classification_2001,ito_parameterizations_2005}. It was later used   in the work of Lam and Pylyavskyy~\cite{lam_total_2013}, and Lam and Thomas~\cite{lam_infinite_2013}. The proofs seem to have all been written within the framework of affine Coxeter systems and their associated crystallographic root systems; they generalize easily and, for convenience, we present them here in the context of an arbitrary Coxeter group. 
\item Lam and Pylyavskyy define in~\cite[\S4.6]{lam_total_2013} the {\em limit weak order in $\mathcal W^\infty$}: $\omega \leq \omega'$ in $\mathcal W^\infty$ if $N(\omega)\subseteq N(\omega')$. Thanks to Proposition~\ref{prop:Infinite} above, it is clear that the limit weak order is a subposet of the poset $(\mathcal B,\subseteq)$; recall that the poset $(\mathcal B,\subseteq)$ is conjectured to be a lattice (see Conjecture~\ref{conj:Dyer}). The limit weak order is studied further in Lam and Thomas~\cite{lam_infinite_2013}
\end{enumerate}
\end{remark}

\begin{proof} $(i)$ Assume that there is $\beta\in\cone(N(\omega))\cap\cone(N(\omega)^c)$. So there is a reduced prefix $w_i$ of $\omega$ such that $\beta \in \cone(N(w_i))$. Since $N(w_i) \subseteq N(\omega)$, we have $\beta \in \cone(N(\omega)^c)\subseteq \cone(N(w_i)^c)$. So $\beta\in\cone(N(w_i))\cap \cone(N(w_i)^c)$, implying $\beta=0$ since $N(w_i)$ is separable by Proposition~\ref{prop:FiniteBiclos}, a contradiction. 
	
\noindent $(ii)$ The direct implication follows from definition. Now, let $w\in W$ such that $N(w)\subseteq N(\omega)$. Since $N(w)$ is finite and $N(\omega)$ is the union of increasing finite subsets of $N(\omega)$, there is a prefix $w_i$ of $\omega$ such that $N(w)\subseteq N(w_i)$. Hence $w$ is a prefix of $w_i$ by Proposition~\ref{prop:biclosed}. Therefore $w$ is a prefix of $\omega$. 

\noindent $(iii)$ If $\omega\leq \omega'$, then any prefix $w\in W$ of $\omega$ is a prefix of $\omega'$, and therefore $N(w)\subseteq N(\omega)$ by $(ii)$. Hence $N(\omega)\subseteq N(\omega')$. Now, assume  that $N(\omega)=N(\omega')$. Let $w_i$ a finite reduced prefix of $\omega$, so $N(w_i)\subseteq N(\omega)=N(\omega')$. Therefore $w_i$ is a prefix of $\omega'$ by $(ii)$. Therefore $\omega\prec \omega'$. We show similarly that $\omega'\prec\omega$, hence that $\omega\sim \omega'$. Finally $(iv)$ follows from $(iii)$, and $(v)$ follows from $(iv)$.
\end{proof}

We state now a corollary of Theorem~\ref{thm:Main1}, extending partially Corollary~\ref{cor:CaracFinite} to infinite reduced words.

\begin{cor}\label{prop:ImN} Let $\omega\in \mathcal W^\infty$ then $\conv(\h N(\omega))\cap \conv(E)=\varnothing$. 
\end{cor}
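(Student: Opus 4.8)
The plan is to reduce the infinite statement to the finite criterion already proved in Theorem~\ref{thm:Main1}, using the fact (Proposition~\ref{prop:Infinite}(i)) that $N(\omega)$ is separable, hence biclosed. The key point is that $\h N(\omega)$ is an increasing union of finite biclosed sets $\h N(w_i)$, where $w_i=s_1\cdots s_i$ runs over the finite prefixes of a representative of $\omega$, and $\conv(E)$ is compact (by~\cite[Theorem~2.2]{dyer_imaginary2_2013}).

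First I would argue by contradiction: suppose $x\in \conv(\h N(\omega))\cap\conv(E)$. Since $\conv(\h N(\omega))$ is the convex hull of the (typically infinite) set $\h N(\omega)=\{\h\beta_i\mid i\in\mathbb N^*\}$, Carathéodory's theorem in $V_1$ gives that $x$ is a convex combination of at most $\dim V_1+1$ of the points $\h\beta_i$; hence there is a finite prefix $w_i$ with $x\in \conv(\h N(w_i))$. But then $x\in \conv(\h N(w_i))\cap\conv(E)$, and since $N(w_i)$ is a finite inversion set, this contradicts the implication $(i)\Rightarrow(iii)$ of Theorem~\ref{thm:Main1} (applied to $X=\{w_i\}$, whose join trivially exists). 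Therefore no such $x$ exists and $\conv(\h N(\omega))\cap\conv(E)=\varnothing$.

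An alternative, perhaps cleaner, route avoids Carathéodory: by Proposition~\ref{prop:Infinite}(i), $N(\omega)$ is separable, so $\cone(N(\omega))\cap\cone(N(\omega)^c)=\{0\}$, and one can try to separate these two cones by a hyperplane $H$; then $H$ witnesses that $E$ (the accumulation set of $\h\Phi$) lies on the closed side containing $\h N(\omega)^c$, because otherwise a limit root in the open side would force infinitely many roots there, contradicting that this side is disjoint from $N(\omega)$ — wait, this needs care, since $N(\omega)$ itself is infinite. The subtlety is exactly that $N(\omega)$ is infinite, so one cannot directly copy the finiteness argument from the proof of Theorem~\ref{thm:Main1}; the Carathéodory reduction to finite prefixes is what circumvents this, which is why I would present that argument as the main line.

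The step I expect to be the genuine obstacle is making sure the reduction to a finite prefix is legitimate: one must check that $\conv(\h N(\omega))$ really is the union $\bigcup_i \conv(\h N(w_i))$ of the prefix hulls (which follows since the $N(w_i)$ are nested with union $N(\omega)$, so every finite subset of $\h N(\omega)$ lies in some $\h N(w_i)$, and a convex combination only involves finitely many points). Once that is in place, compactness of $\conv(E)$ is not even strictly needed for this direction — only the finite case of Theorem~\ref{thm:Main1}. So the proof is essentially: invoke Carathéodory, land in a finite prefix, and quote Theorem~\ref{thm:Main1}$(i)\Rightarrow(iii)$.
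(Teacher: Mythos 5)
Your main argument is exactly the paper's proof: take a point in the intersection, observe that it is a convex combination of finitely many roots of $N(\omega)$, all of which lie in some finite prefix inversion set $N(w_i)$, and then contradict Theorem~\ref{thm:Main1} applied to $X=\{w_i\}$. (The appeal to Carath\'eodory is harmless but unnecessary --- any point of a convex hull is already a finite convex combination by definition --- and the paper uses just that.)
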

\begin{proof} If there is $x \in \conv(\h N(\omega))\cap \conv(E)$, then there are roots~${{\beta_1,\dots,\beta_k\in N(\omega)}}$ such that 
$
x=\sum_{i=1}^k a_i\beta_i,\ a_i\geq 0.
$
By definition of $N(\omega)$, there is a prefix~${{w_i\in W}}$ of $\omega$ such that $\beta_1,\dots,\beta_k\in N(w_i)$. Hence $x\in\cone(N(w_i))$. Therefore $x\in \conv(\h N(w_i))\cap \conv(E)$ contradicting  Theorem~\ref{thm:Main1}.
\end{proof}

\begin{remark}[Separable does not mean separable by an hyperplane] It is not true in general that an infinite subset of $\Phip$ is separable if and only if there is an hyperplane that separates it from its complement. Take the universal Coxeter group of rank $3$ generated by $s,t,r$ as in Figure~\ref{fig:Univ}. Then $A=N(s(t r)^\infty)$ is an infinite separable set by Proprosition~\ref{prop:Infinite} and $x=\h{s(\alpha_r+\alpha_t)}\in E$ is the accumulation point of  $A$. But $A$ is not strictly separated by an hyperplane from its complement. Indeed, assume by contradiction that there is an hyperplane~$H$ separating $\h A$ and $\h A^c$. Since in this case $E=\h Q$ (see \cite[Theorem 4.4]{dyer_imaginary2_2013}), the only point of $\h Q$ contained in the hyperplane $H$ separating $\h A$ and $\h A^c$ has to be $\h x$. Therefore,  $H$ is tangent to the circle and contains   $N(s(rt)^\infty)\subseteq A^c$, a contradiction. Such a counterexample that lies in the affine Coxeter group of type $\tilde A_2$ can be found in the second author's thesis~\cite[Figure~2.11]{labbe_polyhedral_2013}.
\end{remark}

\subsection{Toward a geometric characterization of the inversion sets of infinite reduced words}

Corollary~\ref{cor:CaracFinite} states that a biclosed set $A$ is an inversion set for some  $w\in W$ if and only if there is a hyperplane $H$ not intersecting $\conv(E)$ such that $A$ lies on the side of $H$ and its complement lies with $\conv(E)$ on the other side. By Proposition~\ref{prop:FiniteBiclos}, this  is equivalent to state that $\acc(\h A)$ is a empty (i.e. $A$ finite),  $A$ is biconvex   and $\conv(\h A)\cap \conv(E)=\varnothing$. 

We discuss now a conjectural generalization of this characterization for the case of inversion sets of infinite words. First, say that an infinite reduced word $\omega$ is {\em connected} if the induced subgraph of the Coxeter graph corresponding to the letters that appear an infinite number of times in $\omega$ is connected. 

\begin{conject}\label{conj:CharInf}  Let $A\in\mathcal B$. Then $A=N(\omega)$ for some connected  $\omega\in \mathcal W^\infty$ if and only if $A$ is biconvex,  $\acc(\h A)$ is a singleton and $\conv(\h A)\cap \conv(E)=\varnothing$.
\end{conject}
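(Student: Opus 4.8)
The plan is to prove the equivalence in three stages: the easy implication, a chain construction for the converse, and one hard geometric lemma on the accumulation of normalized roots, which I expect to be the real obstacle. Throughout I use that biconvex $\Rightarrow$ biclosed (Lemma~\ref{lem:SepaConvClos}) and that $\acc(\h A)$ is always contained in $E$ since it consists of accumulation points of normalized roots.

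\emph{The implication $\Rightarrow$.} If $A=N(\omega)$ with $\omega\in\mathcal W^\infty$, then $A$ is biconvex by Proposition~\ref{prop:Infinite}$(i)$ and $\conv(\h A)\cap\conv(E)=\varnothing$ by Corollary~\ref{prop:ImN}, so only the assertion that $\acc(\h A)$ is a singleton for connected $\omega$ remains. I would first reduce to the irreducible case: letting $T$ be the set of letters occurring infinitely often in $\omega$, some finite prefix $w_k$ satisfies $\omega\sim w_k\omega'$ with $\omega'$ an infinite reduced word of the standard parabolic $(W_T,T)$; connectedness of $\omega$ makes $(W_T,T)$ irreducible, and it is infinite since $\omega'$ is. From $N(\omega)=N(w_k)\sqcup w_k(N(\omega'))$ and the fact that $w_k$ acts linearly on $V$, hence injectively and continuously on the rays through $\h\Phi$, the set $\acc(\h A)$ is the image of the accumulation set of the normalized inversion set of $\omega'$ under the ray-action of $w_k$. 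So everything comes down to showing that an infinite reduced word of an irreducible infinite Coxeter system has a single accumulation point, i.e.\ that the sequence $\h\beta_i$ of~\eqref{eq:InfInv} converges. This is automatic in the affine case, where $E$ is a singleton, but in general \emph{this is the main obstacle}. The two routes I would try are: (a) deduce it from the identification of commutation classes of infinite reduced words with points of the Tits boundary (Lam--Thomas~\cite{lam_infinite_2013}), together with the compatibility of the Tits boundary with limit roots from~\cite{hohlweg_asymptotical_2014,dyer_imaginary2_2013}; or (b) a direct contraction argument showing that $\h{N(w_i)}$ is trapped in a decreasing sequence of convex subsets of $\conv(\h\Delta)$ whose intersection is a single point. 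Both routes require a sharper understanding of how $\conv(E)$ meets the faces of $\conv(\h\Delta)$, the same difficulty flagged after Question~\ref{conj:3}.

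\emph{The implication $\Leftarrow$.} Assume $A\in\mathcal B$ is biconvex, $\acc(\h A)=\{x\}$, and $\conv(\h A)\cap\conv(E)=\varnothing$; then $A$ is infinite. The key step is that every finite $F\subseteq A$ lies in a finite biclosed subset of $A$. Since $\conv(\h F)$ is a polytope disjoint from the compact set $\conv(E)$, Hahn--Banach yields a linear form $\rho$ with $\rho>0$ on $\h F$ and $\rho<0$ on $\conv(E)$; the set $A_F:=\{\beta\in\Phip\mid \rho(\beta)\geq 0\}$ is separable, hence biclosed, by Lemma~\ref{lem:SepaConvClos}$(iii)$, contains $F$, and is finite because its normalized accumulation points would lie in $E\cap\{\rho\geq 0\}=\varnothing$. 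Then $A_F\cap A$ is closed (an intersection of closed sets) and coclosed (its complement is $A_F^c\cup A^c$, a union of coclosed sets), hence a finite biclosed subset of $A$ containing $F$. Applying this to $F=B_1\cup B_2$ shows that the finite biclosed subsets of $A$ form an upward directed family with union $A$; by Proposition~\ref{prop:biclosed} these are the inversion sets of a directed subset of $(W,\leq)$, and since this family is countable ($\Phip$ is countable) a standard extraction gives an increasing cofinal sequence $v_1\leq v_2\leq\cdots$ with $\bigcup_k N(v_k)=A$ and $\ell(v_k)\to\infty$. Writing $v_{k+1}=v_k u^{(k)}$ reduced and concatenating a reduced word for $v_1$ with reduced words for the $u^{(k)}$ produces $\omega\in\mathcal W$ all of whose prefixes are reduced and cofinal among the $v_k$, so $N(\omega)=A$.

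\emph{Connectedness.} It remains to prove that this $\omega$ is connected, and I would do this by establishing once and for all that $\omega\in\mathcal W^\infty$ is connected if and only if $\acc(\h{N(\omega)})$ is a singleton; the ``only if'' is the hard lemma of the first stage, and the ``if'' closes both the converse here and the last gap of that stage. For the ``if'', suppose $\omega$ is not connected; let $T$ be its eventual support, $T_1$ a connected component of the induced subgraph, and $T_2=T\setminus T_1\neq\varnothing$. After a finite prefix $w_k$, the word $\omega$ splits into infinite reduced words of $W_{T_1}$ and of $W_{T_2}$ (the projection of a reduced word of a direct product to a factor is reduced, and a letter used infinitely often forces that factor to be infinite), so both $W_{T_1}$ and $W_{T_2}$ are infinite; their nonempty limit sets $E(\Phi_{T_1})$ and $E(\Phi_{T_2})$ lie in the disjoint faces $\conv(\h\Delta_{T_1})$ and $\conv(\h\Delta_{T_2})$ of the simplex $\conv(\h\Delta)$ (in the classical representation), and each is reached as an accumulation set of the corresponding part of the normalized inversion set of $\omega'$. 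Transporting by the ray-action of $w_k$ then exhibits two distinct accumulation points of $\h{N(\omega)}$, contradicting $\acc(\h{N(\omega)})=\{x\}$. Hence $\omega$ is connected, completing the argument modulo the convergence lemma.
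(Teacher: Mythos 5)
The genuine gap is the one you flag yourself, and it is not a routine verification but the central open problem: the claim that for a connected infinite reduced word the sequence $(\h\beta_i)$ of~(\ref{eq:InfInv}) converges, equivalently that $\acc(\h N(\omega))$ is a singleton. This is precisely the paper's Conjecture~\ref{conj:LimN}, which the authors leave open in general and verify only for affine types, for Lorentzian root systems (signature $(n-1,1)$) via \cite{chen_limit_2014}, and hence for all infinite systems of rank $\le 3$ (Remark~\ref{rem:Lor}). Your routes (a) and (b) are proposals rather than arguments; route (b) in particular runs into exactly the difficulty the paper identifies after Question~\ref{conj:3}, namely that limit roots do not restrict well to root subsystems and that the facial structure of $\conv(E)$ is not yet sufficiently understood. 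Since the entire left-to-right implication of the statement rests on this lemma, the proposal does not prove the conjecture; indeed the statement is stated in the paper as a conjecture precisely because this step is missing, so there is no proof in the paper to compare against.

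The parts you do carry out are sound and in places go beyond what the paper records. The easy half of the forward implication (biconvexity via Proposition~\ref{prop:Infinite} and $\conv(\h N(\omega))\cap\conv(E)=\varnothing$ via Corollary~\ref{prop:ImN}) is exactly Remark~\ref{rem:CePa}(a). Your right-to-left construction --- separating a finite $F\subseteq A$ from $\conv(E)$ by a linear hyperplane as in the proof of Theorem~\ref{thm:Main1}, intersecting the resulting finite biclosed set $A_F$ with $A$ to obtain a finite biclosed subset of $A$ containing $F$ (closed as an intersection of closed sets, coclosed as a union of coclosed sets), and extracting a cofinal chain to assemble $\omega$ --- appears correct and is more than the paper claims, since the authors only report that they found no counterexample to that direction. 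Likewise your argument that a disconnected word yields at least two accumulation points, one in each of the disjoint faces $\conv(\h\Delta_{T_1})$ and $\conv(\h\Delta_{T_2})$, is the general form of the example in Remark~\ref{rem:CePa}(d). But none of this rescues the forward implication: as written, your proposal establishes the conjecture only modulo Conjecture~\ref{conj:LimN}, i.e., unconditionally only in the affine, Lorentzian and rank~$\le 3$ cases.
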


\begin{remark} \label{rem:CePa}
\begin{enumerate}[(a)]
\item  Assume that $A=N(\omega)$, for some $\omega\in \mathcal W^\infty$. Then  $A$ is biconvex  and $\conv(\h A)\cap \conv(E)=\varnothing$, by Proposition~\ref{prop:Infinite} and Corollary~\ref{prop:ImN}.

\item By a theorem of Cellini and Papi~\cite[Theorem~3.12]{cellini_structure_1998}, this conjecture is true in affine types, as mentioned in~\cite[Proposition~2.6]{baumann_affine_2014}. Indeed in the affine type, $E=\{\h\delta\}$ is a singleton, and $\delta$ is called the {\em imaginary root}. So if $\cone(A)$ does not contain $\delta$,  we apply Cellini and Papi's result and obtain an infinite reduced word $\omega$ such that $A=N(\omega)$.  Moreover the accumulation points of an infinite $\h A$ is in $E$, so is a singleton.

\item This conjecture is true for infinite dihedral groups. 

\item As was pointed out by one of the anonymous referee, the definition of connected infinite reduced word is needed in order for the conjecture above, and the two conjectures below, to hold. Indeed, let for instance $(W,S)$ to be the infinite Coxeter system with Coxeter graph
\begin{center}
\begin{tikzpicture}[sommet/.style={inner sep=2pt,circle,draw=blue!75!black,fill=blue!40,thick},] 
      \coordinate (ancre) at (0,-1.5);
      \node[sommet,label=below:$s_1$] (t1) at (ancre) {};
      \node[sommet,label=below :$s_2$] (t2) at ($(ancre)+(1,0)$) {} edge[thick] node[auto,swap] {} (t1);
      \node[sommet,label=below :$s_3$] (t3) at ($(ancre)+(2,0)$) {} edge[thick] node[auto,swap] {} (t2);
    	 \node[sommet,label=below :$s_4$] (t4) at ($(ancre)+(3,0)$) {} edge[thick] node[auto,swap] {} (t3);
      \node[sommet,label=below :$s_5$] at ($(ancre)+(4,0)$) {} edge[thick] node[auto,swap] {} (t4);
            \node[black] at (0.5,-1.25) {$\infty$};
            \node[black] at (3.5,-1.25) {$\infty$};
\end{tikzpicture} 
\end{center}
Then the inversion set of the infinite and not connected reduced word $(s_1s_2s_4s_5)^\infty$ has two accumulation points: the one corresponding to $N((s_1s_2)^\infty)$ and the one corresponding to $N((s_4s_5)^\infty)$. 

\end{enumerate}
\end{remark}

The left-to-right implication in Conjecture~\ref{conj:CharInf} follows from the following weaker conjecture.  

\begin{conject}\label{conj:LimN}  Let $w$ be a  connected  infinite reduced word. Then Acc$(\h N(w))$ is a singleton.
\end{conject}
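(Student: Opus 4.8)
The plan is to analyze directly the sequence of roots $\beta_i=w_{i-1}(\alpha_{s_i})$ attached to the infinite reduced word $\omega=s_1s_2\cdots$, and to show that the normalized sequence $\h\beta_i$ can have only one accumulation point. First I would recall that $\h N(\omega)$ lives in the compact polytope $\conv(\h\Delta)$, so $\acc(\h N(\omega))$ is a nonempty compact subset of the isotropic cone $\h Q$, and by Corollary~\ref{prop:ImN} it is disjoint from $\conv(E)$ only in the sense that the \emph{convex hull} of $\h N(\omega)$ misses $\conv(E)$; the accumulation points themselves, however, must lie in $E$ (being limit points of normalized roots). So the task reduces to showing $|\acc(\h N(\omega))\cap E|=1$.

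The key mechanism I would exploit is the structure of the tail of $\omega$: let $S_\infty\subseteq S$ be the set of generators appearing infinitely often, and let $W_\infty=\langle S_\infty\rangle$. Connectedness of $\omega$ says precisely that $(W_\infty,S_\infty)$ is irreducible. After a large prefix $w_k$ (chosen so that every generator past position $k$ lies in $S_\infty$), the remaining letters $s_{k+1}s_{k+2}\cdots$ form an infinite reduced word $\omega^{(k)}$ in $W_\infty$, and $\beta_i = w_k\bigl(\beta^{(k)}_{i-k}\bigr)$ for $i>k$, where $\beta^{(k)}_j$ are the inversion roots of $\omega^{(k)}$. Since $w_k$ acts as a fixed linear isomorphism, $\acc(\h N(\omega)) = \h{w_k}\bigl(\acc(\h N(\omega^{(k)}))\bigr)$ (projectivized), so I may assume from the start that \emph{every} generator in $S$ appears infinitely often and $(W,S)$ is irreducible. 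Now the normalized roots $\h\beta_j$ all lie in the polytope $\conv(\h\Delta)$; I would argue that because each $s\in S$ is applied infinitely often, the sequence $(\h\beta_j)$ is "forced inward": whenever $s_{j+1}=s$, one has $\beta_{j+1}=s_j\cdots\beta_j$-type relations showing $B(\alpha_{s_{j+1}},\beta_j)<0$, i.e. $\beta_j$ lies strictly on the negative side of the wall of $s_{j+1}$, and applying $s_{j+1}$ moves $\h\beta_j$ across that wall. Iterating over a full "sweep" in which every generator is used, the normalized roots are pushed into a region whose intersection with $\h Q$ shrinks; in the limit they can only accumulate on the unique point of $\h Q$ fixed (projectively) by the dynamics of the irreducible infinite group $W$ — and by the results of \cite{hohlweg_asymptotical_2014,dyer_imaginary2_2013} on limit roots of irreducible infinite Coxeter systems, I would identify this with the statement that $E$ itself, while possibly large, meets the relevant face in a single point.

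More concretely, the cleanest route is probably via Dyer's imaginary cone: the convex hull $\conv(\acc(\h N(\omega)))$ is a face-like subset of $\conv(E)$; if it contained two distinct limit roots $x\ne y$, then (using that $\h N(\omega)$ is separable, Proposition~\ref{prop:Infinite}(i)) the whole segment $[x,y]$ would be approached by normalized roots of $\omega$, so infinitely many $\h\beta_j$ would cluster near interior points of $[x,y]$. I would then contradict this using the dichotomy recalled in \S\ref{sse:Normal}: for any two roots $\gamma,\delta$ of $\omega$, the line $L(\h\gamma,\h\delta)$ either misses $Q$ (finite dihedral subgroup) or meets it in at most two points. Using that the subword of $\omega$ built from $s_\gamma,s_\delta$ is itself a reduced word in an infinite dihedral (or finite) reflection subgroup, one shows the $\h\beta_j$ coming from this dihedral piece converge to a single isotropic point, not two; combining these across the irreducible diagram via a connectedness/chaining argument pins down a single global accumulation point.

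The main obstacle, and the reason this is only conjectural in the paper, is the control of \emph{mixed} accumulation: the normalized roots $\h\beta_j$ need not be monotone along any single line or within any single dihedral subsystem, because the letters interleave, and the geometry of $E$ for a general irreducible indefinite $W$ is not well understood — in particular $E$ can be infinite and its face structure (the object one would really need, as flagged in the discussion after Question~\ref{conj:3}) is not known. So the hard part is precisely to rule out that the interleaving of infinitely many distinct generators causes $(\h\beta_j)$ to oscillate between two or more limit roots; doing so rigorously seems to require exactly the continuation of the study of faces of $\conv(E)$ that the paper says is missing, which is why Conjecture~\ref{conj:LimN} is stated rather than proved.
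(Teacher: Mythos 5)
There is a genuine gap here, and in fact the statement you were asked to prove is stated in the paper as Conjecture~\ref{conj:LimN} precisely because no proof is known: the paper offers no argument for it beyond the special cases recorded in Remark~\ref{rem:Lor} (affine types, where $E$ is a point; Lorentzian types, via the strict convexity of $\h Q$ in Chen--Labb\'e; and rank $\leq 3$, which is always affine or Lorentzian). Your proposal correctly sets up the sound preliminary reductions --- accumulation points of $\h N(\omega)$ lie in $E$, the problem is equivalent to convergence of the bounded sequence $(\h\beta_n)$, and one may pass to the tail of $\omega$ so that every letter occurs infinitely often, using $N(w)=N(u)\sqcup u(N(v))$ and the continuity of the projective action --- but the two mechanisms you then invoke do not close. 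The ``forced inward sweep'' argument is not made precise: the sign condition you write for $B(\alpha_{s_{j+1}},\beta_j)$ is not what reducedness gives you (reducedness controls the sign of $w_j(\alpha_{s_{j+1}})$, not a pairwise form value between consecutive inversion roots), and no quantitative contraction is established. The dihedral argument also does not chain up: two roots $\beta_i,\beta_j$ of $\omega$ do generate a dihedral reflection subgroup whose normalized roots lie on a line meeting $Q$ in at most two points, but the inversion roots of $\omega$ restricted to such a pair need not form the inversion sequence of a reduced word in that subgroup, and even if each dihedral slice converged, nothing prevents different slices from converging to different points of $E$; ruling out this oscillation is exactly the open problem.

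Your final paragraph concedes all of this, so what you have written is an accurate diagnosis of why the conjecture is open rather than a proof of it. The assertion that ``$E$ meets the relevant face in a single point'' is the crux and is unsupported: for an irreducible indefinite system of rank $\geq 4$, $E$ can be an infinite (indeed fractal-like) subset of $\h Q$, and the face structure of the imaginary convex body $\conv(E)$ that such an argument would require is precisely the missing ingredient the paper flags after Question~\ref{conj:3} and in its proposed approach via the tiling $\conv(E)=\overline{W\cdot K}$ (Conjecture~\ref{conj:InfiniteWord}). If you want to make genuine progress, the most promising concrete target suggested by the paper and by \cite{chen_limit_2014} is to replace strict convexity of $\h Q$ with \cite[Corollary 6.8(ii)]{dyer_imaginary2_2013} together with an analogue of \cite[Proposition 2.1]{chen_limit_2014}; as it stands, your text should be presented as a discussion of the obstruction, not as a proof.
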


\begin{remark} \label{rem:Lor}
\begin{enumerate}[(a)]

\item This conjecture is equivalent to showing that the injective sequence $(\h\beta_n)_{n\in\mathbb N^*}$ converges, where $\h\beta_n$ is defined in Equation~(\ref{eq:InfInv}).  Indeed, the sequence $(\h\beta_n)_{n\in\mathbb N^*}$ is bounded, and therefore the equivalence follows from the following general topological fact based on the Bolzano--Weierstra{\ss} Theorem: a bounded sequence in a  compact  metric space converges if and only if it has a unique accumulation point.

\item Conjecture~\ref{conj:LimN} is obviously true for irreducible affine types, since $E$ is a singleton.

\item  Conjecture~\ref{conj:LimN}, as well as the left-to-right statement in Conjecture~\ref{conj:CharInf}, are true in is true for based root systems of Lorentzian type, i.e., the signature of the root system is $(n-1,1)$, as shown by Chen and the second author in~\cite[Theorem~2.8]{chen_limit_2014}. Note that in the proof, the authors use the fact that $\h Q$ is strictly convex in this case and their approach works also for linearly dependent bases, see~\cite[Remark~2.4]{chen_limit_2014}. Maybe the argument of strict convexity of $Q$  could be replaced in full generality by using \cite[Corollary 6.8 (ii)]{dyer_imaginary2_2013} and a result similar to \cite[Proposition 2.1]{chen_limit_2014}. In the Lorentzian case, this gives an alternate - but less straightforward - proof of \cite[Theorem 2.8]{chen_limit_2014}.  

\item Conjecture~\ref{conj:LimN} is true for rank $2$ and $3$ infinite root systems, since they are all affine or Lorentzian in this case. So  the left-to-right statement in Conjecture~\ref{conj:CharInf} is true in those cases. We could not find any counterexamples to the right-to-left statement of Conjecture~\ref{conj:CharInf}.

\end{enumerate}
\end{remark}

\subsection{Limit sets of inversion sets and paths in the imaginary cone}

It would be interesting to better understand  biclosed sets in relation with their limit roots and subsets of the imaginary cone. Here, we propose an approach to solve Conjecture~\ref{conj:LimN}. 

The imaginary convex body $\conv(E)$ has a {\em tiling} parameterized by $W$: let $K=\{u\in\conv(\h \Delta)\,|\, B(u,\alpha_s)\leq 0, \ \forall s\in S\}$, then $\conv(E)=\overline{W\cdot K}$, where $w\cdot x=\widehat{w(x)}$ is well-defined  on $\conv(E)$ for any $w\in W$, see~\cite{dyer_imaginary_2013,dyer_imaginary2_2013}. 

Assume from now on that $(\Phi,\Delta)$ is an irreducible indefinite based root system (i.e. not finite nor affine).  So the relative interior of $K$ is nonempty and open in the linear span of $K$.  Take $z$ in the relative interior of $K$, i.e., $B(z,\alpha)< 0$ for all $\alpha\in\Delta$.  Then $\omega \in \mathcal W$  can be seen, with the same notations as in \S\ref{see:Inf}, as a sequence   $z$, $w_1\cdot z$, $w_2\cdot z$, $\dots$, $w_n\cdot z$ within $\conv(E)$. Note that $w_n\cdot z$ is an injective sequence, since $w_n\cdot z\in w\cdot K$ and $K$ is a fundamental domain for the action of $W$ on $W\cdot K$ (see Figure~\ref{fig:Imaginary}).

\begin{conject}\label{conj:InfiniteWord} Let $\omega$ be a  connected  infinite reduced word and $(w_n)_{n\in\mathbb N^*}$ the increasing sequence of its prefixes. Let $z$ in the relative interior of $K$. Then there is $x\in E$ such that 
$$
\acc((w_n\cdot z)_{n\in\mathbb N})=\acc(\h N(w))=\{x\}.
$$ 
In other words both sequences $(w_n\cdot z)_{n\in\mathbb N^*}$ and $(\h \beta_n)_{n\in\mathbb N^*}$ converge to $x\in E$.
\end{conject}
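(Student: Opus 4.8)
The plan is to reduce the statement to Conjecture~\ref{conj:LimN} together with a geometric comparison between the two sequences $(\h\beta_n)$ and $(w_n\cdot z)$. The first observation is that both sequences lie in the compact metric space $\conv(E)$ (for $(w_n\cdot z)$ this is by construction, since $w_n\cdot z\in w_n\cdot K\subseteq\overline{W\cdot K}=\conv(E)$; for $(\h\beta_n)$ it is because $\h\Phi\subseteq\conv(\h\Delta)$ and, by Corollary~\ref{prop:ImN}, the accumulation points of $\h N(w)$ lie in $\conv(E)\cap\h Q = E$). So by the Bolzano--Weierstra{\ss} argument recalled in Remark~\ref{rem:Lor}(a), it suffices to show that each sequence has a unique accumulation point and that these coincide.

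Granting Conjecture~\ref{conj:LimN}, we already know $\acc(\h N(w))=\{x\}$ for some $x\in E$. The remaining task is to prove $\acc((w_n\cdot z)_{n\in\mathbb N})=\{x\}$. Here the natural approach is to compare, for a fixed root $\beta_n = w_{n-1}(\alpha_{s_n})$, the two points $\h\beta_n = \widehat{w_{n-1}(\alpha_{s_n})}$ and $w_n\cdot z = \widehat{w_n(z)}$. The key quantitative input should be a uniform Lipschitz-type estimate: since $z$ lies in the relative interior of $K$ and $\alpha_{s_n}\in\Delta$, the points $z$ and $\alpha_{s_n}$ stay at bounded ``projective distance'', and one wants to show that applying the isometry $w_{n-1}$ (or $w_n$) and renormalizing does not pull apart normalized images that start close, \emph{provided} one stays away from the isotropic cone $Q$ by a controlled amount. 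Concretely, I would use the description of the $W$-action on normalized roots from~\cite{hohlweg_asymptotical_2014,dyer_imaginary2_2013}: writing $w(v)$ and renormalizing is a projective transformation, and on the compact set $\conv(\h\Delta)\setminus(\text{neighbourhood of }Q)$ these behave with controlled distortion. The point is that $\beta_n$ and its neighbours in $N(w_n)$ accumulate to $x\in Q$, forcing $w_{n-1}$ to contract a whole neighbourhood of $\h\alpha_{s_n}$ towards $x$; since $\h z$ lies in that fixed neighbourhood of $\h\Delta$, its image $w_{n-1}\cdot z$ (and hence $w_n\cdot z$, which differs by one more generator) is dragged to $x$ as well.

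More precisely, the step I would carry out is: fix $\varepsilon>0$; by Conjecture~\ref{conj:LimN} there is $N$ such that $d(\h\beta_n,x)<\varepsilon$ for all $n\ge N$; then, because $\conv(\h N(w_n))\cap\conv(E)=\varnothing$ by Theorem~\ref{thm:Main1} while the normalized roots $\h\beta_{N},\dots,\h\beta_n$ cluster near $x\in E$, the whole simplex $\conv(\h\Delta)$ gets mapped by $w_{n-1}$ into a small neighbourhood of $x$ on the side of the separating hyperplane; since $\h z\in\conv(\h\Delta)$, this yields $d(w_{n-1}\cdot z, x)<C\varepsilon$ for a constant $C$ depending only on the geometric representation, and similarly for $w_n\cdot z$. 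Letting $\varepsilon\to 0$ gives $w_n\cdot z\to x$.

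The main obstacle is precisely the distortion control near $Q$: an element $w_{n-1}$ of large length is a $B$-isometry with huge operator norm in the usual Euclidean sense, so the renormalization map can in principle separate points of $\conv(\h\Delta)$ unboundedly, and one must exploit that $w_{n-1}$ sends $\cone(\Delta)$ into an increasingly ``thin'' cone hugging the ray through $x$ — this is what keeps $w_{n-1}\cdot z$ and $\h\beta_n$ together. Making this rigorous in full generality seems to require exactly the kind of fine control on the faces of $\conv(E)$ and on the dynamics of $W$ on $\conv(E)$ discussed around Remark~\ref{rem:Lor}, which is why the statement is left as a conjecture; in the Lorentzian case the strict convexity of $\h Q$ used in~\cite{chen_limit_2014} supplies this control, and in rank $\le 3$ everything is affine or Lorentzian, so the conjecture holds there. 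One should also verify the easy direction that $\acc((w_n\cdot z))\subseteq E$: any limit of $w_n\cdot z$ is a limit of the bounded orbit $W\cdot z$, which lands in $\conv(E)$, and an argument analogous to the proof of Corollary~\ref{prop:ImN} (using that infinitely many $w_n\cdot z$ accumulate) forces such a limit onto $\h Q$, hence into $E$.
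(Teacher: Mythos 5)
The statement you are trying to prove is not proved in the paper at all: it is stated as Conjecture~\ref{conj:InfiniteWord}, and the authors explicitly leave it open, remarking only that it ``has a taste of'' \cite[Theorem 3.11]{dyer_imaginary2_2013} and that it would slightly improve \cite[Corollary 6.15]{dyer_imaginary2_2013}. Your write-up is a strategy sketch rather than a proof, and to your credit you say so; but since the task is to produce a proof, the gaps must be named. First, your argument is conditional on Conjecture~\ref{conj:LimN}, which is itself open outside the affine, Lorentzian and rank~$\leq 3$ cases, so even if the rest worked you would only have a conditional statement. Second, the central step --- that because $\h\beta_N,\dots,\h\beta_n$ cluster near $x$, the element $w_{n-1}$ maps the \emph{whole} simplex $\conv(\h\Delta)$ into a small neighbourhood of $x$ --- is asserted, not proved. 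Knowing that finitely many roots of $N(w_{n-1})$ normalize near $x$ does not control the image of the full chamber cone $w_{n-1}(\cone(\Delta))$; establishing that this cone becomes projectively thin around the single ray through $x$ is essentially equivalent to the convergence one is trying to prove, so the argument is circular at its core. The separation statement $\conv(\h N(w_n))\cap\conv(E)=\varnothing$ from Theorem~\ref{thm:Main1} constrains where the inversion set sits, but says nothing quantitative about the distortion of the renormalized action of $w_{n-1}$ on points of $\conv(\h\Delta)$ away from the roots.

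Third, the direction you call ``easy'' --- that $\acc((w_n\cdot z)_n)\subseteq E$ --- is not easy and is in fact open in general: the paper cites \cite[Remark 6.16(b)]{dyer_imaginary2_2013} for the fact that it is unknown whether $\acc(W\cdot x)=E$ for $x\in W\cdot K$; only the inclusion $\acc(W\cdot x)\supseteq E$ is known. Your proposed adaptation of the proof of Corollary~\ref{prop:ImN} does not transfer, because that proof exploits that a neighbourhood of a limit root contains infinitely many \emph{roots} (hence infinitely many elements of a finite set, a contradiction); the orbit points $w_n\cdot z$ are not roots and no analogous counting argument is available. So the inclusion $\acc((w_n\cdot z)_n)\subseteq \h Q$ needs a genuinely new input, which is part of why the authors leave the statement as a conjecture and suggest CAT$(0)$/Davis-complex techniques as a possible route.
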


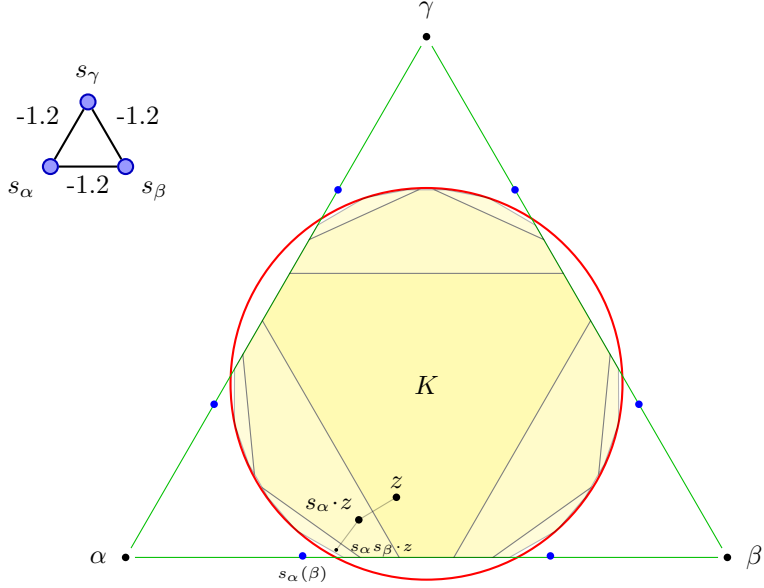
\begin{figure}
\begin{tikzpicture}
	[scale=2,
	 q/.style={red,thick},
	 racine/.style={blue},
	 racinesimple/.style={black},
	 racinedih/.style={blue},
	 sommet/.style={inner sep=2pt,circle,draw=blue!75!black,fill=blue!40,thick},
	 rotate=0]

\def\grosseur{0.0125}
\def\grosseursimple{0.025}

\def\grosseurdih{0.0075}

\draw[q] (2, 1.15470053837925) circle (1.30267789455786) ;

\node[label=left :{$\alpha$}] (a) at (0.000000000000000,0.000000000000000) {};
\fill[racinesimple] (0.000000000000000,0.000000000000000) circle (\grosseursimple);\node[label=right :{$\beta$}] (b) at (4.00000000000000,0.000000000000000) {};
\fill[racinesimple] (4.00000000000000,0.000000000000000) circle (\grosseursimple);\node[label=above :{$\gamma$}] (g) at (2.00000000000000,3.46410161513775) {};
\fill[racinesimple] (2.00000000000000,3.46410161513775) circle (\grosseursimple);
\draw[green!75!black] (a) -- (b) -- (g) -- (a);
\fill[racine] (1.17647058823529,0.0100000000000000) circle (\grosseursimple);
\fill[racine] (1.41176470588235,2.44524819892077) circle (\grosseursimple);
\fill[racine] (2.82352941176471,0.0100000000000000) circle (\grosseursimple);
\fill[racine] (3.41176470588235,1.01885341621699) circle (\grosseursimple);
\fill[racine] (0.588235294117647,1.01885341621699) circle (\grosseursimple);
\fill[racine] (2.58823529411765,2.44524819892077) circle (\grosseursimple);

\draw  (1.17647058823529,-0.1100000000000000) node{{\tiny $s_\alpha(\beta)$}};

\coordinate (ancre) at (-0.5,2.6);
\node[sommet,label=below left:$s_\alpha$] (alpha) at (ancre) {};
\node[sommet,label=below right :$s_\beta$] (beta) at ($(ancre)+(0.5,0)$) {} edge[thick] node[auto] {-1.2} (alpha);
\node[sommet,label=above:$s_\gamma$] (gamma) at ($(ancre)+(0.25,0.43)$) {} edge[thick] node[auto,swap] {-1.2} (alpha) edge[thick] node[auto] {-1.2} (beta);

\filldraw[draw= black ,fill= yellow ,opacity= 0.300000000000000 ]
(2.909090908, 1.8895099722) --
(1.0909090908, 1.8895099722) --
(0.909090909, 1.5745916434) --
(1.8181818182, 0.0) --
(2.181818182, 0.0) --
(3.09090909, 1.5745916426) --
cycle ;


\filldraw[draw= black ,fill= yellow!80 ,opacity= 0.300000000000000 ]
(0.855614973194145, 0.555738227281955)  -- 
(0.779220779101243, 1.34964998007078)  -- 
(0.909090909001093, 1.57459164340189)  -- 
(1.81818181816364, 0.000000000000000)  -- 
(1.55844155834879, 0.000000000000000)  -- 
(0.909090909200340, 0.463115189191957)  -- 
cycle ;

\filldraw[draw= black ,fill= yellow!80 ,opacity= 0.300000000000000 ]
(3.22077922077631, 1.34964997879825)  -- 
(3.14438502678509, 0.555738227066197)  -- 
(3.09090909093551, 0.463115189208216)  -- 
(2.44155844154916, 0.000000000000000)  -- 
(2.18181818163636, 0.000000000000000)  -- 
(3.09090909053703, 1.57459164166984)  -- 
cycle ;

\filldraw[draw= black ,fill= yellow!80 ,opacity= 0.300000000000000 ]
(2.90909090835412, 1.88950997158664)  -- 
(1.09090909044588, 1.88950997158664)  -- 
(1.22077922060431, 2.11445163513458)  -- 
(1.94652406417647, 2.44524819892077)  -- 
(2.05347593588235, 2.44524819892077)  -- 
(2.77922077811584, 2.11445163554275)  -- 
cycle ;


\filldraw[draw= black ,fill= yellow!60 ,opacity= 0.300000000000000 ]
(2.88170407005843, 0.197647486673624)  -- 
(3.05271199391595, 0.396955876465156)  -- 
(3.09090909093583, 0.463115189208773)  -- 
(2.44155844156772, 0.000000000000000)  -- 
(2.55161787368442, 0.000000000000000)  -- 
(2.85271933305869, 0.171898651017698)  -- 
cycle ;

\filldraw[draw= black ,fill= yellow!60 ,opacity= 0.300000000000000 ]
(1.57522826510843, 2.38457808676480)  -- 
(1.27580893667477, 2.20976589928114)  -- 
(1.22077922060375, 2.11445163513362)  -- 
(1.94652406416578, 2.44524819892077)  -- 
(1.87012987010259, 2.44524819892077)  -- 
(1.61201977942229, 2.39680518749896)  -- 
cycle ;

\filldraw[draw= black ,fill= yellow!60 ,opacity= 0.300000000000000 ]
(0.947288006310731, 0.396955876200963)  -- 
(1.11829592999777, 0.197647486608127)  -- 
(1.14728066692370, 0.171898651013539)  -- 
(1.44838212635149, 0.000000000000000)  -- 
(1.55844155853432, 0.000000000000000)  -- 
(0.909090909358289, 0.463115188918381)  -- 
cycle ;

\filldraw[draw= black ,fill= yellow!60 ,opacity= 0.300000000000000 ]
(2.72419106255821, 2.20976589972896)  -- 
(2.42477173484066, 2.38457808679453)  -- 
(2.38798022062887, 2.39680518749691)  -- 
(2.12987012986740, 2.44524819892077)  -- 
(2.05347593577540, 2.44524819892077)  -- 
(2.77922077784185, 2.11445163601732)  -- 
cycle ;

\filldraw[draw= black ,fill= yellow!60 ,opacity= 0.300000000000000 ]
(0.855614973298298, 0.555738227101555)  -- 
(0.779220779281918, 1.34964998038372)  -- 
(0.724191063174665, 1.25433571593848)  -- 
(0.722508932116326, 0.907624877326688)  -- 
(0.730315709398947, 0.869648941020573)  -- 
(0.817417876158433, 0.621897540049636)  -- 
cycle ;

\filldraw[draw= black ,fill= yellow!60 ,opacity= 0.300000000000000 ]
(3.22077922059563, 1.34964997911119)  -- 
(3.14438502668094, 0.555738226885798)  -- 
(3.18258212374129, 0.621897539719493)  -- 
(3.26968429060517, 0.869648941040586)  -- 
(3.27749106788816, 0.907624877348487)  -- 
(3.27580893678228, 1.25433571494016)  -- 
cycle ;

\draw  (2, 1.15) node{$K$};

\fill[racinesimple] (1.8, 0.4) circle (\grosseursimple);
\draw  (1.8, 0.5) node{$z$};

\draw  (1.35, 0.35) node{{\small $s_{\alpha}\!\cdot\!z$}};
\fill[racinesimple] (1.55, 0.25) circle (\grosseursimple);

\draw  (1.7, 0.05) node{{\tiny{$s_{\alpha}s_\beta\!\cdot\!z$}}};
\fill[racinesimple] (1.4, 0.05) circle (\grosseur);

\filldraw[draw= black ,fill= black!60 ,opacity= 0.300000000000000 ]
(1.8, 0.4)--(1.55, 0.25);
\filldraw[draw= black ,fill= black!60 ,opacity= 0.300000000000000 ]

(1.55, 0.25)--(1.4, 0.05);

\end{tikzpicture}
\caption{Illustration of Conjecture~\ref{conj:InfiniteWord}: the inversion set of $N((s_\alpha s_\beta)^\infty)$ has an  accumulation point, which is the limit of the sequence $z,s_\alpha\cdot z,s_\alpha s_\beta\cdot z,\dots$. This is the intersection point of $Q$ with $[\h\alpha,\h\beta]$ that is the closest to $\h\alpha$.  \label{fig:Imaginary}}
\end{figure}

\begin{remark}
\begin{enumerate}[(a)]
\item Conjecture~\ref{conj:InfiniteWord} has a taste of \cite[Theorem 3.11]{dyer_imaginary2_2013}. 

\item We may ask  a weaker question: do we have  
$
\acc((w_n\cdot z)_{n\in\mathbb N})= \acc(\h N(w))?
$ 

\item As stated in  \cite[Remark 6.16(b)]{dyer_imaginary2_2013}, we do not know if Acc$(W\cdot x)=E$ for $x\in W\cdot K$ in general. We know that Acc$(W\cdot x)\supseteq E$, see \cite[Corollary 6.15]{dyer_imaginary2_2013}. So Conjecture~\ref{conj:InfiniteWord} slightly improves  the statement of \cite[Corollary 6.15]{dyer_imaginary2_2013}: any sequence of the form $\{w_n\cdot x\}_{n\in\mathbb N}\subseteq W\cdot x$ where $w_n$ are increasing prefixes of an infinite reduced word on $S$ has an accumulation point in $E$.

\item It would be very interesting to fit Lam and Thomas' results~\cite{lam_infinite_2013} in our framework by interpreting the imaginary convex body as a realization of the Davis complex, see~\cite{davis_geometry_2008,abramenko_buildings_2008}.  It would allow the use of tools from CAT$(0)$ spaces to explore Conjecture~\ref{conj:InfiniteWord}.  Assume $(W,S)$ be an irreducible indefinite Coxeter system, otherwise the question below has  trivially a negative answer. In the setting of the Davis complex, the set $S$ is  called the {\em nerve of $(W,S)$}. Let $S'$ be the set of {\em spherical subsets of $S$}: $I\in S'$ if $W_I= \langle I\rangle$ is finite. A way to start is to take a point $z$ in the relative interior of $K$. For any $I \subset S'$, define 
 \[
 	P_I^z:=\conv(W_I\cdot z).
 \]
Then $P_I^z$ is the permutahedron of the finite Coxeter group $W_I$ with based point~$x$. Now take 
\[
	\mathcal D=W\cdot\left(\bigcup_{I\subseteq S'} P^z_I\right)\subseteq \conv(E).
\]
 Finally, assume that~$\mathcal D$ is endowed with the piecewise metric $\mu$ given by the Euclidean metric on each permutahedron $w\cdot P^z_I$, for any $w\in W$. Is $(\mathcal D,\mu)$ a geometric realization of the Davis complex? An approach to answer this problem can be found in~\cite[Appendix B4-B5]{krammer_conjugacy_2009}.

\end{enumerate}
\end{remark}

\section{Relationships between biclosed and biconvex sets}\label{se:OnDef}

We know that for a finite set, the notion of separable, biconvex and biclosed are the same, see Proposition~\ref{prop:FiniteBiclos}. In this final section, we discuss the differences between the definition of  biclosed, biconvex and separable in the case of infinite sets. 

\subsection{On infinite biclosed sets and biconvex sets}\label{sse:BiclosBiconv} We establish in the following examples that biclosed sets are not biconvex in general.  

\begin{ex}[A biclosed set that is not biconvex in a geometric representation of dimension $3$ of a Coxeter group of rank $4$] 
We consider  $(\mathcal U_3,S)$ to be the universal Coxeter system of rank $3$ whose Coxeter graph is: 
\begin{center}
\begin{tikzpicture}[sommet/.style={inner sep=2pt,circle,draw=blue!75!black,fill=blue!40,thick},]
\coordinate (ancre) at (0,0);
\node[sommet,label=below left:$r$] (alpha) at (ancre) {};
\node[sommet,label=below right :$t$] (beta) at ($(ancre)+(1,0)$) {} edge[thick] node[auto] {$\infty$} (alpha);
\node[sommet,label=above:$s$] (gamma) at ($(ancre)+(0.5,0.86)$) {} edge[thick] node[auto,swap] {$\infty$} (alpha) edge[thick] node[auto] {$\infty$} (beta);
\end{tikzpicture}

\end{center}
Let $W'$ the reflection subgroup of $\mathcal U_3$ generated by $S'=\{srs, sts, r,t\}$.  It is not difficult to see that $\Delta'=\{s(\alpha_r), s(\alpha_t), \alpha_r,\alpha_t\}$ is a simple system, with
$$
B( \alpha_r,\alpha_t)=B(s(\alpha_r), s(\alpha_t))=B(\alpha_r,s(\alpha_r))=B(\alpha_t, s(\alpha_t))=-1
$$
and
$$
B(\alpha_r,s(\alpha_t))=B(s(\alpha_r), \alpha_t)=-3.
$$ 
Note that $\Delta'$ is positively linearly independent but not linearly independent. It follows that $(W',S')$ is a universal Coxeter system of rank~$4$ (geometrically represented in dimension $3$). Now consider the standard parabolic subgroup $W_I$ of $(W',S')$ given by $I=\{sts, r,t\}\subseteq S'$, with associated simple system $\Delta_I=\{\alpha_t,\alpha_r, s(\alpha_t)\}$. The associated positive root subsystem $\PhipI=W_I(\Delta_I) \cap \Phi^+$ is a {\em biclosed set in $\Phipprime=W'(\Delta')\cap \Phip$}. But $\PhipI$ {\em is not biconvex in $\Phipprime$}. Indeed $s(\alpha_r)\notin \PhipI$ so $ts(\alpha_r)\notin \PhipI$. But 
$$
ts(\alpha_r)=\alpha_r+2\alpha_s+ 6\alpha_t =  \alpha_r + 5\alpha_t + s(\alpha_t)\in\cone_{\Phi'}(\PhipI).
$$
So $\cone_{\Phi'}(\PhipI)\not = \PhipI$. An illustration is given in Figure~\ref{fig:Univ}.
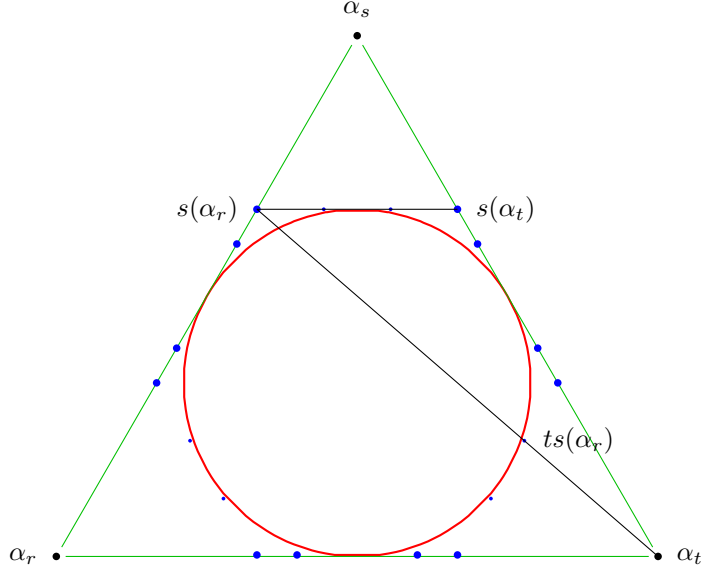
\begin{figure}[!htbp]
	\begin{center}
\begin{tikzpicture}
	[scale=2,
	 q/.style={red,line join=round,thick},
	 racine/.style={blue},
	 racinesimple/.style={black},
	 racinedih/.style={blue},
	 limit/.style={fill=red,draw=black,diamond},
	 limdir/.style={fill=orange,draw=black,diamond},
	 weight/.style={fill=green,draw=black,diamond},
	 sommet/.style={inner sep=2pt,circle,draw=black,fill=blue,thick,anchor=base},
	 rotate=0]

\def\grosseur{0.0125}
\def\grosseursimple{0.025}

\def\grosseurdih{0.0075}


\draw[q] (3.15,1.24) -- (3.15,1.25) -- (3.14,1.33) -- (3.13,1.39) -- (3.12,1.43) -- (3.11,1.47) -- (3.10,1.50) -- (3.09,1.53) -- (3.08,1.56) -- (3.06,1.61) -- (3.05,1.63) -- (3.01,1.71) -- (3.00,1.73) -- (2.97,1.78) -- (2.95,1.81) -- (2.92,1.85) -- (2.89,1.89) -- (2.88,1.90) -- (2.75,2.03) -- (2.74,2.04) -- (2.69,2.08) -- (2.66,2.10) -- (2.63,2.12) -- (2.60,2.14) -- (2.55,2.17) -- (2.53,2.18) -- (2.51,2.19) -- (2.49,2.20) -- (2.44,2.22) -- (2.42,2.23) -- (2.39,2.24) -- (2.36,2.25) -- (2.33,2.26) -- (2.25,2.28) -- (2.21,2.29) -- (2.20,2.29) -- (2.14,2.30) -- (2.13,2.30) -- (1.87,2.30) -- (1.86,2.30) -- (1.80,2.29) -- (1.79,2.29) -- (1.75,2.28) -- (1.67,2.26) -- (1.64,2.25) -- (1.61,2.24) -- (1.58,2.23) -- (1.56,2.22) -- (1.51,2.20) -- (1.49,2.19) -- (1.47,2.18) -- (1.45,2.17) -- (1.40,2.14) -- (1.37,2.12) -- (1.34,2.10) -- (1.31,2.08) -- (1.26,2.04) -- (1.25,2.03) -- (1.12,1.90) -- (1.11,1.89) -- (1.08,1.85) -- (1.05,1.81) -- (1.03,1.78) -- (1.00,1.73) -- (0.990,1.71) -- (0.950,1.63) -- (0.940,1.61) -- (0.920,1.56) -- (0.910,1.53) -- (0.900,1.50) -- (0.890,1.47) -- (0.880,1.43) -- (0.870,1.39) -- (0.860,1.33) -- (0.850,1.25) -- (0.850,1.24) -- (0.850,1.07) -- (0.850,1.06) -- (0.860,0.980) -- (0.870,0.920) -- (0.880,0.880) -- (0.890,0.840) -- (0.900,0.810) -- (0.910,0.780) -- (0.920,0.750) -- (0.940,0.700) -- (1.00,0.580) -- (1.03,0.530) -- (1.05,0.500) -- (1.08,0.460) -- (1.11,0.420) -- (1.12,0.410) -- (1.26,0.270) -- (1.31,0.230) -- (1.40,0.170) -- (1.45,0.140) -- (1.47,0.130) -- (1.49,0.120) -- (1.51,0.110) -- (1.53,0.100) -- (1.58,0.0800) -- (1.61,0.0700) -- (1.67,0.0500) -- (1.70,0.0400) -- (1.74,0.0300) -- (1.79,0.0200) -- (1.85,0.0100) -- (1.86,0.0100) -- (2.14,0.0100) -- (2.15,0.0100) -- (2.21,0.0200) -- (2.26,0.0300) -- (2.30,0.0400) -- (2.33,0.0500) -- (2.39,0.0700) -- (2.42,0.0800) -- (2.47,0.100) -- (2.49,0.110) -- (2.51,0.120) -- (2.53,0.130) -- (2.55,0.140) -- (2.60,0.170) -- (2.69,0.230) -- (2.74,0.270) -- (2.88,0.410) -- (2.89,0.420) -- (2.92,0.460) -- (2.95,0.500) -- (2.97,0.530) -- (3.00,0.580) -- (3.06,0.700) -- (3.08,0.750) -- (3.09,0.780) -- (3.10,0.810) -- (3.11,0.840) -- (3.12,0.880) -- (3.13,0.920) -- (3.14,0.980) -- (3.15,1.06) -- (3.15,1.07) -- cycle;

\node[label=left :{$\alpha_r$}] (a) at (0.000000000000000,0.000000000000000) {};
\fill[racinesimple] (0.000000000000000,0.000000000000000) circle (\grosseursimple);
\node[label=right :{$\alpha_t$}] (b) at (4.00000000000000,0.000000000000000) {};
\fill[racinesimple] (4.00000000000000,0.000000000000000) circle (\grosseursimple);
\node[label=above :{$\alpha_s$}] (g) at (2.00000000000000,3.46410161513775) {};
\fill[racinesimple] (2.00000000000000,3.46410161513775) circle (\grosseursimple);

\draw[green!75!black] (a) -- (b) -- (g) -- (a);

\fill[racine] (1.33333333333333,2.30940107675850) circle (\grosseursimple);
\fill[racine] (1.33333333333333,0.01) circle (\grosseursimple);
\fill[racine] (0.666666666666667,1.15470053837925) circle (\grosseursimple);
\fill[racine] (2.66666666666667,2.30940107675850) circle (\grosseursimple);
\fill[racine] (3.33333333333333,1.15470053837925) circle (\grosseursimple);
\fill[racine] (2.66666666666667,0.01) circle (\grosseursimple);

\fill[racine] (0.800000000000000,1.38564064605510) circle (\grosseursimple);
\fill[racine] (1.60000000000000,0.01) circle (\grosseursimple);
\fill[racine] (1.20000000000000,2.07846096908265) circle (\grosseursimple);
\fill[racine] (2.80000000000000,2.07846096908265) circle (\grosseursimple);
\fill[racine] (0.888888888888889,0.769800358919501) circle (\grosseur);
\fill[racine] (2.88888888888889,0.384900179459750) circle (\grosseur);
\fill[racine] (2.22222222222222,2.30940107675850) circle (\grosseur);
\fill[racine] (2.40000000000000,0.01) circle (\grosseursimple);
\fill[racine] (3.11111111111111,0.769800358919501) circle (\grosseur);
\fill[racine] (1.77777777777778,2.30940107675850) circle (\grosseur);
\fill[racine] (1.11111111111111,0.384900179459750) circle (\grosseur);
\fill[racine] (3.20000000000000,1.38564064605510) circle (\grosseursimple);

\draw (1.33333333333333,2.30940107675850) -- (2.66666666666667,2.30940107675850);

\node[label=left :{$s(\alpha_r)$}] (b) at (1.33333333333333,2.30940107675850) {};
\node[label=right :{$s(\alpha_t)$}] (b) at (2.66666666666667,2.30940107675850) {};

\draw (1.33333333333333,2.30940107675850) -- (4,0);

\node[label=right :{$t s(\alpha_r)$}] (b) at (3.11111111111111,0.769800358919501) {};

\end{tikzpicture}	
	
	\caption{Example of a biclosed set that is not biconvex in a geometric representation of dimension $3$ of  the universal Coxeter group $W'=\mathcal U_4$ of rank $4$; the root system is represented as a root subsystem in the classical geometric representation of the universal group $\mathcal U_3$ of rank $3$.  The circle represent the normalized vectors in the isotropic cone.\label{fig:Univ}}
	\end{center}
\end{figure}
\begin{remark} However, in the case of an irreducible affine Coxeter group, Dyer~\cite{dyer_personal_2013}  informed us that he can prove that biclosed sets are precisely biconvex sets.
\end{remark}
\label{ex:41}
\end{ex}

\begin{ex}[A biclosed set that is not biconvex in the classical geometric representation of a Coxeter group of rank $4$] We consider  $(W,S)$ to be the Coxeter system of rank $4$ whose Coxeter graph is: 
\begin{center}
\begin{tikzpicture}[sommet/.style={inner sep=2pt,circle,draw=blue!75!black,fill=blue!40,thick},]

\node[sommet,label=right:$u$] (u) at (0,0) {};
\node[sommet,label=right:$t$] (t) at (1,-1) {} edge[thick] node[left] {$\infty$} (u);
\node[sommet,label=left:$r$] (r) at (-1,-1) {} edge[thick] (u) edge[thick] (t);
\node[sommet,label=above:$s$] (s) at (0,1) {} edge[thick] (u) edge[thick] (t) edge[thick] node[left] {$\infty$} (r);

\end{tikzpicture}
\end{center}
Using the above example, we show that such a non-biconvex biclosed set lives in the root system of the classical geometric representation of $(W,S)$. Consider $\Delta'=\{r(\alpha_u),r(\alpha_t),s(\alpha_u),s(\alpha_t)\}$ and~$S'=\{rur,rtr,sus,sts\}$. Note that $r(\alpha_u)=\alpha_r+\alpha_u$, $r(\alpha_t)=\alpha_r+\alpha_t$, $s(\alpha_u)=\alpha_s+\alpha_u$, $s(\alpha_t)=\alpha_s+\alpha_t$. We have 
$$
B(r(\alpha_u),r(\alpha_t))=B(s(\alpha_u),s(\alpha_t))=B(r(\alpha_u),s(\alpha_u))=B(s(\alpha_t),r(\alpha_t))=-1
$$
and
$$
B(r(\alpha_u),s(\alpha_t))=B(r(\alpha_t),s(\alpha_u))=-3.
$$
So $(W',S')$ is a universal Coxeter system with geometric representation of dimension $3$ and simple system $\Delta'$ as in~\ref{ex:41}. Let $H=\ker\rho$ be the hyperplane generated by $\Delta'$. We know by a result of Dyer~\cite[Theorem~4.4.]{dyer_reflection_1990} 
that $\Phipprime=\Phip\cap H$ is a positive root system associated to a reflection subgroup of $W$; its associated simple system is the basis of $\cone(\Phipprime)$,  which is equal to $\Delta'$ since $H\cap\cone(\Delta)=\cone(\Delta')$. So $\Phipprime$ is the positive root system of $W'$. Consider now $I=\{rur,rtr,sus\}\subset S'$, so by Example~\ref{ex:41}, $\Phi_I$ is biclosed but not biconvex. Then $A=\PhipI\cup\{\beta\in\Phip\,|\,\rho(\beta)>0\}$ is  biclosed but not biconvex. We illustrate this example in Figure~\ref{fig:Univ2}.

\begin{figure}[!h]
\centering
\begin{tikzpicture}
\node[above right] (img) at (0,0) {\includegraphics[width=0.5\hsize]{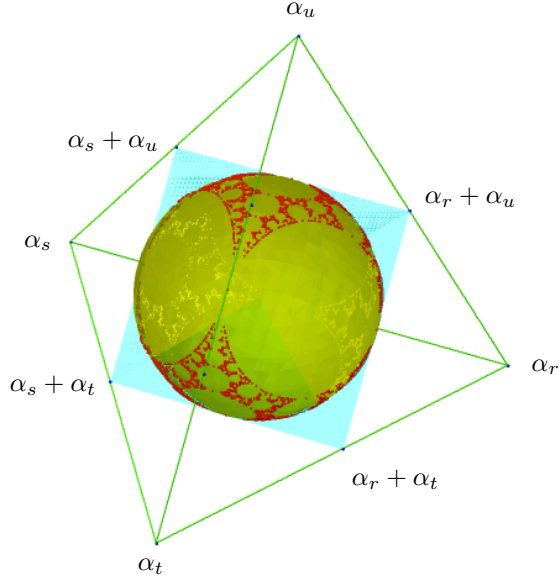}};
\node at (0,4.4) {$\alpha_s$};
\node at (1.5,0.1) {$\alpha_t$};
\node at (0.2,2.5) {$\alpha_s+\alpha_t$};

\node at (3.5,7.5) {$\alpha_u$};
\node at (1,5.75) {$\alpha_s+\alpha_u$};
\node at (5.75,5) {$\alpha_r+\alpha_u$};
\node at (6.75,2.75) {$\alpha_r$};
\node at (4.75,1.25) {$\alpha_r+\alpha_t$};
\end{tikzpicture}
	\caption{Example of a biclosed set that is not biconvex in the classical geometric representation of a Coxeter group of rank $4$.\label{fig:Univ2}}
\end{figure}
\end{ex}

\begin{remark} We do not know of any example of a biclosed set that is not biconvex in a geometric representation of dimension $3$ of a Coxeter system of rank $3$. 
\end{remark}

\subsection{On infinite biconvex and separable sets} We establish in the following example that biconvex sets are not separable.

We consider  $W$ to be of type $\tilde A_3$ generated by $\{s_1,s_2,s_3,s_4\}$, where $(s_1s_3)^2=(s_2s_4)^2=e$. We know that $E=\{\h\delta\}$ is a singleton (the red dot in the center of the tetrahedron in Figure~\ref{fig:AffineA3}). Let $X=\{s_2 s_1 s_3 s_ 2 s_1,s_2 s_1 s_4\}$ and $A=\cone_\Phi(N(X))$. So $\h A$ is the union of all the segments $[\h\beta,\h\delta]\cap\h \Phi$, with $\beta\in N(X)$ (in Figure~\ref{fig:AffineA3}, $\h N(214)$ is the blue triangle on the bottom face and $\h N(21321)$ is in yellow on the left face). One checks that~$A$ is biconvex. But $A$ is not separable since $\delta\in \conv(\h A)\cap \conv(\h A^c)$. One can also check that $A=\overline{N(X)}$. Therefore this example also shows that the {\em infinite} join of $N(X)$, for $X=\{s_2 s_1 s_3 s_2 s_1,s_2 s_1 s_4\}$, exists in $(\mathcal B,\subseteq)$, is not separable and is not the inversion set (nor the complement) of a finite or infinite reduced word. 

\begin{center}
\begin{figure}
\begin{tikzpicture}

\node at (0,0) {\includegraphics[width=8cm]{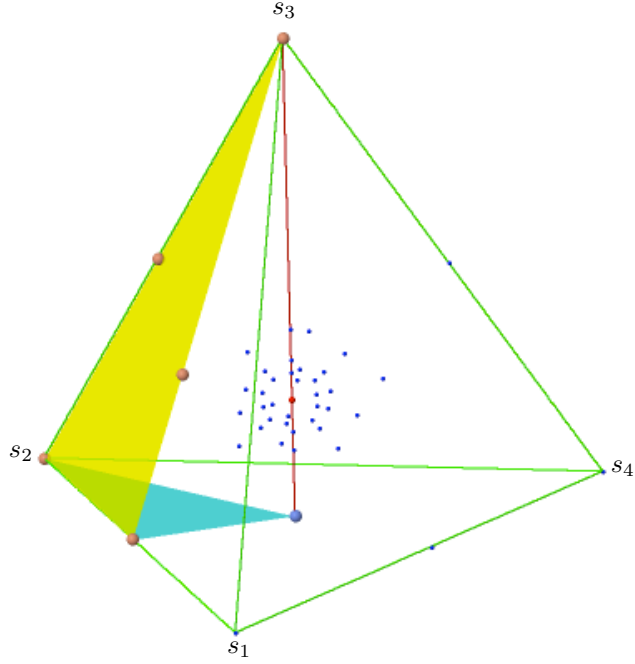}};

\node at (-4,-1.6) {$s_2$};
\node at (-1.1,-4.2) {$s_1$};
\node at (4,-1.8) {$s_4$};
\node at (-0.5,4.3) {$s_3$};
\end{tikzpicture}

\caption{A nonseparable biclosed set in the affine root system of type $\tilde A_3$.\label{fig:AffineA3}}
\end{figure}
\end{center}

\subsection{Infinite biconvex sets in the Kac-Moody/Lie setting} Biclosed sets are often called biconvex, or compatible, in the literature concerned with root systems associated to a Kac-Moody algebra, see for instance~\cite{cellini_structure_1998,ito_classification_2001,ito_parameterizations_2005,baumann_affine_2014,lam_total_2013}. One has to be very careful with these two notions of {\em biconvexity}  since {\em they are not the same in general when applied to infinite sets of positive roots}. Indeed, in the definition of biconvex in the Kac-Moody setting, the complement is not taken in $\Phip$ but in $\Phip\sqcup\Phipim$, where $\Phipim$ denotes the positive imaginary roots. So a subset of $\Phip\sqcup \Phipim$ is said to be biconvex in a Kac-Moody root system if $A$ and $(\Phip\sqcup\Phipim)\setminus A$ are convex (we have to consider imaginary roots as well as real roots). In affine types, it is known that $\Phipim=\mathbb Z_{>0}\delta$,  so $\h{\Phi}^+_{im}=\{\h\delta\}$ is a singleton. Therefore  $\delta$ cannot be in $A$ and its complement. Cellini and Papi~\cite{cellini_structure_1998} proved in the affine case that if $A$ is biconvex then either $A=N(\omega)$ for some $\omega\in \mathcal W^\infty$ if $\delta\notin A$ or $(\Phip\sqcup\Phipim)\setminus A=N(\omega)$ if $\delta\in A$, see also Remark~\ref{rem:CePa}. In particular, this means that the real part of any biconvex set in the sense of the affine Kac-Moody setting is separable in our setting by Proposition~\ref{prop:Infinite}. In  Figure~\ref{fig:AffineA3}, we provide a biconvex set $A$ (in the sense of our definition) that is not separable, so it is not a biconvex set in the sense of a Kac-Moody root system.

\subsection*{Acknowledgment} A significant part of this work was done while the first author (CH) was on sabbatical leave at LIX at \'Ecole Polytechnique, Paris, in September-October 2013 and at Institut de Recherche Math\'ematique Avanc\'ee (IRMA), Universit\'e de Strasbourg, from October 2013 to June 2014. The first author thanks N.~Ressayre to have brought to his attention the reference~\cite{kostant_lie_1961}.  The first author wishes also to warmly thank Matthew Dyer for many fruitful and motivating private communications~\cite{dyer_personal_2013}, to have inspired the example given in~\S\ref{sse:BiclosBiconv} and for the example given in~Figure~\ref{fig:AffineA3}. The second author would like to thank Karim Adiprasito, Hao Chen, Emerson Leon, Vivien Ripoll and G\"unter M. Ziegler for many important discussions on convex geometry throughout the writing of this article. The authors are grateful to two anonymous referees, whose comments helped to significantly improve the content of this article.




\end{document}